\newtheorem{theorem}{Theorem}[section]
\newtheorem{lemma}[theorem]{Lemma}
\newtheorem{claim}[theorem]{Claim}
\newtheorem{proposition}[theorem]{Proposition}
\newtheorem{question}[theorem]{Question}
\theoremstyle{definition}
\newtheorem{definition}[theorem]{Definition}
\newtheorem{remark}[theorem]{Remark}
\numberwithin{equation}{section}
\newcommand{\N}{\mathbb{N}}
\newcommand{\Bd}{\mathrm{Bd}}
\newcommand{\Int}{\mathrm{Int}}
\newcommand{\Cl}{\mathrm{Cl}}
\newcommand{\NB}{\mathcal{NB}}
\newcommand{\Ha}{\mathcal{H}}
\begin{document}
\date{}
\title{Nonblockers for hereditarily decomposable continua with the property of Kelley}       % Enter your title between curly braces
\author{Javier Camargo and Mayra Ferreira\footnote{2010 {\it Mathematics Subject Classification.} 54B20, 54F15.\hfill\break {\it Key word and phrases.} Continuum, Hyperspaces of continua, Nonblockers, Property of Kelley, Hereditarily decomposable continua.}}

\maketitle

\begin{abstract}
     Given a continuum $X$, let $\NB (\mathcal{F}_1(X))$ be the hyperspace of nonblockers of $\mathcal{F}_1(X)$.
    In this paper, we show that if $X$ is hereditarily decomposable with the property of Kelley such that $\NB (\mathcal{F}_1(X))$ is a continuum, then $X$ is a simple closed curve. Thus, we characterize the simple closed curve as the unique hereditarily decomposable continuum with the property of Kelley $X$ such that its hyperspace $\NB (\mathcal{F}_1(X))$ is a continuum.
\end{abstract}

\section{Introduction}

A continuum is a nonempty compact connected metric space. Given a continuum $X$, $2^X$ denotes the hyperspace of all nonempty closed subsets of $X$ topologized with the Hausdorff metric. The set $\mathcal{F}_1(X)$ denotes the family of all one-point subsets of $X$. 

Let $X$ be a continuum and let $B\in 2^X$. We say that $B$ does not block $\mathcal{F}_1(X)$ provided that for any $x\in X\setminus B$, the union of all subcontinua of $X$ containing $x$ and contained in $X\setminus B$ is a dense subset of $X$. The collection of all $B$ in $2^X$ such that $B$ does not block $\mathcal{F}_1(X)$ is denoted by $\NB (\mathcal{F}_1(X))$.

The notion of nonblocker is introduced by A. Illanes and P. Krupski in \cite{Illanes-Krupski}. Since $\NB (\mathcal{F}_1(X))\subseteq 2^X$, $\NB (\mathcal{F}_1(X))$ has the subspace topology and, it is natural to ask, over what conditions is the hyperspace $\NB (\mathcal{F}_1(X))$ a continuum? 

In \cite[Theorem 4.4]{Escobedo-Villanueva}, it is proved that if $X$ is locally connected, then, $\NB (\mathcal{F}_1(X))$ is a continuum if and only if $X$ is a simple closed curve. Also, in \cite{Escobedo-Villanueva}, the authors showed that $\NB (\mathcal{F}_1(X))$ is a continuum when $X$ is the circle of pseudoarcs. In \cite{Camargo-Ortiz}, the simple closed curve is characterized as the unique continuum $X$ such that the hyperspace $\NB (\mathcal{F}_1(X))$ is homeomorphic to $\mathcal{F}_1(X)$. In \cite{Camargo-Maya}, the authors show examples of continua $X$ such that $\NB(\mathcal{F}_1(X))$ is a continuum. Recently, A. Illanes and B. Vejnar showed in \cite{Illanes} that for any completely metrizable and separable space $X$, there exists a continuum $Y$ such that $\NB (\mathcal{F}_1(Y))$ is homeomorphic to $X$. 

%it is given many examples where $\NB (\mathcal{F}_1(X))$ is a continuum, for instance, they showed continua $X$ such that $\NB (\mathcal{F}_1(X))$ is an arc, a two cell or the Hilbert cube. The following is \cite[Question 4.18]{Camargo-Maya}.

%\begin{question}
%For which continuum $Y$ there exists a continuum $X$ such that $\NB (\mathcal{F}_1(X))$ is homeomorphic to $Y$?
%\end{question}

The following question was taken from \cite[Question 4.5]{Escobedo-Villanueva}.

\begin{question}\label{Question1}
For which nonlocally connected continua $X$ is $\NB (\mathcal{F}_1(X))$ a continuum?
\end{question}

We note that in all the examples presented in \cite{Camargo-Maya}, \cite{Escobedo-Villanueva} or \cite{Illanes}, if $X$ is not a simple closed curve and $\NB (\mathcal{F}_1(X))$ is a continuum, then $X$ contain an infinite number of indecomposable continua. Hence, we rise the following question:

\begin{question}\label{Question}
Let $X$ be an hereditarily decomposable continuum such that $\NB (\mathcal{F}_1(X))$ is a continuum. Then, does it follow that $X$ is the simple closed curve? 
\end{question}

The main result in this paper is Theorem~\ref{theo1q2q3q}, where we characterize the simple closed curve as the unique hereditarily decomposable continuum with the property of Kelley $X$, such that the hyperspace of nonblockers $\NB (\mathcal{F}_1(X))$ is a continuum.

The reader can see \cite{Bobok}, \cite{Macias}, or \cite{Piceno} for additional information about nonblockers in hyperspaces.

\section{Preliminaries}

The symbols $\mathbb{N}, \mathbb{R}$ and $\mathbb{C}$ denote the set of positive integers, reals numbers and complex numbers, respectively. In this paper, every map will be a continuous function. Given a metric space $X$ and  $A\subseteq X$, $\Int (A)$, $\Cl(A)$ and $\Bd(A)$ denote the interior, the closure and the boundary of $A$, respectively. For $x\in X$ and $r>0$, we represent $B(x,r)=\{y\in X : d(x,y)<r\}$ where $d$ is a metric on $X$. A \textit{continuum} is a nonempty compact connected metric space. A \textit{subcontinuum} is a continuum contained in some metric space. We say that a continuum $X$ is \textit{irreducible between two of its points} if no proper subcontinuum of $X$ contains both points. A continuum is \textit{irreducible} if it is irreducible between two of its points. A subcontinuum $Z$ of a continuum $X$ is said to be \textit{terminal} if each subcontinuum $Y$ of $X$ that intersects $Z$ satisfies either $Z\subseteq Y$ or $Y\subseteq Z$.
An \textit{arc} is any continuum homeomorphic to $[0,1]$, and a \textit{simple closed curve} is any continuum homeomorphic to $S^1=\{z\in\mathbb{C} : |z|=1\}$.  

A continuum is \textit{decomposable} provided that it is the union of two of its proper subcontinua. A continuum that is not decomposable is said to be \textit{indecomposable}. A continuum is \textit{hereditarily decomposable} if every subcontinuum is decomposable.

Let $X$ be a continuum and let $\mathcal{D}$ be a decomposition of $X$. We say that $\mathcal{D}$ is an \textit{upper semicontinuous decomposition} provided that whenever $D\in\mathcal{D}$, $U$ open subset of $X$, and $D\subseteq U$, there exists an open subset $V$ of $X$ with $D\subseteq V$ such that if $A\in\mathcal{D}$ and $A\cap V\neq\emptyset$, then $A\subseteq U$. We say that $\mathcal{D}$ is \textit{lower semicontinuous} provided that for each $D\in \mathcal{D}$ any two points $x$ and $y$ of $D$ and each open subset $U$ of $X$ such that $x\in U$, there exists an open subset $V$ of $X$ such that $y\in V$ and such that if $A\in \mathcal{D}$ and $A\cap V\neq\emptyset$, then $A\cap U\neq\emptyset$. Finally, we say that $\mathcal{D}$ is \textit{continuous} if $\mathcal{D}$ is both upper and lower semicontinuous. If $\mathcal{D}$ is a decomposition of $X$, we denote $\rho\colon X\to \mathcal{D}$ by the natural function such that $\rho(x)=D_x$ where $D_x$ is unique $D_x\in\mathcal{D}$ with $x\in D_x$. The following result is not difficult to prove.

\begin{proposition}\label{usc}
Let $X$ be a continuum, let $\mathcal{D}$ be a decomposition of $X$ such that $D$ is closed for each $D\in\mathcal{D}$. Then, $\mathcal{D}$ is upper semicontinuous (lower semicontinuous) if and only if for each sequence $(x_n)_{n\in\mathbb{N}}$ in $X$ such that $\lim_{n\to\infty}x_n=x$ for some $x\in X$, we have that $\limsup \rho(x_n)\subseteq \rho(x)$ ($\rho(x)\subseteq \liminf \rho(x_n)$, respectively).
\end{proposition}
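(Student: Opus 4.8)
The plan is to exploit that $X$ is compact metric, so every topological condition here is sequential, and to treat the upper- and lower-semicontinuous equivalences separately, since despite the symmetric-looking statement they require genuinely different arguments. For the upper semicontinuous case, in the forward direction I would suppose $\mathcal{D}$ is upper semicontinuous, take $x_n\to x$ and $y\in\limsup\rho(x_n)$, and aim to show $y\in\rho(x)=D_x$. Arguing by contradiction, if $y\notin D_x$ then, since $D_x$ is compact and $X$ is metric, $r:=d(y,D_x)>0$ and I may set $U=\{z : d(z,D_x)<r/2\}$, an open set containing $D_x$ with $y\notin\Cl(U)$. Upper semicontinuity yields an open $V\supseteq D_x$ such that every $A\in\mathcal{D}$ meeting $V$ lies in $U$; since $x\in V$ and $x_n\to x$, eventually $x_n\in V$, so $\rho(x_n)\subseteq U$ for large $n$, and as $y$ is a subsequential limit of points drawn from these $\rho(x_n)$ we get $y\in\Cl(U)$, a contradiction. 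For the backward direction, assuming the $\limsup$ condition, if $\mathcal{D}$ failed to be upper semicontinuous there would be $D\in\mathcal{D}$ and open $U\supseteq D$ for which, with $V_n=\{z : d(z,D)<1/n\}$, one finds $A_n\in\mathcal{D}$ with $A_n\cap V_n\neq\emptyset$ but $A_n\not\subseteq U$; picking $p_n\in A_n\cap V_n$ and $q_n\in A_n\setminus U$, compactness of $D$ gives (on a subsequence) $p_n\to d\in D$ while compactness of $X\setminus U$ gives $q_n\to q\notin U$. Since $A_n=\rho(p_n)$ and $q_n\in\rho(p_n)$, this yields $q\in\limsup\rho(p_n)\subseteq\rho(d)=D\subseteq U$, contradicting $q\notin U$.

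For the lower semicontinuous case, in the forward direction I would suppose $\mathcal{D}$ is lower semicontinuous, take $x_n\to x$, fix $y\in\rho(x)=D_x$, and produce $y_n\in\rho(x_n)$ with $y_n\to y$. Applying the definition of lower semicontinuity to $D=D_x$ with the ordered pair of points $y$ and $x$ (so the target neighborhoods $B(y,1/m)$ play the role of the open set around the first point), I obtain open $V_m\ni x$ such that any $A\in\mathcal{D}$ meeting $V_m$ also meets $B(y,1/m)$; since $x_n\to x$, for each $m$ all sufficiently large $n$ satisfy $x_n\in V_m$, hence $\rho(x_n)$ meets $B(y,1/m)$. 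A diagonal choice of indices $N_1<N_2<\cdots$ then lets me select $y_n\in\rho(x_n)$ with $d(y_n,y)<1/m$ whenever $N_m\leq n<N_{m+1}$, giving $y_n\to y$ and thus $y\in\liminf\rho(x_n)$. For the backward direction, assuming the $\liminf$ condition, if lower semicontinuity failed there would be $D\in\mathcal{D}$, points $a,b\in D$, and open $U\ni a$ so that for $V_n=B(b,1/n)$ one has $A_n\in\mathcal{D}$ with $A_n\cap V_n\neq\emptyset$ and $A_n\cap U=\emptyset$; choosing $p_n\in A_n\cap V_n$ gives $p_n\to b$, and since $a\in D=\rho(b)\subseteq\liminf\rho(p_n)$ there exist $a_n\in\rho(p_n)=A_n$ with $a_n\to a\in U$, so eventually $a_n\in U$, contradicting $A_n\cap U=\emptyset$.

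I expect the main care to go into the lower-semicontinuous forward direction, where the single lower-semicontinuity hypothesis must be combined across a shrinking family of balls and converted---via the diagonal bookkeeping above---into one sequence $(y_n)$ with $y_n\in\rho(x_n)$ converging to the prescribed point; the other three implications are routine once separation (from the metric, using compactness of the closed decomposition element) and subsequential compactness of $X$ are invoked.
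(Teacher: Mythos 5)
Your proof is correct in all four implications, and since the paper states this proposition without proof (``The following result is not difficult to prove''), your argument --- separating the two elements of $\mathcal{D}$ from a putative limit point via the metric, and using compactness of $X$, closedness of the decomposition elements, and a diagonal selection for the $\liminf$ direction --- is precisely the routine sequential argument the authors intend. No gaps: the care you take with the asymmetric roles of the two points in the lower-semicontinuity definition (the target point sits in $U$, the limit point in $V$) is exactly the one place where a sign error could creep in, and you handle it correctly.
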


Given a map between continua $f\colon X\to Y$, we say that $f$ is \textit{monotone} provided that $f^{-1}(D)$ is a continuum for each subcontinuum $D$ of $Y$. We say that $f$ is \textit{open} provided that $f(U)$ is open for every open subset $U$ of $X$. %We say that $f$ is \textit{atomic} if $f^{-1}(f(A))=A$ for every subcontinuum $A$ of $X$ such that $f(A)$ is nondegenerate.

\begin{definition}\label{defaposyndetic}
Let $X$ be a continuum, and let $p,q\in X$. We say that $X$ is \textit{aposyndetic at $p$ with respect to $q$} provided that there exists a subcontinuum $W$ of $X$ such that $p\in \Int(W)\subseteq W\subseteq X\setminus\{q\}$. Now, $X$ is \textit{aposyndetic at $p$} if $X$ is aposyndetic at $p$ with respect to each point of $X\setminus\{p\}$. We say that $X$ is \textit{aposyndetic} provided that $X$ is aposyndetic at each of its points.
\end{definition}

A continuum $X$ is \textit{locally connected} provided that for every $p\in X$ and each open $V$ such that $p\in V$, there exists an open connected set $U$ such that $p\in U\subseteq V$. We say that $X$ is \textit{semi-locally connected} provided that for every $p\in X$ and each open $V$ such that $p\in V$, there exists an open $U$ such that $p\in U\subseteq V$ and $X\setminus U$ has a finite number of components.

\medskip

Let $X$ be a continuum. Let $2^X=\{A\subseteq X : A\neq\emptyset\text{ and }\Cl (A)=A\}$. It is known that the collection of sets $\langle U_1,U_2,...,U_l\rangle$ form a base on $2^{X}$ (\textit{Vietoris topology}, see \cite[p.3]{Illanes-Nadler}), where $U_1,U_2,...,U_l$ are open sets in $X$ and
$$\langle U_1,U_2,...,U_l\rangle=\{A\in 2^{X}:A\subseteq \cup_{i=1}^{l}U_i\text{  and }A\cap U_i\neq\emptyset \text{ for each } i\}.$$
We denote $\mathcal{C}(X)=\{A\in 2^X : A \text{ is connected}\}$ and $\mathcal{F}_1(X)=\{\{x\} : x\in X\}$. Since $\mathcal{F}_1(X)\subseteq \mathcal{C}(X)\subseteq 2^X$, the sets $\mathcal{F}_1(X)$ and $\mathcal{C}(X)$ are considered as subspaces of $2^X$. Let $\Ha$ be defined for each $A,B\in 2^X$ by $$\Ha(A,B)=\inf\{r>0 : A\subseteq N(B;r) \text{ and }B\subseteq N(A;r)\},$$ where $N(D,s)=\{x\in X : d(x,y)<s \text{ for some }y\in D\}$, for any $D\in 2^X$ and $s>0$. $\Ha$ is a metric on $2^X$ and it is known as the \textit{Hausdorff metric} \cite[Theorem~0.2]{Nadler-Libro2}. It is well known that the topology induced by the Hausdorff metric and the Vietoris topology are the same \cite[Theorem~3.1]{Illanes-Nadler}.

Let $A,B\in 2^X$ such that $A\cap B=\emptyset$. We use the notation $$\kappa_{X\setminus A}(B)=\bigcup \{L\in\mathcal{C}(X) : B\cap L\neq\emptyset\text{ and }L\subseteq X\setminus A\}.$$ If $B=\{b\}$ for some $b\in X$, we write $\kappa_{X\setminus A}(b)$ instead of $\kappa_{X\setminus A}(\{b\})$.

\begin{lemma}\label{lemma0}
Let $X$ be a continuum and let $A,B\in 2^X$ such that $A\cap B=\emptyset$. If $B$ is connected, then there exists a sequence of subcontinua $(K_n)_{n\in\mathbb{N}}$ of $X$ such that $\kappa_{X\setminus A}(B)=\bigcup_{n\in\mathbb{N}}K_n$.
\end{lemma}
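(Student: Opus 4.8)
The plan is to rewrite $\kappa_{X\setminus A}(B)$ as a union taken over subcontinua that \emph{contain} $B$, to organize those subcontinua into countably many compact subfamilies of the hyperspace $\mathcal{C}(X)$, and then to collapse each compact subfamily to a single subcontinuum by taking its union. First I would reduce to continua containing $B$: since $B$ is nonempty, closed and connected, $B\in\mathcal{C}(X)$ and $B\cap A=\emptyset$, so $B$ itself is one of the sets in the defining union; moreover, for any $L\in\mathcal{C}(X)$ with $B\cap L\neq\emptyset$ and $L\subseteq X\setminus A$, the set $B\cup L$ is again a subcontinuum (the union of two intersecting continua), it contains $B$, it is contained in $X\setminus A$, and $L\subseteq B\cup L$. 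Hence
$$\kappa_{X\setminus A}(B)=\bigcup\{M\in\mathcal{C}(X):B\subseteq M\subseteq X\setminus A\}=\bigcup\mathcal{L}',$$
where $\mathcal{L}'=\{M\in\mathcal{C}(X):B\subseteq M,\ M\cap A=\emptyset\}$.

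Next I would analyze $\mathcal{L}'$ inside the compact metric space $\mathcal{C}(X)$. The family $\{M\in\mathcal{C}(X):B\subseteq M\}$ is closed and, because $A$ is closed, $\{M\in\mathcal{C}(X):M\cap A=\emptyset\}$ is open; thus $\mathcal{L}'$ is the intersection of a closed set with an open set. Since an open subset of a compact metric space is $\sigma$-compact (exhaust it by the closed sets of points at distance at least $1/n$ from its complement), I can write $\mathcal{L}'=\bigcup_{n\in\mathbb{N}}\mathcal{M}_n$ with each $\mathcal{M}_n$ compact in $\mathcal{C}(X)$, namely by intersecting the closed part with the closed layers exhausting the open part.

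Finally I would collapse each layer. Fix $b\in B$; every $M\in\mathcal{L}'$ contains $b$, so all members of $\mathcal{M}_n$ pass through the common point $b$. Here I use the standard fact that the union of a Hausdorff-compact family of subcontinua sharing a common point is itself a subcontinuum: compactness of the union follows from extracting convergent subsequences of members and of chosen points, while connectedness follows because any separation of the union would, through the common point, force each (connected) member entirely into one side. Setting $K_n=\bigcup\mathcal{M}_n$ (and $K_n=B$ whenever $\mathcal{M}_n=\emptyset$), each $K_n$ is a subcontinuum of $X$ contained in $\kappa_{X\setminus A}(B)$, and $\bigcup_{n}K_n=\bigcup_{n}\bigcup\mathcal{M}_n=\bigcup\mathcal{L}'=\kappa_{X\setminus A}(B)$, as required.

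The main obstacle to flag is that the tempting shortcut of choosing a countable subfamily of $\mathcal{L}'$ that is merely \emph{dense} in the Hausdorff metric fails: a point of some $M\in\mathcal{L}'$ need not lie in any member of a dense subfamily, since Hausdorff-approximating continua can individually miss that point. The real content is therefore the passage from density to the $\sigma$-compact exhaustion, which ensures each point is captured exactly, together with the union-of-a-compact-family lemma, which is precisely what permits replacing an entire compact slab of continua by one continuum without discarding any point.
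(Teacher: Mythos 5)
Your proof is correct, but it takes a heavier, hyperspace-level route where the paper stays entirely inside $X$. The paper's argument is a three-line version of your exhaustion: set $U_n=N(A,1/n)$ (discarding the finitely many $n$ for which $U_n\cap B\neq\emptyset$, which is possible since the compacta $A$ and $B$ are disjoint) and let $K_n$ be the component of the compact set $X\setminus U_n$ containing $B$ (here connectedness of $B$ guarantees it lies in a single component). Since components of compacta are closed and connected, each $K_n$ is a subcontinuum of $X\setminus A$ meeting $B$; conversely, any $L\in\mathcal{C}(X)$ with $L\cap B\neq\emptyset$ and $L\subseteq X\setminus A$ is at positive distance from $A$, so $L\cup B$ is a connected subset of some $X\setminus U_n$ containing $B$, whence $L\subseteq K_n$. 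Note that your construction produces exactly the same sets: with the natural choice of layers $\mathcal{M}_n=\{M\in\mathcal{C}(X):B\subseteq M,\ M\cap N(A,1/n)=\emptyset\}$, the union $\bigcup\mathcal{M}_n$ is precisely the component of $X\setminus U_n$ containing $B$, because that component is itself a member of $\mathcal{M}_n$ while every member, being connected, containing $B$, and contained in $X\setminus U_n$, lies inside it. So both proofs share the same engine --- a subcontinuum disjoint from $A$ stays at distance at least $1/n$ from $A$ for some $n$ --- and differ only in how each slab is certified to be a continuum: the paper invokes the elementary fact about components of compacta, while you invoke closedness of $\{M:B\subseteq M\}$ and openness of $\langle X\setminus A\rangle$ in $\mathcal{C}(X)$, a $\sigma$-compact exhaustion, and the union-of-a-compact-family-with-a-common-point lemma (the compactness half of which is \cite[Exercise~11.5]{Illanes-Nadler}, used elsewhere in the paper). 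Your approach buys nothing extra here but is sound, and your closing caution about dense subfamilies is well taken; the one simplification to absorb is that each of your compact slabs collapses to a single component, making the hyperspace machinery avoidable.
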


\begin{proof}
Let $U_n=N(A,1/n)$ for each $n\in\mathbb{N}$. Since $A$ and $B$ are compact and $A\cap B=\emptyset$, we may suppose that $U_n\cap B=\emptyset$ for every $n\in\mathbb{N}$. Let $K_n$ be the component of $X\setminus U_n$ such that $B\subseteq K_n$ for each $n\in\mathbb{N}$. It is not difficult to see that $\kappa_{X\setminus A}(B)=\bigcup_{n\in \mathbb{N}}K_n$.
\end{proof}

A subset $W$ of a continuum $X$ is said to be \textit{connected by continua} provided for any two points $x,y\in W$ there exists a subcontinuum $L$ of $W$ such that $\{x,y\}\subseteq L\subseteq W$. Observe that for $A\in 2^X$, $X\setminus A$ is connected by continua if $\kappa_{X\setminus A}(x)=X\setminus A$ for every $x\in X\setminus A$. %The set $\kappa_{X\setminus A}(x)$ is called a \textit{component by continua} of $X\setminus A$.

\begin{definition}
Let $X$ be a continuum and let $A,B\in 2^X$ such that $A\cap B=\emptyset$. We say that \textit{$A$ does not block $B$} provided that $\kappa_{X\setminus A}(B)$ is a dense subset of $X$. In another way we say that \textit{$A$ blocks $B$}; i.e., $A$ blocks $B$ provided that $\Cl(\kappa_{X\setminus A}(B))\neq X$.
\end{definition}
In this paper, we will focus on the following set:
$$\NB(\mathcal{F}_1(X))=\{A\in 2^X : A \text{ does not block }\{x\} \text{ for each }x\in X\setminus A\}.$$

The sets $\mathcal{F}_1(X), \mathcal{C}(X), \NB(\mathcal{F}_1(X))$ and $2^X$ are called \textit{hyperspaces} of $X$. In particular, $\NB(\mathcal{F}_1(X))$ is called \textit{the hyperspace of nonblockers of $\mathcal{F}_1(X)$}. 
In \cite[Theorem 4.4]{Escobedo-Villanueva}, it is proved the following theorem.

\begin{theorem}\label{locallyconnected}
Let $X$ be a locally connected continuum. Then, $\NB(\mathcal{F}_1(X))$ is a continuum if and only if $X$ is a simple closed curve.
\end{theorem}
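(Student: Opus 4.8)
The plan is to first establish, for a locally connected $X$, the explicit description
$$\NB(\mathcal{F}_1(X)) = \{A\in 2^X : \Int(A)=\emptyset \text{ and } X\setminus A \text{ is connected}\}.$$
Since $X$ is locally connected, each component of the open set $X\setminus A$ is open and, being connected and locally connected, is connected by continua; hence for $x\in X\setminus A$ the set $\kappa_{X\setminus A}(x)$ coincides with the component $C_x$ of $X\setminus A$ containing $x$, so $A$ does not block $\{x\}$ exactly when $C_x$ is dense. If $X\setminus A$ is connected and $\Int(A)=\emptyset$, its unique component $X\setminus A$ is dense and the condition holds for every $x$. Conversely, two distinct components are disjoint nonempty open sets and so cannot both be dense; thus density for all $x$ forces $X\setminus A$ to be connected, and density of that single component forces $\Int(A)=\emptyset$. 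In particular, a singleton $\{p\}$ is a nonblocker iff $p$ is a non-cut point of $X$, and a pair $\{p,q\}$ is a nonblocker iff $\{p,q\}$ does not separate $X$.

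For the \emph{easy} direction, the characterization settles $X=S^1$ at once: removing two or more points disconnects $S^1$, so no $A$ with $|A|\geq 2$ is a nonblocker, while $S^1\setminus\{p\}$ is a dense arc, so every singleton is. Therefore $\NB(\mathcal{F}_1(S^1))=\mathcal{F}_1(S^1)$, which is homeomorphic to $S^1$ and hence a continuum.

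For the converse I would use the classical characterization of the simple closed curve as the unique nondegenerate continuum that is separated by each of its pairs of points but by none of its single points. It then suffices to deduce, from the hypothesis that $\NB(\mathcal{F}_1(X))$ is a continuum, the two statements: (i) $X$ has no cut point, and (ii) $X\setminus\{p,q\}$ is disconnected for every pair of distinct points $p,q$. The strategy is to extract (ii) from \emph{compactness} in the form $\NB(\mathcal{F}_1(X))\subseteq\mathcal{F}_1(X)$ (no nonblocker has two or more points, which by the characterization is precisely (ii)), and then to extract (i) from \emph{connectedness}. Indeed, once $\NB(\mathcal{F}_1(X))\subseteq\mathcal{F}_1(X)$, the characterization identifies $\NB(\mathcal{F}_1(X))$, via the homeomorphism $x\mapsto\{x\}$ of $X$ onto $\mathcal{F}_1(X)$, with the set $E$ of non-cut points of $X$; as $\NB(\mathcal{F}_1(X))$ is a nondegenerate continuum, $E$ is a nondegenerate subcontinuum of $X$. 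If $X$ had a cut point $c$, write $X\setminus\{c\}=U\cup V$ as a separation; by the classical existence theorem for non-cut points each of $U$ and $V$ contains a non-cut point of $X$, so $E$ meets both $U$ and $V$, contradicting the connectedness of $E\subseteq U\cup V$. Hence (i) holds and, by the classical characterization, $X$ is a simple closed curve.

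The hard part will be the compactness step: showing that a non-separating set $A$ with $|A|\geq 2$ cannot exist when $\NB(\mathcal{F}_1(X))$ is closed in $2^X$. Here the plan is, given such an $A$ and two distinct points $p,q\in A$, to exploit the arcwise connectivity of the Peano continuum $X$ to slide $q$ along an arc toward a point at which the resulting configuration separates $X$, producing a sequence of nonblockers whose Hausdorff limit is a blocker and so contradicting closedness. Making this limiting construction go through uniformly for an arbitrary locally connected $X$ — not merely a finite graph, so that branch points and the possibility of a degenerate limit must be handled — is the delicate point of the argument and the main obstacle to completing the proof along these lines.
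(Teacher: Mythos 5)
Your reduction machinery is mostly sound: for a Peano continuum the identification $\NB(\mathcal{F}_1(X))=\{A\in 2^X : \Int(A)=\emptyset \text{ and } X\setminus A \text{ is connected}\}$ is correct (components of open sets are open and, by Mazurkiewicz--Moore, arcwise connected, so $\kappa_{X\setminus A}(x)$ is exactly the component of $x$, and two disjoint open sets cannot both be dense), the easy direction for $S^1$ is fine, and once you have $\NB(\mathcal{F}_1(X))\subseteq \mathcal{F}_1(X)$ the passage to the non-cut-point set $E$, the elimination of cut points via the classical theorem that each side of a separation $X\setminus\{c\}=U\cup V$ contains a non-cut point, and the appeal to Moore's two-point-separation characterization of $S^1$ all go through. (For what it is worth, the paper under review does not prove this statement at all; it imports it from Escobedo--L\'opez--Villanueva, Theorem 4.4, so there is no internal proof to match against.)

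The genuine gap is the step you yourself flag as the ``hard part,'' and it is worse than delicate: as formulated it is \emph{false}. You propose to show that a closed set $A$ with $|A|\geq 2$, empty interior, and connected complement cannot exist when $\NB(\mathcal{F}_1(X))$ is merely \emph{closed} in $2^X$. Take $X=[0,1]$: then $\NB(\mathcal{F}_1(X))=\bigl\{\{0\},\{1\},\{0,1\}\bigr\}$, which is a compact (hence closed) subset of $2^X$ containing the doubleton $\{0,1\}$. The same happens for every finite tree, where $\NB(\mathcal{F}_1(X))$ consists of all nonempty sets of endpoints and is compact. In these examples it is \emph{connectedness}, not compactness, of $\NB(\mathcal{F}_1(X))$ that fails, so no ``slide $q$ along an arc and pass to a Hausdorff limit of nonblockers that is a blocker'' argument can ever produce the contradiction you want from closedness alone: in the arc every convergent sequence of nonblockers converges to a nonblocker. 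Consequently your division of labor --- compactness yields statement (ii), connectedness yields statement (i) --- cannot be repaired locally; the exclusion of multi-point nonblockers must itself use connectedness of $\NB(\mathcal{F}_1(X))$ (for instance, by playing the singleton nonblockers guaranteed by the non-cut-point theorem against putative doubletons inside one connected hyperspace), and since you explicitly did not carry out this step, the converse direction of the theorem remains unproved in your proposal.
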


% In \cite[Question 4.5]{Escobedo-Villanueva}, it is raised the following question.

%Let $X$ be a continuum. We say that $X$ has the property of Kelley provided that for any $\epsilon>0$ there exists $\delta>0$ such that if $d(x,y)<\delta$ and $A\in\mathcal{C}(X)$ where $x\in A$, then there exists $B\in \mathcal{C}(X)$ such that $y\in B$ and $H(A,B)<\epsilon$.

Let $X$ be a continuum. We say that $X$ has the \textit{property of Kelley} provided that for any sequence $(x_n)_{n\in\mathbb{N}}$ such that $\lim_{n\to\infty}x_n=x$ for some $x\in X$, and for any $A\in\mathcal{C}(X)$ such that $x\in A$, there exists a sequence on subcontinua $(A_n)_{n\in\mathbb{N}}$ of $X$ such that $x_n\in A_n$ for each $n\in\mathbb{N}$, and $\lim_{n\to\infty}A_n=A$.

The proof of the next result can be found in \cite[Corollary 4.2]{Macias1}.

\begin{theorem}\label{theokelley}
Let $X$ be a continuum, let $A\in 2^X$ and let $K$ be a subcontinuum of $X\setminus A$ such that $\Int (K)\neq\emptyset$. If $X$ has the property of Kelley, then there exists a subcontinuum $L$ of $X$ such that $K\subseteq \Int (L)\subseteq L\subseteq X\setminus A$.
\end{theorem}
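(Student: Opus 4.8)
The plan is to produce $L$ as a slight thickening of $K$ living inside a small tubular neighborhood of $K$. Since $K$ and $A$ are disjoint compacta, $d(K,A)>0$; set $2\delta=d(K,A)$ and note that for every $y$ with $d(y,K)\le\delta$ one has $d(y,A)\ge d(K,A)-\delta=\delta>0$, so that $\Cl(N(K,\delta))\cap A=\emptyset$. Consequently any continuum $L$ with $K\subseteq\Int(L)\subseteq L\subseteq N(K,\delta)$ already satisfies the conclusion, and the problem is reduced to building such an $L$ inside the open neighborhood $N(K,\delta)$.

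The heart of the argument is the following uniform thickening claim: \emph{there is $\eta\in(0,\delta]$ such that for every $y\in X$ with $d(y,K)<\eta$ there exists a subcontinuum $C_y$ of $X$ with $y\in C_y$, $K\subseteq C_y$ and $C_y\subseteq N(K,\delta)$.} I would prove this by contradiction. If it fails, then for each $n$ there is a point $y_n$ with $d(y_n,K)<1/n$ admitting no such continuum. Choosing $k_n\in K$ nearest to $y_n$ and passing to a subsequence with $k_n\to k\in K$, one gets $y_n\to k$. Applying the property of Kelley to the sequence $y_n\to k$ and to the subcontinuum $K$ (which contains $k$), I obtain subcontinua $Y_n$ with $y_n\in Y_n$ and $Y_n\to K$ in the Hausdorff metric. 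The point where $\Int(K)\neq\emptyset$ enters is the following: fix $p\in\Int(K)$ and $r>0$ with $B(p,r)\subseteq K$; once $\Ha(Y_n,K)<\min\{r,\delta\}$ we have $K\subseteq N(Y_n,r)$, so applying this to $p\in K$ yields a point $z_n\in Y_n$ with $d(p,z_n)<r$, hence $z_n\in B(p,r)\cap Y_n\subseteq K\cap Y_n$. Thus $Y_n\cap K\neq\emptyset$, and $C_n:=Y_n\cup K$ is a subcontinuum containing $y_n$, containing $K$, and (since $Y_n\subseteq N(K,\delta)$) contained in $N(K,\delta)$ — contradicting the choice of $y_n$. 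This establishes the claim.

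With the claim in hand, let $V=N(K,\eta/2)$ and set $L_0=\bigcup\{C_y:y\in V\}$. Each $C_y$ contains the common connected set $K$, so $L_0$ is connected; moreover $V\subseteq L_0$ because $y\in C_y$ for every $y\in V$, whence $K\subseteq V\subseteq\Int(L_0)$; and $L_0\subseteq N(K,\delta)$ since each $C_y$ is. Taking $L=\Cl(L_0)$ gives a subcontinuum of $X$ with $K\subseteq\Int(L_0)\subseteq\Int(L)$ and $L\subseteq\Cl(N(K,\delta))\subseteq X\setminus A$, which is exactly the continuum required.

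The step I expect to be the main obstacle — and the only place the hypotheses are really used — is guaranteeing that the Kelley approximants $Y_n$ actually meet $K$: Hausdorff convergence $Y_n\to K$ alone yields only closeness, not intersection, so the union $Y_n\cup K$ need not be connected. It is precisely the assumption $\Int(K)\neq\emptyset$ that converts closeness into genuine intersection (via the interior ball $B(p,r)$), and hence makes the thickening construction go through; without it the whole scheme breaks down.
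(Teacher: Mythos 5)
Your proof is correct. A point of comparison worth noting: the paper does not prove this statement at all --- it imports it verbatim as \cite[Corollary~4.2]{Macias1}, where Mac\'{\i}as's argument runs through the uniform ($\varepsilon$--$\delta$) formulation of the property of Kelley, which is equivalent to the sequential one on a compact metric space. You instead work directly from the sequential definition stated in the paper and manufacture the needed uniformity yourself: your ``uniform thickening claim'' (an $\eta$ such that every point $\eta$-close to $K$ lies in a continuum containing $K$ inside $N(K,\delta)$) is obtained by a sequential-compactness contradiction --- pick bad points $y_n$, project to nearest points $k_n\in K$, pass to a convergent subsequence, and apply Kelley to $y_n\to k$ with the continuum $K$. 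The steps all check out: $\Ha(Y_n,K)<\min\{r,\delta\}$ does force $Y_n$ to meet the ball $B(p,r)\subseteq K$, so $Y_n\cup K$ is a continuum in $N(K,\delta)$ containing $y_n$ and $K$, contradicting the choice of $y_n$; and the final set $L=\Cl\bigl(\bigcup\{C_y: y\in N(K,\eta/2)\}\bigr)$ is a continuum with $K\subseteq N(K,\eta/2)\subseteq\Int(L)$ and $L\subseteq\Cl(N(K,\delta))\subseteq X\setminus A$. You also correctly isolate the one place $\Int(K)\neq\emptyset$ is indispensable: Hausdorff closeness alone does not give $Y_n\cap K\neq\emptyset$, and indeed the theorem fails without the interior hypothesis (e.g., for a degenerate $K$ in an indecomposable Kelley continuum). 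The trade-off is that your argument is self-contained and uses only the definition as stated in the paper, at the cost of the extra compactness detour that the citation to the uniform Kelley property would have let you skip.
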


\section{On minimal nonblockers}

In \cite{Piceno}, it is introduced the function $\pi\colon X\to \mathcal{C}(X)$ defined by

\begin{equation}\label{defminimal}
 \pi(x)=    
    \begin{cases}
    \bigcap \{A\in\NB(\mathcal{F}_1(X)) : x\in A\}, & \hbox{ if } x\in \bigcup\mathcal{NB}(\mathcal{F}_1(X)); \\
    X, &\hbox{ if } x\notin \bigcup\mathcal{NB}(\mathcal{F}_1(X)).
    \end{cases}   
\end{equation}

By Proposition~3.1 and Theorem~3.3 of \cite{Piceno}, $\pi$ is well defined and $\pi(x)\in \NB(\mathcal{F}_1(X))$ for each $x\in X$ where $x\in \bigcup\mathcal{NB}(\mathcal{F}_1(X))$. We prove in Theorem~\ref{partition} that $\pi$ is a map whenever $X$ is a decomposable continuum with the property of Kelley such that $\NB(\mathcal{F}_1(X))$ is a continuum, giving a partial answer to Question~3.6 of \cite{Piceno}. Furthermore, we show in Theorem~\ref{theopartition} that $\{\pi(x) : x\in X\}$ is a continuous decomposition of $X$.

\begin{lemma}\label{lemdyfh6y}
Let $X$ be a decomposable continuum with the property of Kelley. If $L$ is a proper subcontinuum of $X$, then there exists $A\in\NB(\mathcal{F}_1(X))$ such that $A\subseteq X\setminus L$.
\end{lemma}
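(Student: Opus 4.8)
The plan is to produce the nonblocker as a single point: I will look for a point $a\in X\setminus L$ such that $\{a\}\in\NB(\mathcal{F}_1(X))$, which immediately yields $A=\{a\}\subseteq X\setminus L$. Recall that $\{a\}$ is a nonblocker exactly when $\kappa_{X\setminus\{a\}}(x)$ is dense in $X$ for every $x\neq a$. Since a nondegenerate continuum has no isolated points, $X\setminus\{a\}$ is automatically dense, so it suffices to arrange that $\kappa_{X\setminus\{a\}}(x)=X\setminus\{a\}$ for each such $x$; that is, that $X\setminus\{a\}$ is connected by continua. Thus the whole problem reduces to locating a single point $a$, lying off $L$, whose removal leaves $X$ densely connected by continua. (The examples of the arc, the triod, the circle and the Cantor fan all confirm that singletons are the natural nonblockers, while proper subcontinua typically block.)

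To build the required connectivity I would lean on decomposability together with Theorem~\ref{theokelley}. Writing $X=P\cup Q$ with $P,Q$ proper subcontinua, both $\Int(P)$ and $\Int(Q)$ are nonempty, so $X$ carries proper subcontinua of nonempty interior. If $a$ can be chosen outside one piece, say $a\notin P$, then $P$ has nonempty interior and avoids $a$, so Theorem~\ref{theokelley} lets me thicken it: there is a subcontinuum $M_1$ with $P\subseteq\Int(M_1)\subseteq M_1\subseteq X\setminus\{a\}$. As $M_1$ again has nonempty interior, iterating gives an increasing sequence $P\subseteq M_1\subseteq M_2\subseteq\cdots$ of subcontinua of $X\setminus\{a\}$ with $M_n\subseteq\Int(M_{n+1})$. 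The union $M=\bigcup_{n}M_n$ is connected and avoids $a$, and $\Cl(M)$ is a subcontinuum of $X$ (this is consistent with the description of $\kappa_{X\setminus\{a\}}(P)$ as a countable union of continua in Lemma~\ref{lemma0}). For any $x$ already caught in some $M_k$ one has $M_n\subseteq\kappa_{X\setminus\{a\}}(x)$ for all $n\geq k$, hence $M\subseteq\kappa_{X\setminus\{a\}}(x)$; so once $\Cl(M)=X$, the set $\kappa_{X\setminus\{a\}}(x)$ is dense for every $x\in M$. The point of the construction is therefore to force the thickening to \emph{saturate}, i.e. $\Cl(M)=X$.

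The main obstacle is exactly the selection of $a$, and it is twofold. First, I must place $a$ in the open set $X\setminus L$ while keeping a starting continuum of nonempty interior disjoint from $a$; this fails only in the borderline situation $X\setminus L\subseteq P\cap Q$, which I would circumvent by re-choosing the decomposition adapted to $L$ or by exhibiting another proper subcontinuum of nonempty interior that avoids the candidate point. Second, and more seriously, I must guarantee that the thickening saturates ($\Cl(M)=X$) instead of stabilizing at a proper subcontinuum permanently \emph{shielded} by $a$, and I must then route the remaining points $x\notin M$ to $M$ through continua avoiding $a$. This is precisely the requirement that $a$ be a non-block point of $X$, and it is where both hypotheses are indispensable: the property of Kelley, via Theorem~\ref{theokelley}, is what makes continua of nonempty interior spread, while decomposability is what supplies such continua off any single point. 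The model to keep in mind is the arc, whose only admissible points are the two endpoints and where no proper subcontinuum can contain both, so one endpoint always survives in $X\setminus L$. Accordingly, I would attempt to prove the key structural fact that the non-block points of a decomposable continuum with the property of Kelley cannot all lie in a proper subcontinuum: starting from a Moore-type non-cut point and using the property of Kelley to upgrade ``$X\setminus\{a\}$ connected'' to ``$X\setminus\{a\}$ densely connected by continua,'' and then using that $X\setminus L$ is open and nonempty to locate such an $a$ off $L$. Establishing this upgrade, and confirming that a suitable $a$ genuinely avoids $L$, is the delicate core of the argument.
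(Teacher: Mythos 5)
Your reduction of the lemma to finding a \emph{singleton} nonblocker $\{a\}\subseteq X\setminus L$ is the fatal step: under exactly the hypotheses of the lemma there need not exist any point $a$ with $\{a\}\in\NB(\mathcal{F}_1(X))$. The circle of pseudo-arcs (mentioned in the introduction) is homogeneous, hence has the property of Kelley, and is decomposable; but every point $a$ lies in a terminal, nowhere dense pseudo-arc layer $P_a$, so for $x\in P_a\setminus\{a\}$ every subcontinuum containing $x$ and missing $a$ satisfies the terminality dichotomy and must lie inside $P_a$, whence $\kappa_{X\setminus\{a\}}(x)\subseteq P_a$ is not dense. Thus no singleton is a nonblocker in that continuum, and the ``key structural fact'' you defer to (that non-block points cannot all lie in a proper subcontinuum) is unattainable there: the set of non-block points is empty. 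This also explains why the two steps you yourself flag as the delicate core --- forcing the thickening to saturate ($\Cl(M)=X$) and locating $a$ off $L$ --- cannot be carried out in general: they are not merely technical gaps but obstructed by an example within the hypotheses.

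The paper's proof turns your thickening idea around so that no point is fixed in advance. When $\Int(L)=\emptyset$ it first enlarges $L$ to $L_1=M\cup L$ (with $M$ a piece of a decomposition $X=M\cup N$, so $\Int(L_1)\neq\emptyset$ and $L_1\neq X$); when $\Int(L)\neq\emptyset$ it takes $L_1=L$. It then alternates Theorem~\ref{theokelley} with order arcs in $\mathcal{C}(X)$ \cite[Theorem~14.6]{Illanes-Nadler} to build an increasing sequence of \emph{proper} subcontinua with $L_n\subseteq\Int(L_{n+1})$ and $\Ha(L_n,X)<\frac{1}{n}$; the order-arc step is what you were missing, since it lets one stay proper while forcing the union to be dense. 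The nonblocker is the residual compact set $A=\bigcap_{n\in\mathbb{N}}(X\setminus\Int(L_n))$, which is in general neither a point nor even connected: its complement is $X\setminus A=\bigcup_{n\in\mathbb{N}}L_n$, a dense increasing union of continua, and any $x\in X\setminus A$ lies in some $L_k$, so $\kappa_{X\setminus A}(x)\supseteq\bigcup_{n\geq k}L_n$ is automatically dense. In short, the paper does not choose $A$ and then prove density of the $\kappa$-sets; it engineers a dense exhaustion disjoint from nothing but $A$ and reads the nonblocker off as what is left over. Your proposal cannot be repaired without this change of viewpoint, i.e., without abandoning the singleton ansatz.
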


\begin{proof}
Let $L$ be a proper subcontinuum of $X$ and let $M$ and $N$ be proper subcontinua of $X$ such that $X=M\cup N$. We consider two cases:

\textbf{1.} $\Int(L)=\emptyset$. Note that either $M\cap L\neq\emptyset$ or $N\cap L\neq \emptyset$. Suppose that $M\cap L\neq\emptyset$. Since $X\setminus M$ is a nonempty open subset of $X$ and $X\setminus M\nsubseteq L$, we have that $M\cup L\neq X$. Let $L_1=M\cup L$. Since $\Int(M)\neq\emptyset$, $\Int(L_1)\neq\emptyset$. By Theorem~\ref{theokelley}, there exists a proper subcontinuum $R$ of $X$ such that $L_1\subseteq \Int(R)$. By \cite[Theorem~14.6]{Illanes-Nadler}, there exists an order arc in $\mathcal{C}(X)$ from $R$ to $X$. Let $\alpha$ be an order arc from $R$ to $X$ and let $f\colon [0,1]\to \mathcal{C}(X)$ be an embedding such that $f(0)=R, f(1)=X$ and $f([0,1])=\alpha$. Since $f$ is a map, there exists $t\in (0,1)$ such that $\Ha(f(t),X)<\frac{1}{2}$. Let $L_2=f(t)$. Note that $L_1\subseteq \Int(L_2)$. Hence inductively, we may construct an increasing sequence of continua $(L_n)_{n\in\mathbb{N}}$ such that $L_n\subseteq \Int(L_{n+1})$, $L_n\neq X$ and $\Ha(L_n,X)<\frac{1}{n}$, for each $n\in\mathbb{N}$. Let $A=\bigcap_{n\in\mathbb{N}}(X\setminus \Int(L_n))$. Since $\Int(L_n)\subseteq \Int(L_{n+1})$, $X\setminus \Int(L_{n+1})\subseteq X\setminus \Int(L_{n})$, for each $n\in\mathbb{N}$. Furthermore, $X\setminus \Int(L_n)\neq\emptyset$ for each $n\in\mathbb{N}$. Thus, $A$ is a nonempty compact subset of $X$. Observe that $X\setminus A=\bigcup_{n\in\mathbb{N}}\Int(L_n)$ and, since $\Int(L_n)\subseteq L_{n+1}$, $X\setminus A=\bigcup_{n\in\mathbb{N}}L_n$. Also, $X\setminus A$ is dense, because $\Ha(L_n,X)<\frac{1}{n}$ for each $n\in\mathbb{N}$. Therefore, $A\in\NB(\mathcal{F}_1(X))$ and $A\subseteq X\setminus L$. Similarly, we construct $A$ if $N\cap L\neq\emptyset$.

\textbf{2.} $\Int(L)\neq\emptyset$. We define $L_1=L$ and continue the same argument as in case \textbf{1} to construct $A$ and complete the proof. 
\end{proof}

With the next theorem, we have that $\pi(x)\neq X$ for each $x\in X$ when $X$ is a decomposable continuum with the property of Kelley such that $\NB(\mathcal{F}_1(X))$ is a continuum.

\begin{theorem}\label{theo98iw}
Let $X$ be a decomposable continuum with the property of Kelley. If $\NB(\mathcal{F}_1(X))$ is a continuum, then $X=\bigcup\NB(\mathcal{F}_1(X))$.
\end{theorem}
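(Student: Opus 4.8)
The plan is to prove the equivalent statement that $W := \bigcup \NB(\mathcal{F}_1(X))$ equals $X$ by ruling out the possibility that $W$ is a proper closed subset. Two soft observations come first. Since $\NB(\mathcal{F}_1(X))$ is a continuum it is compact, and the union of a compact subfamily of $2^X$ is closed in $X$: if $x_n \in A_n \in \NB(\mathcal{F}_1(X))$ with $x_n \to x$, then passing to a convergent subsequence $A_{n_k} \to A \in \NB(\mathcal{F}_1(X))$ forces $x \in A \subseteq W$, so $W$ is closed; and $W \neq \emptyset$ by Lemma~\ref{lemdyfh6y}. The skeleton of the argument is then a clean reduction to connectedness: \emph{if} $W$ were connected and $W \neq X$, then $W$ would be a proper subcontinuum of $X$, so Lemma~\ref{lemdyfh6y} would produce some $A \in \NB(\mathcal{F}_1(X))$ with $A \subseteq X \setminus W$; but every member of $\NB(\mathcal{F}_1(X))$ is contained in $W = \bigcup \NB(\mathcal{F}_1(X))$, forcing $A \subseteq W \cap (X \setminus W) = \emptyset$, which is impossible. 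Thus the entire theorem reduces to showing that $W$ is connected, and $W = X$ follows.

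To attack connectedness I would argue by contradiction, assuming $W = P \sqcup Q$ is a separation into disjoint nonempty closed sets, and bring in the hypothesis that $\NB(\mathcal{F}_1(X))$ is a continuum. Set $\mathcal{P} = \{A \in \NB(\mathcal{F}_1(X)) : A \cap Q = \emptyset\}$ and $\mathcal{Q} = \{A \in \NB(\mathcal{F}_1(X)) : A \cap P = \emptyset\}$; since $P$ and $Q$ are closed and disjoint, both $\mathcal{P}$ and $\mathcal{Q}$ are open in $\NB(\mathcal{F}_1(X))$ and disjoint. If no nonblocker met both $P$ and $Q$, then $\NB(\mathcal{F}_1(X)) = \mathcal{P} \sqcup \mathcal{Q}$; as each point of $P$ and each point of $Q$ lies in some nonblocker (they lie in $W$), both $\mathcal{P}$ and $\mathcal{Q}$ would be nonempty, contradicting connectedness of $\NB(\mathcal{F}_1(X))$. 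Hence the whole difficulty collapses to the existence of a \emph{straddling} nonblocker $A$ meeting both $P$ and $Q$, together with the fact (which I would also use) that $V := X \setminus W$ is nonempty open under the standing assumption $W \neq X$.

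The hard part — and the step where property of Kelley and decomposability must both enter — is converting this situation into an outright contradiction, namely producing a nonblocker that meets $V$, which is impossible since $V \cap \bigcup \NB(\mathcal{F}_1(X)) = \emptyset$. The tools I would deploy are: for a point $v \in V$ and the straddling nonblocker $A$ (so $v \notin A$), the density of $\kappa_{X \setminus A}(v)$, which by Lemma~\ref{lemma0} is an increasing union of subcontinua $K_n \ni v$ with $K_n \subseteq X \setminus A$ and $\Ha(K_n, X) \to 0$; and Theorem~\ref{theokelley}, which (once some $K_n$ has nonempty interior) thickens it to a subcontinuum $L$ with $K_n \subseteq \Int(L) \subseteq L \subseteq X \setminus A$, allowing the increasing-interior construction of Lemma~\ref{lemdyfh6y} to manufacture arbitrarily dense subcontinua avoiding a prescribed closed set. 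The obstacle I expect to be decisive is precisely the dual of this construction: to place $v$ (or any prescribed point) \emph{inside} a nonblocker one needs arbitrarily dense subcontinua \emph{avoiding} $v$, and producing these is exactly what forces connectedness of $\NB(\mathcal{F}_1(X))$ into the picture. The guiding example is the arc: it is decomposable with the property of Kelley, yet every sufficiently dense subcontinuum must contain each interior point, so $\NB(\mathcal{F}_1(\text{arc}))$ is a three-point (disconnected) set and $W$ consists only of the endpoints; this shows the hypothesis that $\NB(\mathcal{F}_1(X))$ is a continuum is indispensable and pinpoints where any proof must spend its effort.
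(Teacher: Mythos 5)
There is a genuine gap, and it sits at the only non-routine point of the argument. Your outer skeleton coincides with the paper's proof of Theorem~\ref{theo98iw}: set $W=\bigcup\NB(\mathcal{F}_1(X))$, observe that $W$ is closed (your compactness argument is fine), and note that if $W$ were a proper \emph{subcontinuum} of $X$, then Lemma~\ref{lemdyfh6y} would give $A\in\NB(\mathcal{F}_1(X))$ with $A\subseteq X\setminus W$, which is absurd since every nonblocker lies in $W$. Everything therefore hinges on the connectedness of $W$, and you never prove it. Your separation argument only establishes that if $W=P\sqcup Q$, then some nonblocker $A$ must meet both $P$ and $Q$; as you half-acknowledge, this is no contradiction, because elements of $\NB(\mathcal{F}_1(X))$ need not be connected, so a straddling $A$ can sit happily inside $P\cup Q$. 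The program you then announce --- using Lemma~\ref{lemma0} and Theorem~\ref{theokelley} to manufacture a nonblocker meeting $V=X\setminus W$ --- is stated as an expectation, with the decisive step explicitly flagged as an unresolved ``obstacle''; no construction is carried out. As written, the proposal proves closedness of $W$ and the easy reduction, but not the theorem.

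The paper closes exactly this gap with one standard hyperspace fact that your proposal misses: if $\mathcal{A}$ is a subcontinuum of $2^X$ and $\mathcal{A}\cap\mathcal{C}(X)\neq\emptyset$, then $\bigcup\mathcal{A}$ is a subcontinuum of $X$ (\cite[Exercise~15.9(2)]{Illanes-Nadler}), and the needed hypothesis $\NB(\mathcal{F}_1(X))\cap\mathcal{C}(X)\neq\emptyset$ is supplied by \cite[Theorem~3.3]{Piceno}. The side condition is not cosmetic: in $X=[0,1]$ the family $\{\{0,t\} : t\in[1/2,1]\}$ is an arc in $2^X$ whose union $\{0\}\cup[1/2,1]$ is disconnected, so connectedness of $\NB(\mathcal{F}_1(X))$ alone does not transfer to $W$ by soft topology --- one genuinely needs a connected member of the family, which nothing in your toolkit produces (even the nonblocker built in Lemma~\ref{lemdyfh6y}, namely $\bigcap_{n\in\mathbb{N}}(X\setminus\Int(L_n))$, can be disconnected). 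Your arc example is correct and nicely confirms that the hypothesis that $\NB(\mathcal{F}_1(X))$ be a continuum is indispensable, but it is a sanity check, not a substitute for the missing connectedness step.
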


\begin{proof}
By \cite[Theorem~3.3]{Piceno}, $\NB(\mathcal{F}_1(X))\cap \mathcal{C}(X)\neq\emptyset$. Since $\NB(\mathcal{F}_1(X))$ is a continuum, $\bigcup\NB(\mathcal{F}_1(X))$ is a subcontinuum of $\mathcal{C}(X)$, by \cite[Exercise~15.9 (2)]{Illanes-Nadler}. Suppose that $\bigcup\NB(\mathcal{F}_1(X))$ is a proper subcontinuum of $\mathcal{C}(X)$. Thus, there exists $A\in\NB(\mathcal{F}_1(X))$ such that $A\subseteq X\setminus \bigcup\NB(\mathcal{F}_1(X))$, by Lemma~\ref{lemdyfh6y}. A contradiction. Therefore, $X=\bigcup\NB(\mathcal{F}_1(X))$.
\end{proof}

In this section we use the following notation: $$B(x)=\{y\in X : x \text{ blocks }y\}=\{y\in X : \Cl(\kappa_{X\setminus\{x\}}(y))\neq X\}, \text{ for }x\in X.$$ Observe that $X\setminus \pi(x)\subseteq X\setminus B(x)$ for each $x\in X$. Thus, $B(x)\subseteq \pi(x)$ for each $x\in X$. The following remark is not difficult to prove.

\begin{remark}\label{rem6676}
Let $X$ be a continuum and let $x\in X$. If $L\in\mathcal{C}(X)$ is such that $L\cap B(x)\neq\emptyset$ and $L\cap (X\setminus B(x))\neq\emptyset$, then $x\in L$.
\end{remark}

\begin{proposition}\label{blockpi}
Let $X$ be a decomposable continuum such that $\NB(\mathcal{F}_1(X))$ is a continuum. If $X$ has the property of Kelley, then  $\pi(x)=B(x)$ for each $x\in X$.
\end{proposition}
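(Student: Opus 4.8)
The plan is to prove the nontrivial inclusion $\pi(x)\subseteq B(x)$ (the reverse inclusion $B(x)\subseteq\pi(x)$ is already recorded above) by showing that $B(x)$ is \emph{itself} a nonblocker that contains $x$. Once this is done, the minimality of $\pi(x)$ among the nonblockers containing $x$ (recall $\pi(x)=\bigcap\{A\in\NB(\mathcal{F}_1(X)):x\in A\}$, which is meaningful for every $x$ since $X=\bigcup\NB(\mathcal{F}_1(X))$ by Theorem~\ref{theo98iw}) forces $\pi(x)\subseteq B(x)$, and the proof is complete. Here $x\in B(x)$ holds trivially, because no subcontinuum can contain $x$ while avoiding $x$, so $\kappa_{X\setminus\{x\}}(x)=\emptyset$ and hence $\Cl(\kappa_{X\setminus\{x\}}(x))\neq X$.

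First I would verify that $B(x)$ satisfies the nonblocking condition at every point of its complement. Fix $z\in X\setminus B(x)$; then $x$ does not block $z$, i.e.\ $\kappa_{X\setminus\{x\}}(z)$ is dense. The key observation is that, since $\{x\}\subseteq B(x)$, every subcontinuum $L$ with $z\in L\subseteq X\setminus\{x\}$ must in fact satisfy $L\subseteq X\setminus B(x)$: otherwise $L$ would meet both $B(x)$ and $X\setminus B(x)$, and Remark~\ref{rem6676} would force $x\in L$, a contradiction. Consequently $\kappa_{X\setminus\{x\}}(z)=\kappa_{X\setminus B(x)}(z)$, so $\kappa_{X\setminus B(x)}(z)$ is dense; that is, $B(x)$ does not block $z$. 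As $z$ was arbitrary, $B(x)$ does not block any point of $X\setminus B(x)$.

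The main obstacle is to show that $B(x)$ is closed, so that $B(x)\in 2^X$ and the previous paragraph upgrades to $B(x)\in\NB(\mathcal{F}_1(X))$; this is exactly where the property of Kelley and the fact $\pi(x)\neq X$ (obtained from Theorem~\ref{theo98iw}) enter. I would argue by contradiction: suppose $p\in\Cl(B(x))\setminus B(x)$ and choose $b_k\in B(x)$ with $b_k\to p$. Since $p\notin B(x)$, the set $\kappa_{X\setminus\{x\}}(p)$ is dense, and by Lemma~\ref{lemma0} I may write it as $\bigcup_{n}K_n$ with each $K_n$ a subcontinuum containing $p$ and satisfying $d(x,K_n)>0$. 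Fixing $n$ and applying the property of Kelley to $b_k\to p$ and $K_n\ni p$, I obtain subcontinua $K_n^{(k)}\ni b_k$ with $K_n^{(k)}\to K_n$; for large $k$ these lie in $X\setminus\{x\}$, and since $b_k\in B(x)\cap K_n^{(k)}$, Remark~\ref{rem6676} again forbids $K_n^{(k)}$ from meeting $X\setminus B(x)$, whence $K_n^{(k)}\subseteq B(x)$. Passing to the limit gives $K_n\subseteq\Cl(B(x))$ for every $n$, so $\bigcup_n K_n\subseteq\Cl(B(x))$; as $\bigcup_n K_n$ is dense, this forces $\Cl(B(x))=X$. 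But $B(x)\subseteq\pi(x)$ with $\pi(x)$ closed gives $\Cl(B(x))\subseteq\pi(x)\neq X$, a contradiction. Hence $B(x)$ is closed.

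Combining the two steps, $B(x)\in\NB(\mathcal{F}_1(X))$ with $x\in B(x)$, so the minimality of $\pi(x)$ yields $\pi(x)\subseteq B(x)$, and together with $B(x)\subseteq\pi(x)$ this gives $\pi(x)=B(x)$. The delicate point throughout is the closedness of $B(x)$: the nonblocking property of $B(x)$ follows fairly directly from Remark~\ref{rem6676}, but closedness genuinely needs both the property of Kelley (to transport the dense family of continua through $p$ to continua through the approximating points $b_k$) and the global input $\pi(x)\neq X$ coming from the hypothesis that $\NB(\mathcal{F}_1(X))$ is a continuum.
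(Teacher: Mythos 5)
Your proposal is correct and takes essentially the same approach as the paper: both reduce the nontrivial inclusion to showing $B(x)$ is closed (so that $B(x)\in\NB(\mathcal{F}_1(X))$ and the minimality in the definition of $\pi(x)$ forces $\pi(x)\subseteq B(x)$), and both establish closedness by combining the property of Kelley with Remark~\ref{rem6676} and the fact $\pi(x)\neq X$ obtained from Theorem~\ref{theo98iw}. The only cosmetic difference is in extracting the contradiction: the paper transports a single continuum through the limit point into a fixed open set disjoint from $\Cl(B(x))$, whereas you transport the whole dense family $\bigcup_n K_n$ from Lemma~\ref{lemma0} to conclude $\Cl(B(x))=X$; both are the same Kelley-plus-Remark mechanism.
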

\begin{proof}
Let $x\in X$. Note that there exists $A\in\NB(\mathcal{F}_1(X))$ such that $x\in A$, by Theorem~\ref{theo98iw}. Thus, $\pi(x)\neq X$. Since $B(x)\subseteq \pi(x)$, we need to show that $\pi(x)\subseteq B(x)$. Observe that if $B(x)$ is closed, then $B(x)\in \NB(\mathcal{F}_1(X))$, and hence, $B(x)\in \{A\in\NB(\mathcal{F}_1(X)) : x\in A\}$ and $\pi(x)\subseteq B(x)$ (see (\ref{defminimal})). 

We see that $B(x)$ is closed. Let $(w_n)_{n\in\mathbb{N}}$ be a sequence in $B(x)$ such that $\lim_{n\to\infty}w_n=w$ for some $w\in X$. Suppose that $w\in \Cl(B(x))\setminus B(x)$. Since $B(x)\subseteq \pi(x)$, $\pi(x)$ is closed and $\Int(\pi(x))=\emptyset$, we have that there exists a nonempty open subset $U$ of $X$ such that $U\cap \Cl(B(x))=\emptyset$.
Furthermore, there exists $L\in\mathcal{C}(X)$ such that $w\in L$, $x\notin L$ and $L\cap U\neq\emptyset$, because $w\notin B(x)$. Since $X$ has the property of Kelley, there exists a sequence $(L_n)_{n\in\N}$ of $\mathcal{C}(X)$ such that $w_n\in L_n$ for each $n\in\N$, and $\lim_{n\to\infty}L_n=L$. Note that $L\in \langle X\setminus\{x\},U\rangle$. Thus, there exists a positive number $k\in\mathbb{N}$ such that $L_i\in \langle X\setminus\{x\},U\rangle$ for each $i\geq k$. Let $k_0>k$. Note that $L_{k_0}\cap B(x)\neq\emptyset$, $L_{k_0}\cap U\neq\emptyset$ and $x\notin L_{k_0}$. This contradicts Remark~\ref{rem6676}. Therefore, $B(x)$ is closed and $\pi(x)=B(x)$.
\end{proof} 

\begin{question}
    Let $X$ be a decomposable continuum. If $X$ has the property of Kelley, then does it follow that $B(x)$ is closed for each $x\in X$?
\end{question}

It is easy to verify that $B(x)$ is one composant of $X$, whenever $X$ is an hereditarily indecomposable continuum. Thus, if $X$ is hereditarily indecomposable, then $X$ has the property of Kelley and $B(x)$ is not closed (see \cite[20.6, p.168]{Illanes-Nadler}).

%The proof of the next lemma follows from definition.

%\begin{lemma}\label{lem6dtu}
%Let $X$ be a continuum and let $x\in X$. If $A\in \mathcal{C}(X)$ and $A\cap B(x)\neq\emptyset$, then either $A \subseteq B(x)$ or $x \in A$. 
%\end{lemma}

\begin{theorem}\label{partition}
Let $X$ be a decomposable continuum with the property of Kelley. If $\NB(\mathcal{F}_1(X))$ is a continuum, then $\pi$ is a map.
\end{theorem}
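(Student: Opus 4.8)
The plan is to exploit that $\mathcal{C}(X)$ is a compact metric space, so a single-valued function into it is continuous as soon as its graph is closed; thus it suffices to show that whenever $x_n\to x$ in $X$ and $\pi(x_n)\to K$ in $\mathcal{C}(X)$, one has $K=\pi(x)$. Throughout I would use the identification $\pi(x)=B(x)$ from Proposition~\ref{blockpi}, together with the fact that each $B(x)$ is a \emph{proper} subcontinuum containing $x$ (properness coming from Theorem~\ref{theo98iw}, which gives $\pi(x)\neq X$).

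First I would settle the inclusion $\pi(x)\subseteq K$. Since $\NB(\mathcal{F}_1(X))$ is a continuum it is closed in $2^X$, so the Hausdorff limit $K$ of the nonblockers $\pi(x_n)$ is itself a nonblocker. As $x_n\in\pi(x_n)$ with $x_n\to x$ and $\pi(x_n)\to K$, the point $x$ lies in $K$; because $\pi(x)$ is by definition the intersection of all nonblockers containing $x$, and $K$ is such a nonblocker, this forces $\pi(x)\subseteq K$.

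The substantial half is $K\subseteq B(x)$, which I would approach by contradiction: assume there is $y\in K\setminus B(x)$ and pick $y_n\in\pi(x_n)=B(x_n)$ with $y_n\to y$. Two facts drive the argument. Since $y\notin B(x)$, the set $\kappa_{X\setminus\{x\}}(y)$ is dense; by Lemma~\ref{lemma0} it is an increasing union of subcontinua through $y$ missing $x$, so for every $\varepsilon>0$ there is a subcontinuum $L$ with $y\in L$, $x\notin L$ and $L$ $\varepsilon$-dense in $X$. Conversely, since $y_n\in B(x_n)$, Remark~\ref{rem6676} shows that any subcontinuum through $y_n$ avoiding $x_n$ must lie inside $B(x_n)$ (it cannot meet both $B(x_n)$ and its complement without containing $x_n$). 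The property of Kelley bridges the two: applied to $y_n\to y$ and $L\ni y$ it produces subcontinua $L_n\ni y_n$ with $L_n\to L$, and since $x\notin L$ while $x_n\to x$ one checks that $x_n\notin L_n$ for large $n$; hence $L_n\subseteq B(x_n)$ and $L_n$ is eventually almost as dense as $L$. Diagonalising over $\varepsilon\to 0$ then shows that $B(x_n)$ becomes arbitrarily Hausdorff-dense, i.e. $K=X$.

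The remaining step, excluding $K=X$, is the one I expect to be the main obstacle, because blocking is insensitive to Hausdorff density: a proper subcontinuum with empty interior can be $\varepsilon$-dense, and $X$ itself is a nonblocker, so density alone yields no contradiction. The route I would pursue is to control $\pi(x_n)$ \emph{from above} using minimality, i.e. to construct nonblockers $A_n$ with $x_n\in A_n$ and $\limsup A_n\subseteq B(x)$; then $\pi(x_n)\subseteq A_n$ forces $K=\lim\pi(x_n)\subseteq B(x)$, contradicting $K=X$. To build the $A_n$ I would apply the property of Kelley to the proper subcontinuum $B(x)\ni x$ and $x_n\to x$, obtaining subcontinua $C_n\ni x_n$ with $C_n\to B(x)$, fix an open set $W$ with $\Cl(W)\cap B(x)=\emptyset$ (available since $B(x)\neq X$) so that $C_n\cap W=\emptyset$ eventually, and then combine Remark~\ref{rem6676} with the nonblocker-manufacturing technique of Lemma~\ref{lemdyfh6y} to upgrade these $C_n$ to genuine elements of $\NB(\mathcal{F}_1(X))$ that still miss $W$ and contain $x_n$. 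The delicate point is precisely this upgrade: the property of Kelley transports \emph{subcontinua}, not nonblockers, so the technical heart of the proof is turning the Kelley continua through $x_n$ into nonblockers whose Hausdorff limits stay inside $B(x)$. Once $K=X$ is excluded, the two inclusions give $K=B(x)=\pi(x)$ along every convergent subsequence, and compactness of $\mathcal{C}(X)$ yields $\pi(x_n)\to\pi(x)$, so $\pi$ is continuous.
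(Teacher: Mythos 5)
Your reduction to closed-graph continuity and your first inclusion are correct, and that inclusion ($\pi(x)\subseteq K$ because $K\in\NB(\mathcal{F}_1(X))$ by compactness, $x\in K$, and $\pi(x)$ is minimal) is precisely the paper's proof that $\pi(x)\subseteq\liminf\pi(x_n)$. Your mechanism for the hard inclusion is also sound as far as it goes: if $y\in K\setminus B(x)$, then Kelley continua $L_n\ni y_n$ with $L_n\to L$, $x_n\notin L_n$, must satisfy $L_n\subseteq B(x_n)$ by the contrapositive of Remark~\ref{rem6676}, so $K$ contains $\varepsilon$-dense subcontinua of $\kappa_{X\setminus\{x\}}(y)$ for every $\varepsilon$, forcing $K=X$. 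The genuine gap is the last third: it is a plan, not a proof, and the plan as described would fail. Lemma~\ref{lemdyfh6y} manufactures a nonblocker \emph{disjoint from} a prescribed proper subcontinuum, as the residual set of an expanding tower of continua; it offers no control whatsoever to make the resulting nonblocker contain a prescribed point $x_n$, let alone to pin $\limsup A_n$ inside $B(x)$, and (as you yourself observe) the property of Kelley transports subcontinua, not nonblockers. So the upper-bound construction $A_n\supseteq\pi(x_n)$ is unproven and not obtainable from the cited tools.

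However, the obstacle is self-inflicted: your premise that ``$X$ itself is a nonblocker'' misreads the convention in force. The paper explicitly uses $X\notin\NB(\mathcal{F}_1(X))$ (end of the proof of Theorem~\ref{treokd8g}), and both Theorem~\ref{theo98iw} and the definition (\ref{defminimal}) of $\pi$ are vacuous under your reading; in particular every member of $\NB(\mathcal{F}_1(X))$ is proper and hence has empty interior. Since $\NB(\mathcal{F}_1(X))$ is a continuum, it is closed in $2^X$, so $K=\lim\pi(x_n)\in\NB(\mathcal{F}_1(X))$ and your conclusion $K=X$ is already the desired contradiction --- no upper bound $A_n$ is needed at all. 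With that one correction your argument closes and runs parallel to the paper's: the paper likewise extracts a convergent subsequence $\pi(x_{n_k})\to D\in\NB(\mathcal{F}_1(X))$, notes $\Int(D)=\emptyset$, encloses $D$ in an open set $U$ disjoint from a fixed open $V$, chooses one continuum $L$ with $z\in L$, $x\notin L$, $L\cap V\neq\emptyset$, and uses Kelley plus Remark~\ref{rem6676} to conclude $x_{n_k}\in L_{n_k}$, hence $x\in L$, a contradiction --- a single-witness version of your $\varepsilon$-density scheme that avoids any limiting argument over $\varepsilon$.
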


\begin{proof}
Let $(x_n)_{n\in\mathbb{N}}$ be a sequence in $X$ such that $\lim_{n\to\infty}x_n=x$ for some $x\in X$. We prove that $\lim_{n\to\infty}\pi(x_n)=\pi(x)$. To see this, we show that $\limsup \pi(x_n)\subseteq \pi(x)$ and $\pi(x)\subseteq \liminf \pi(x_n)$.

\medskip

We see that $\limsup \pi(x_n)\subseteq \pi(x)$. Let $z\in \limsup \pi(x_n)$. Let $(n_k)_{k\in\N}$ be an increasing sequence of positive numbers such that, for each $k\in\N$, there exists $z_{n_k}\in \pi(x_{n_k})$ where $\lim_{k\to\infty}z_{n_k}=z$. Furthermore, since $\NB(\mathcal{F}_1(X))$ is compact and $\pi(x_{n_k})\in\NB(\mathcal{F}_1(X))$ for all $k$, without loss of generality we may suppose that  $\lim_{k\to\infty}\pi(x_{n_k})=D$ for some $D\in\NB(\mathcal{F}_1(X))$. Hence, $\Int(D)=\emptyset$. 
Note that $z\in D$. Suppose that $z\in X\setminus \pi(x)$. Let $U$ and $V$ be nonempty open subsets of $X$ such that $D\subseteq U$ and $U\cap V=\emptyset$. Since $D\in\langle U\rangle$, there exists $m\in\N$ such that $\pi(x_{n_k})\in \langle U\rangle$ for each $k\geq m$; i.e., $\pi(x_{n_k})\subseteq U$ for each $k\geq m$. Also, since $z\notin \pi(x)$, there exists a subcontinuum $L$ of $X$ such that $z\in L, x\notin L$ and $L\cap V\neq\emptyset$. Observe that $\langle X\setminus\{x\}, V\rangle$ is an open subset of $2^X$ such that $L\in \langle X\setminus\{x\}, V\rangle$. Since $X$ has the property of Kelley, there exists a sequence of subcontinua $(L_{n_k})_{k\in\mathbb{N}}$ of $X$ such that $z_{n_k}\in L_{n_k}$ for each $k\in\mathbb{N}$, and $\lim_{k\to\infty} L_{n_k}=L$. Thus, there is $l\in\N$ such that $L_{n_k}\in \langle X\setminus\{x\}, V\rangle$ for each $k\geq l$. Let $j>\max\{m,l\}$. Since $\pi(x_{n_{j}})\subseteq U$, $L_{n_j}\cap V\neq\emptyset$ and $U\cap V=\emptyset$, we have that $L_{n_{j}}\cap X\setminus \pi(x_{n_{j}})\neq\emptyset$. Furthermore, $L_{n_{j}}\cap \pi(x_{n_{j}})\neq\emptyset$, because $z_{n_{j}}\in L_{n_{j}}\cap \pi(x_{n_{j}})$. By Proposition~\ref{blockpi}, $\pi(x_{n_{j}})=B(x_{n_{j}})$. Hence, $x_{n_{j}}\in L_{n_{j}}$, by Remark~\ref{rem6676}. Since $j$ was an arbitrary integer greater than $\max\{m,l\}$, we have that $x_{n_j}\in L_{n_j}$ for each $j\geq \max\{m,l\}$. Thus, $x\in L$. A contradiction. Therefore, $z\in \pi(x)$ and $\limsup \pi(x_n)\subseteq \pi(x)$.

\medskip

We show that $\pi(x)\subseteq \liminf \pi(x_n)$. Suppose that there exists $y\in\pi(x)$ such that $y\in X\setminus \liminf \pi(x_n)$. Then there exist an open subset $U$ of $X$ such that $y\in U$, and an increasing sequence of positive integers $(n_k)_{k\in\N}$ such that $\pi(x_{n_k})\cap U=\emptyset$ for each $k\in\N$. Since $\NB(\mathcal{F}_1(X))$ is compact and $(\pi(x_{n_k}))_{k\in\N}$ is a sequence in $\NB(\mathcal{F}_1(X))$, we may suppose that $\lim_{k\to\infty}\pi(x_{n_k})=D$ for some $D\in \NB(\mathcal{F}_1(X))$. Observe that $D\cap U=\emptyset$ and $x\in D$. This contradicts that $y\in\pi(x)$, $D\in\NB(\mathcal{F}_1(X))$ and $y\notin D$. Therefore, $\pi(x)\subseteq \liminf \pi(x_n)$.
\end{proof}

\begin{lemma}\label{orden}
Let $X$ be a decomposable continuum with the property of Kelley  such that $\NB(\mathcal{F}_1(X))$ is a continuum. Let $x,y\in X$. If $\pi(x)\cap\pi(y)\neq\emptyset$, then $\pi(x)\subseteq \pi(y)$ or $\pi(y)\subseteq \pi(x)$.
\end{lemma}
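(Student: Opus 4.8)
The plan is to collapse the problem onto the block sets $B(x)$ using Proposition~\ref{blockpi}, which gives $\pi(x)=B(x)$ and $\pi(y)=B(y)$, and then to combine two structural facts: each $\pi(x)$ is connected (since $\pi$ takes values in $\mathcal{C}(X)$) and belongs to $\NB(\mathcal{F}_1(X))$ by Theorem~\ref{theo98iw} together with the cited results of \cite{Piceno}, while $\pi(y)$ is the \emph{smallest} nonblocker containing $y$, directly from its definition as an intersection in (\ref{defminimal}). With these in hand the whole argument should reduce to a short case split, with Remark~\ref{rem6676} doing the decisive work.

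Assume $\pi(x)\cap\pi(y)\neq\emptyset$. If $\pi(x)\subseteq\pi(y)$ there is nothing to prove, so I would suppose $\pi(x)\not\subseteq\pi(y)$ and aim to conclude $\pi(y)\subseteq\pi(x)$. The key observation is that the continuum $\pi(x)=B(x)$ now meets both $B(y)$ (because $\pi(x)\cap\pi(y)\neq\emptyset$) and $X\setminus B(y)$ (because $\pi(x)\not\subseteq\pi(y)$). Instantiating Remark~\ref{rem6676} at the point $y$ rather than $x$ — that is, applying it to the subcontinuum $L=\pi(x)$ relative to $B(y)$ — forces $y\in\pi(x)$.

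Once $y\in\pi(x)$ is established the conclusion is immediate: since $\pi(x)\in\NB(\mathcal{F}_1(X))$ and $y\in\pi(x)$, the set $\pi(x)$ is one of the members of the family $\{A\in\NB(\mathcal{F}_1(X)):y\in A\}$ whose intersection defines $\pi(y)$, whence $\pi(y)\subseteq\pi(x)$, as desired. I do not expect a genuine obstacle here, since the substance is entirely carried by Proposition~\ref{blockpi} and Remark~\ref{rem6676}; the only point deserving care is recognizing that a \emph{single} application of the remark (centred at $y$) already yields the containment, so the symmetric case $\pi(y)\not\subseteq\pi(x)$ need not be run separately. The remaining mild subtlety is verifying the hypotheses of Remark~\ref{rem6676} — that $\pi(x)$ is a bona fide subcontinuum and that $B(y)$ and its complement are both met — all of which follow from $\pi$ mapping into $\mathcal{C}(X)$ together with the two standing assumptions $\pi(x)\cap\pi(y)\neq\emptyset$ and $\pi(x)\not\subseteq\pi(y)$.
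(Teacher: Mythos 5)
Your proof is correct and follows essentially the same route as the paper's: both use Proposition~\ref{blockpi} to identify $\pi(y)=B(y)$, apply Remark~\ref{rem6676} to the subcontinuum $L=\pi(x)$ (which meets both $B(y)$ and $X\setminus B(y)$) to conclude $y\in\pi(x)$, and then deduce $\pi(y)\subseteq\pi(x)$ from the minimality of $\pi(y)$ in (\ref{defminimal}). There is no gap; your single application of the remark centred at $y$ is exactly what the paper does.
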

\begin{proof}
Let $x,y\in X$ such that $\pi(x)\cap\pi(y)\neq\emptyset$. Suppose that $\pi(x)\setminus \pi(y)\neq\emptyset$. Observe that $\pi(y)=B(y)$, by Proposition~\ref{blockpi}. Furthermore, $\pi(x)\in\mathcal{C}(X)$ such that $\pi(x)\cap B(y)\neq\emptyset$ and $\pi(x)\cap X\setminus B(y)\neq\emptyset$. Thus, $y\in \pi(x)$, by Remark~\ref{rem6676}. Therefore, $\pi(y)\subseteq \pi(x)$, by (\ref{defminimal}).
\end{proof}

Given a decomposable continuum with the property of Kelley $X$ such that $\NB(\mathcal{F}_1(X))$ is a continuum. Let 
\begin{equation}\label{defL}
    \mathcal{L}=\{\pi(x) : x\in X\}.
\end{equation} 
Note that $\mathcal{L}=\pi(X)$. Hence, $\mathcal{L}$ is a subcontinuum of $\NB(\mathcal{F}_1(X))$, by Theorem~\ref{partition}. Since $\mathcal{L}$ is compact, there are maximal and minimal elements of $\mathcal{L}$ that we represent:
\begin{equation}\label{defMN}
    \mathcal{M}=\{\pi(x) : \pi(x) \text{ is maximal in }\mathcal{L}\}; \text{ and }\mathcal{N}=\{\pi(x) : \pi(x) \text{ is minimal in }\mathcal{L}\}.
\end{equation}

\begin{proposition}\label{khig788tj}
Let $X$ be a decomposable continuum with the property of Kelley such that $\NB(\mathcal{F}_1(X))$ is a continuum. Let $\mathcal{L}, \mathcal{M}$ and $\mathcal{N}$ be as (\ref{defL}) and (\ref{defMN}). Then,
\begin{enumerate}
    \item $\mathcal{M}$ is an upper semicontinuous decomposition of $X$.
    \item $\pi(x)$ is terminal, for each $\pi(x)\in \mathcal{N}$.
    \item $\mathcal{N}$ is compact.
    \item $\bigcup\mathcal{N}$ is a subcontinuum of $X$.
\end{enumerate}
\end{proposition}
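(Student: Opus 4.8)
The plan is to exploit three structural facts already in place: the family $\mathcal{L}=\pi(X)$ is a \emph{compact} subcontinuum of $\NB(\mathcal{F}_1(X))$ on which, by Lemma~\ref{orden}, inclusion is a laminar order (any two members are nested or disjoint); $\pi$ is continuous by Theorem~\ref{partition}, with each $\pi(x)=B(x)$ a closed subcontinuum by Proposition~\ref{blockpi}; and Remark~\ref{rem6676} governs how a subcontinuum may meet $B(x)$ and its complement. For \textbf{(1)}, I would first show each $x$ lies in a largest member $M(x)$ of $\mathcal{L}$. The set $\mathcal{L}_x=\{D\in\mathcal{L}:x\in D\}$ is closed in $\mathcal{L}$, hence compact, and by laminarity it is totally ordered by inclusion; a compact chain of continua has a maximum, seen by covering $\Cl(\bigcup\mathcal{L}_x)$ by finitely many $\varepsilon$-balls, choosing finitely many members of $\mathcal{L}_x$ meeting them, and taking the largest, so that $\Cl(\bigcup\mathcal{L}_x)\in\mathcal{L}_x$. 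Maximality in $\mathcal{L}_x$ forces maximality in $\mathcal{L}$, whence $\mathcal{M}=\{M(x):x\in X\}$; by Lemma~\ref{orden} two maximal members that meet coincide, so $\mathcal{M}$ partitions $X$ into closed sets. Upper semicontinuity then follows from Proposition~\ref{usc}: if $x_n\to x$ and $z\in\limsup M(x_n)$, pick $z_{n_k}\in M(x_{n_k})$ with $z_{n_k}\to z$, pass (by compactness of $\mathcal{L}$) to a subsequence with $M(x_{n_k})\to D\in\mathcal{L}$; since $x_{n_k}\in M(x_{n_k})$ we get $x\in D$ and $z\in D$, so $D\subseteq M(x)$ by maximality and $z\in M(x)$.

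For \textbf{(3)}, I would prove $\mathcal{N}$ is closed in the compact set $\mathcal{L}$. If $\pi(x_n)\in\mathcal{N}$ and $\pi(x_n)\to D$, a subsequence gives $x_n\to x$ with $D=\pi(x)$ by continuity. Were $D$ not minimal, choose $\pi(w)\subsetneq D$ and, since $D\subseteq\liminf\pi(x_n)$, points $w_n\in\pi(x_n)$ with $w_n\to w$; minimality of each $\pi(x_n)$ together with $w_n\in\pi(x_n)$ and Lemma~\ref{orden} forces $\pi(x_n)\subseteq\pi(w_n)$, and passing to the Hausdorff limit (inclusions survive such limits) yields $D\subseteq\pi(w)$, contradicting $\pi(w)\subsetneq D$. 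This inclusion-under-limits idea is the whole engine of the step.

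The main obstacle is \textbf{(2)}. The easy half is clean: if a subcontinuum $Y$ meets $\pi(x)=B(x)$ but $x\notin Y$, then by Remark~\ref{rem6676} $Y$ cannot also meet the open set $X\setminus B(x)$, so $Y\subseteq\pi(x)$. The hard half is the case $x\in Y$ with $Y\not\subseteq\pi(x)$, where I must show $\pi(x)\subseteq Y$. Assuming some $u\in\pi(x)\setminus Y$, connectedness of the continuum $\pi(x)$ produces a point $c\in Y\cap\pi(x)$ approached by points $w_n\in\pi(x)\setminus Y$; applying the property of Kelley at $c$ to the continuum $Y$ gives continua $Y_n\to Y$ with $w_n\in Y_n$, and since $Y$ meets $X\setminus\pi(x)$ while $w_n\in\pi(x)$, Remark~\ref{rem6676} forces $x\in Y_n$ eventually. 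I expect the contradiction to come from combining this with minimality (for $w\in\pi(x)$ minimality gives $\pi(x)\subseteq\pi(w)$, i.e. $x\in B(w)$); locating the precise clash is the step I expect to be genuinely delicate.

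Finally, for \textbf{(4)}, closedness of $\bigcup\mathcal{N}$ is immediate, since $\mathcal{N}$ is compact by (3) and the union of a compact family of compacta is compact. For connectedness I would reduce to showing $\mathcal{N}$ is connected: a separation of $\bigcup\mathcal{N}$ into two closed sets throws each connected minimal element entirely to one side and so separates $\mathcal{N}$, and conversely. To see $\mathcal{N}$ is connected I would use that $\mathcal{L}$ is a continuum together with the fact that every $D\in\mathcal{L}$ dominates a minimal member (a downward compact-chain, or Zorn, argument dual to the one in (1)): a hypothetical separation $\mathcal{N}=\mathcal{N}_1\sqcup\mathcal{N}_2$ propagates to the closed sets $\mathcal{L}_i=\{D\in\mathcal{L}:\text{some }E\in\mathcal{N}_i\text{ has }E\subseteq D\}$, which cover $\mathcal{L}$; the only configuration left to exclude is a single $D$ dominating minimal members from both parts, and here I expect the terminal property from (2) to be essential, via a subcontinuum of $D$ irreducible between the two disjoint terminal minimal members. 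Pinning down this exclusion is the point where I anticipate spending the most care after (2).
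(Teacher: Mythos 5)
Parts (1) and (3) of your proposal are correct, and your (3) is in fact simpler than the paper's: the paper proves compactness of $\mathcal{N}$ through the property of Kelley, terminality and a Baire-category argument, whereas your route --- $w_n\in\pi(x_n)$ with $w_n\to w$, minimality giving $\pi(w_n)=\pi(x_n)$, and then continuity of $\pi$ (Theorem~\ref{partition}) together with persistence of inclusions under Hausdorff limits forcing $D\subseteq\pi(w)$ --- closes the step without either ingredient. The genuine gap in your treatment of (2) is that you never produce the contradiction in the ``hard half,'' and the Kelley-sequence machinery you set up there is an unnecessary detour: you already state the key fact, namely that for $w\in\pi(x)$ minimality and (\ref{defminimal}) give $\pi(w)=\pi(x)$, hence $B(w)=\pi(x)$ by Proposition~\ref{blockpi}. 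With this, terminality is one line, and it is exactly the paper's argument: if $Y$ is a subcontinuum meeting $\pi(x)$ and there exists $u\in\pi(x)\setminus Y$, apply your ``easy half'' at $u$ rather than at $x$ --- $Y$ meets $B(u)=\pi(x)$ and $u\notin Y$, so Remark~\ref{rem6676} forces $Y\subseteq B(u)=\pi(x)$; if no such $u$ exists, then $\pi(x)\subseteq Y$. There is no delicate second case at all.

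The gap in (4) is more serious. Your reduction to connectedness of $\mathcal{N}$ and the closedness of the sets $\mathcal{L}_i=\{D\in\mathcal{L}: E\subseteq D\text{ for some }E\in\mathcal{N}_i\}$ are fine (the latter using your (3)), but the exclusion you defer --- that no single $D\in\mathcal{L}$ can contain minimal members from both parts --- is not reachable by the device you suggest. Terminality only tells you that a subcontinuum of $D$ irreducible between $E_1$ and $E_2$ contains both of them, which is no contradiction; and at this stage of the development nothing rules out non-minimal members of $\mathcal{L}$ (the fact $\mathcal{N}=\mathcal{L}$ is Theorem~\ref{theopartition}, whose proof \emph{uses} part (4)), so the offending configuration simply reproduces inside $\pi(D)$ and the local argument does not terminate. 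The paper's proof of (4) uses a different idea: it separates $\bigcup\mathcal{N}=A\cup B$, observes that $\langle X\setminus B\rangle\cap\mathcal{L}$ is a nonempty proper open subset of the continuum $\mathcal{L}$, and so extracts $\pi(x_n)\in\langle X\setminus B\rangle\cap\mathcal{L}$ with $\pi(x_n)\to\pi(x)$ and $\pi(x)\cap B\neq\emptyset$; taking $y\in\pi(x)\cap B$ (so $\pi(y)\subseteq B$, since $y\in\bigcup\mathcal{N}$) and $y_n\in\pi(x_n)$ with $y_n\to y$, each $\pi(y_n)\subseteq\pi(x_n)\subseteq X\setminus B$ meets $A$ (it contains a minimal member, by your own downward Zorn step, and that member lies in $A\cup B$), whence continuity of $\pi$ gives $\pi(y)\cap A\neq\emptyset$, contradicting $\pi(y)\subseteq B$. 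Until you replace your exclusion step by something of this kind, parts (2) and (4) of your proposal remain incomplete, though (2) is repaired by the one-line observation above.
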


\begin{proof}
We see that $X=\bigcup\mathcal{M}$. Let $x\in X$. By (\ref{defminimal}), $x\in\pi(x)$. Furthermore, since $\mathcal{L}$ is compact, there exists $z\in X$ such that $\pi(x)\subseteq \pi(z)$ and $\pi(z)\in\mathcal{M}$. Therefore, $x\in \bigcup\mathcal{M}$ and $X=\bigcup\mathcal{M}$. Note that if $\pi(x)\cap\pi(y)\neq\emptyset$ and $\pi(x), \pi(y)\in \mathcal{M}$, then $\pi(x)=\pi(y)$, by Lemma~\ref{orden}. Thus, $\mathcal{M}$ is a decomposition of $X$. We show that $\mathcal{M}$ is an upper semicontinuous decomposition. Let $\rho\colon X\to \mathcal{M}$ be the quotient map. Let $(x_n)_{n\in\mathbb{N}}$ be a sequence of $X$ such that $\lim_{n\to\infty}x_n=x$ for some $x\in X$. We see that $\limsup \rho(x_n)\subseteq \rho(x)$. Let $z\in \limsup \rho(x_n)$. Since $(\rho(x_n))_{n\in\N}$ is a sequence in $\mathcal{L}$ and $\mathcal{L}$ is compact, there exists a subsequence $(\rho(x_{n_k}))_{k\in\N}$ of $(\rho(x_n))_{n\in\N}$, such that $\lim_{k\to\infty}\rho(x_{n_k})=D$ for some $D\in\mathcal{L}$ where $z\in D.$ Let $w\in D$ be such that $D=\pi(w)$. Since $x_{n_k}\in \rho(x_{n_k})$ for each $k\in\N$, and $\lim_{k\to\infty}x_{n_k}=x$, we have that $x\in \pi(w)$. Hence, $\rho(x)\cap\pi(z)\neq\emptyset$. Since $\rho(x)\in\mathcal{M}$, $\pi(z)\subseteq \rho(x)$. Thus, $z\in\rho(x)$ and $\limsup \rho(x_n)\subseteq \rho(x)$. Therefore, $\mathcal{M}$ is an upper semicontinuous decomposition of $X$, by Proposition~\ref{usc}. We have proved \textit{1}.

\smallskip

Let $x\in X$ be such that $\pi(x)\in\mathcal{N}$. We prove that $\pi(x)$ is terminal. Since $\pi(x)\in\NB(\mathcal{F}_1(X))$, $\pi(y)\subseteq \pi(x)$ for every $y\in\pi(x)$, by (\ref{defminimal}). Since $\pi(x)$ is minimal, $\pi(y)=\pi(x)$ for all $y\in\pi(x)$. Let $L$ be a subcontinuum of $X$ such that $\pi(x)\cap L\neq\emptyset$. Suppose that $\pi(x)\setminus L\neq\emptyset$. Let $y\in\pi(x)\setminus L$. Note that $\pi(y)\cap L\neq\emptyset$. We know that $\pi(y)=B(y)$, by Proposition~\ref{blockpi}. Thus, $L\subseteq \pi(y)$, by Remark~\ref{rem6676}. Since $\pi(y)=\pi(x)$, $L\subseteq \pi(x)$. Therefore, $\pi(x)$ is terminal. The proof of \textit{2} is completed.

\smallskip

We see \textit{3}. Suppose that $\mathcal{N}$ is not compact, that is, there exists a sequence $(\pi(x_n))_{n\in\mathbb{N}}$ in $\mathcal{N}$ such that $\lim_{n\to\infty}\pi(x_n)=D$ where $D\notin \mathcal{N}$. Since $\mathcal{L}$ is compact, $D\in\mathcal{L}$. Let $z\in D$ be such that $D=\pi(z)$. Since $D\notin \mathcal{N}$, there exists $w\in\pi(z)$ such that $\pi(w)\subseteq \pi(z)$ and $\pi(w)\neq\pi(z)$. Let $p\in \pi(z)\setminus \pi(w)$. Since $\Int(\pi(x_n))=\emptyset$ for each $n\in\N$, $\Int(\pi(z))=\emptyset$ and $\lim_{n\to\infty}\pi(x_n)=\pi(z)$, we have that $K=(\bigcup_{n\in\N}\pi(x_n))\cup \pi(z)$ is a compact such that $\Int(K)=\emptyset$ (see \cite[Corollary~25.4]{Willard}). Let $V$ be a nonempty open subsets of $X$ such that $V\cap K=\emptyset$. Since $p\notin\pi(w)$, there exists a continuum $L$ such that $p\in L$, $L\cap\pi(w)=\emptyset$ and $L\cap V\neq\emptyset$. Observe that $\langle X, V\rangle$ is an open subset of $2^X$ such that $L\in \langle X, V\rangle$. Since $\lim_{n\to\infty}\pi(x_n)=\pi(z)$ and $p\in\pi(z)$, there exists a sequence $(p_n)_{n\in\N}$ such that $p_n\in\pi(x_n)$ for each $n\in\N$, and $\lim_{n\to\infty}p_n=p$. Since $X$ has the property of Kelley, there exists a sequence $(L_n)_{n\in\N}$ in $\mathcal{C}(X)$ such that $p_n\in L_n$ for each $n$, and $\lim_{n\to\infty}L_n=L$. Hence, there is $m\in\N$ such that $L_n\in \langle X, V\rangle$ for each $n\geq m$. Note that we have both $L_n\cap \pi(x_n)\neq\emptyset$ and $L_n\cap (X\setminus \pi(x_n))\neq\emptyset$, for each $n\geq m$. Thus, $\pi(x_n)\subseteq L_n$, because $\pi(x_n)$ is terminal,  for each $n\geq m$ (see \textit{2} in this proposition). Therefore, $\pi(z)\subseteq L$. This contradicts the facts that $L\cap \pi(w)=\emptyset$ and $\pi(w)\subseteq \pi(z)$. We finish the proof of \textit{3}.

\smallskip

We prove \textit{4}. Since $\mathcal{N}$ is compact, $\bigcup\mathcal{N}$ is compact, by \cite[Exercise~11.5]{Illanes-Nadler}. We suppose that $\bigcup\mathcal{N}$ is not connected, that is, $\bigcup\mathcal{N}=A\cup B$ where $A$ and $B$ are nonempty compact disjoint subsets of $X$. Observe that if $x\in \bigcup\mathcal{N}$, then there exists $\pi(z)\in\mathcal{N}$ such that $x\in\pi(z)$. Hence, $\pi(x)\subseteq \pi(z)$ and, since $\pi(z)$ is minimal, $\pi(x)=\pi(z)$. We have that $\pi(x)\subseteq \bigcup\mathcal{N}$ for each $x\in \bigcup\mathcal{N}$. Furthermore, $\pi(x)$ is connected for every $x\in \bigcup\mathcal{N}$. Thus, $\pi(x)\subseteq A$ for each $x\in A$, and $\pi(y)\subseteq B$ for each $y\in B$. 

Note that $\mathcal{L}$ is a continuum and $\langle X\setminus B\rangle\cap\mathcal{L}$ is a proper open subset of $\mathcal{L}$. Hence, there exists a sequence $(\pi(x_n))_{n\in\mathbb{N}}$ in $\langle X\setminus B\rangle\cap\mathcal{L}$ such that $\lim_{n\to\infty}\pi(x_n)=\pi(x)$ for some $\pi(x)\in \mathcal{L}\setminus\langle X\setminus B\rangle$. Let $y\in \pi(x)\cap B$. Observe that $\pi(y)\subseteq B$. Let $(y_n)_{n\in\mathbb{N}}$ be a sequence such that $y_n\in\pi(x_n)$ for each $n\in\mathbb{N}$, and $\lim_{n\to\infty}y_n=y$. Since $\pi(y_n)\subseteq \pi(x_n)\subseteq X\setminus B$, $\pi(y_n)\cap A\neq\emptyset$ for each $n\in\mathbb{N}$. By Theorem~\ref{partition}, $\pi$ is a map and $\lim_{k\to\infty}\pi(y_n)=\pi(y)$. Thus, $\pi(y)\cap A\neq\emptyset$. This contradicts that $\pi(y)\subseteq B$ and $A\cap B=\emptyset$. Therefore, $\bigcup\mathcal{N}$ is a subcontinuum of $X$.
\end{proof}

\begin{theorem}\label{theopartition}
Let $X$ be a decomposable continuum with the property of Kelley such that $\NB(\mathcal{F}_1(X))$ is a continuum. Let $\mathcal{L}, \mathcal{M}$ and $\mathcal{N}$ be as (\ref{defL}) and (\ref{defMN}). Then, $\mathcal{N}=\mathcal{L}$ and $\mathcal{N}$ is a continuous decomposition of $X$.
\end{theorem}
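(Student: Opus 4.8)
The plan is to reduce everything to the single equality $\bigcup\mathcal{N}=X$. Once this is known, every $x\in X$ lies in some minimal element $\pi(z)\in\mathcal{N}$; then $x\in\pi(z)\in\NB(\mathcal{F}_1(X))$ forces $\pi(x)\subseteq\pi(z)$ by (\ref{defminimal}), and minimality of $\pi(z)$ gives $\pi(x)=\pi(z)\in\mathcal{N}$. Hence $\mathcal{L}=\{\pi(x):x\in X\}=\mathcal{N}$. Conversely $\mathcal{N}=\mathcal{L}$ trivially gives $\bigcup\mathcal{N}=\bigcup_{x\in X}\pi(x)=X$ since $x\in\pi(x)$, so the two statements are equivalent; thus $\bigcup\mathcal{N}=X$ is the heart of the theorem.

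To prove $\bigcup\mathcal{N}=X$ I would first record an auxiliary fact: \emph{every $\pi(x)$ contains a minimal element of $\mathcal{L}$}. For this, consider $\mathcal{C}_x=\{D\in\mathcal{L}:D\subseteq\pi(x)\}$, a nonempty closed, hence compact, subset of $\mathcal{L}$, the condition $D\subseteq\pi(x)$ being closed in the Hausdorff metric. Because a decreasing chain of subcontinua converges in the Hausdorff metric to its intersection, which is again a subcontinuum contained in $\pi(x)$, every chain in $\mathcal{C}_x$ has a lower bound in $\mathcal{C}_x$; by Zorn's Lemma $\mathcal{C}_x$ has a $\subseteq$-minimal element $\pi(z)$, and any $E\in\mathcal{L}$ with $E\subsetneq\pi(z)$ would lie in $\mathcal{C}_x$, so $\pi(z)$ is in fact minimal in $\mathcal{L}$, i.e. $\pi(z)\in\mathcal{N}$.

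Now suppose, for contradiction, that $K:=\bigcup\mathcal{N}$ is a proper subcontinuum of $X$; it is a subcontinuum by part \textit{4} of Proposition~\ref{khig788tj}. By Lemma~\ref{lemdyfh6y} there is $A\in\NB(\mathcal{F}_1(X))$ with $A\subseteq X\setminus K$. Pick $x\in A$. Since $A\in\NB(\mathcal{F}_1(X))$ contains $x$, (\ref{defminimal}) gives $\pi(x)\subseteq A$, so $\pi(x)\cap K=\emptyset$. On the other hand, by the auxiliary fact $\pi(x)$ contains some $\pi(z)\in\mathcal{N}$, whence $\emptyset\neq\pi(z)\subseteq\pi(x)\cap\bigcup\mathcal{N}=\pi(x)\cap K=\emptyset$, a contradiction. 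Therefore $\bigcup\mathcal{N}=X$ and, by the first paragraph, $\mathcal{N}=\mathcal{L}$.

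Finally I would check that $\mathcal{N}$ is a continuous decomposition. It is a decomposition: it covers $X$ since $\bigcup\mathcal{N}=X$, and two of its elements that meet are comparable by Lemma~\ref{orden} and hence equal by minimality. Since $\mathcal{N}=\mathcal{L}$, every element of $\mathcal{L}$ is both minimal and maximal, so $\mathcal{N}=\mathcal{M}$ and thus $\mathcal{N}$ is upper semicontinuous by part \textit{1} of Proposition~\ref{khig788tj}. For the quotient map $\rho\colon X\to\mathcal{N}$ one has $\rho(x)=\pi(x)$, because $x\in\pi(x)\in\mathcal{N}$; since $\pi$ is a map by Theorem~\ref{partition}, $x_n\to x$ yields $\pi(x_n)\to\pi(x)$ in the Hausdorff metric, so $\rho(x)=\pi(x)\subseteq\liminf\rho(x_n)$. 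By Proposition~\ref{usc} this is precisely lower semicontinuity, and the same convergence re-proves upper semicontinuity, so $\mathcal{N}$ is continuous. The main obstacle is the auxiliary fact that each $\pi(x)$ dominates a minimal element, which rests on the hyperspace compactness argument above; once it is in place, Lemma~\ref{lemdyfh6y} furnishes the contradiction almost immediately.
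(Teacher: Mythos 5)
Your proof is correct and follows essentially the same route as the paper's: both reduce the theorem to showing $\bigcup\mathcal{N}=X$, obtain a contradiction from Lemma~\ref{lemdyfh6y} applied to the proper subcontinuum $\bigcup\mathcal{N}$ (using Proposition~\ref{khig788tj}(\textit{4})), deduce $\mathcal{N}=\mathcal{L}=\mathcal{M}$ by minimality, and get continuity of the decomposition from Theorem~\ref{partition} and Proposition~\ref{usc}. The only difference is cosmetic: you spell out, via Zorn's Lemma and Hausdorff-limit compactness of $\mathcal{C}_x=\{D\in\mathcal{L}:D\subseteq\pi(x)\}$, the fact that every $\pi(x)$ contains a minimal element of $\mathcal{L}$, which the paper simply asserts from compactness of $\mathcal{L}$ when defining $\mathcal{M}$ and $\mathcal{N}$ in (\ref{defMN}).
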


\begin{proof}
In order to prove that $\mathcal{N}=\mathcal{L}$, we show first that $\bigcup\mathcal{N}=X$. We know that $\bigcup\mathcal{N}$ is a continuum, by Proposition~\ref{khig788tj} (\textit{4}). Suppose that $\bigcup \mathcal{N}\neq X$.
By Lemma~\ref{lemdyfh6y}, there exists $B\in\NB(\mathcal{F}_1(X))$ such that $B\cap \bigcup\mathcal{N}=\emptyset$. Note that $\pi(x)\subseteq B$ for each $x\in B$. Hence, there exists $z\in X$ such that $\pi(z)\in\mathcal{N}$ and $\pi(z)\subseteq B$. A contradiction. Therefore, $\bigcup\mathcal{N}=X$. 

We see that $\mathcal{N}=\mathcal{L}$. Let $x\in X$. Since $\bigcup\mathcal{N}=X$, there exists $z\in X$ such that $\pi(z)\in\mathcal{N}$ and $x\in\pi(z)$. Hence, $\pi(x)\subseteq \pi(z)$ and, since $\pi(z)$ is minimal, $\pi(x)=\pi(z)$. Thus, $\pi(x)\in\mathcal{N}$ and $\mathcal{N}=\mathcal{L}$.

Observe that if every $\pi(x)$ is minimal, then every $\pi(x)$ is maximal; i.e., $\mathcal{N}=\mathcal{M}$. Thus, $\mathcal{N}$ is a decomposition, by Proposition~\ref{khig788tj} (\textit{1}). Finally, we have that $\mathcal{N}$ is a continuous decomposition because $\pi$ is a map (see Proposition~\ref{usc} and Theorem~\ref{partition}).
\end{proof}

We know that $\mathcal{N}$ is a continuous decomposition of $X$, and $\mathcal{N}\subseteq \mathcal{C}(X)$. Let $T(\mathcal{N})$ denote the decomposition topology on $\mathcal{N}$ induced by the surjective map $q\colon X\to \mathcal{N}$ defined by $q(x)=\pi(x)$ for each $x\in X$, and let $T_V(\mathcal{N})$ denote the topology on $\mathcal{N}$ as a subspace of $\mathcal{C}(X)$. Then, by \cite[Theorem~13.10]{Nadler-Libro1}, $q$ is an open map, and $T(\mathcal{N})=T_V(\mathcal{N})$. Furthermore, since $q(x)\in \mathcal{C}(X)$ for each $x\in X$, $q$ is a monotone map.

%Let $Y=\mathcal{N}$ be the quotient space and let $f\colon X\to Y$ be the quotient map. Since $\mathcal{N}$ is a continuous decomposition of $X$ and $\pi(x)$ is a continuum for each $x\in X$, $f$ is monotone and open. %Also, since $\pi(x)$ is terminal for each $x\in X$, $f$ is atomic.

\begin{theorem}\label{lafhsk}
Let $X$ be a decomposable continuum with the property of Kelley such that $\NB(\mathcal{F}_1(X))$ is a continuum. Let $\mathcal{N}=\{\pi(x) : x\in X\}$ be the continuous decomposition of $X$ and let $q\colon X\to \mathcal{N}$ be the quotient map. Then, $\NB(\mathcal{F}_1(X))$ is homeomorphic to $\NB(\mathcal{F}_1(\mathcal{N}))$, and $\mathcal{F}_1(\mathcal{N})\subseteq \NB(\mathcal{F}_1(\mathcal{N}))$.
\end{theorem}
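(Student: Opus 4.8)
The plan is to let the quotient map $q$ induce the homeomorphism directly, via the two assignments $\Phi(A)=q(A)$ and $\Psi(\mathcal{A})=q^{-1}(\mathcal{A})$, and to show that they are mutually inverse bijections between the two nonblocker hyperspaces. The starting observation, on which everything rests, is that every nonblocker of $X$ is \emph{saturated} with respect to $\mathcal{N}$. Indeed, by Theorem~\ref{theo98iw} we have $X=\bigcup\NB(\mathcal{F}_1(X))$, so for each $x$ the value $\pi(x)$ is given by the first branch of (\ref{defminimal}); hence if $A\in\NB(\mathcal{F}_1(X))$ and $x\in A$, then $A$ is one of the sets in the defining intersection and $\pi(x)\subseteq A$. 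Consequently $q^{-1}(q(A))=A$ for every $A\in\NB(\mathcal{F}_1(X))$, i.e. each such $A$ is a union of fibers of $q$.

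Next I would check that $\Phi$ maps $\NB(\mathcal{F}_1(X))$ into $\NB(\mathcal{F}_1(\mathcal{N}))$. Fix $A\in\NB(\mathcal{F}_1(X))$ and a point $q(x)\in\mathcal{N}\setminus q(A)$; by saturation $x\in X\setminus A$. If $L\in\mathcal{C}(X)$ satisfies $x\in L\subseteq X\setminus A$, then $q(L)\in\mathcal{C}(\mathcal{N})$, $q(x)\in q(L)$, and $q(L)\cap q(A)=\emptyset$, since a point of $L$ mapping into $q(A)$ would lie in $q^{-1}(q(A))=A$. Taking unions gives $q(\kappa_{X\setminus A}(x))\subseteq \kappa_{\mathcal{N}\setminus q(A)}(q(x))$. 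As $q$ is a continuous surjection it carries the dense set $\kappa_{X\setminus A}(x)$ onto a dense subset of $\mathcal{N}$, so $\kappa_{\mathcal{N}\setminus q(A)}(q(x))$ is dense and $q(A)\in\NB(\mathcal{F}_1(\mathcal{N}))$.

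Symmetrically I would show $\Psi$ maps $\NB(\mathcal{F}_1(\mathcal{N}))$ into $\NB(\mathcal{F}_1(X))$. Fix $\mathcal{A}\in\NB(\mathcal{F}_1(\mathcal{N}))$ and $x$ with $q(x)\in\mathcal{N}\setminus\mathcal{A}$, i.e. $x\in X\setminus q^{-1}(\mathcal{A})$. For each $\mathcal{D}\in\mathcal{C}(\mathcal{N})$ with $q(x)\in\mathcal{D}\subseteq\mathcal{N}\setminus\mathcal{A}$, monotonicity of $q$ makes $q^{-1}(\mathcal{D})$ a subcontinuum of $X$ containing $x$ and contained in $X\setminus q^{-1}(\mathcal{A})$; hence $q^{-1}(\kappa_{\mathcal{N}\setminus\mathcal{A}}(q(x)))\subseteq \kappa_{X\setminus q^{-1}(\mathcal{A})}(x)$. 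Because $q$ is open, preimages of dense sets are dense, so $\kappa_{X\setminus q^{-1}(\mathcal{A})}(x)$ is dense and $q^{-1}(\mathcal{A})\in\NB(\mathcal{F}_1(X))$. Now $\Psi\circ\Phi=\mathrm{id}$ by saturation and $\Phi\circ\Psi=\mathrm{id}$ by surjectivity of $q$, so $\Phi$ is a bijection onto $\NB(\mathcal{F}_1(\mathcal{N}))$. Since $\Phi$ is the restriction of the induced hyperspace map $2^q\colon 2^X\to 2^{\mathcal{N}}$, $A\mapsto q(A)$, which is continuous, and since $\NB(\mathcal{F}_1(X))$ is compact (it is a continuum) while the target sits inside the metric space $2^{\mathcal{N}}$, $\Phi$ is a continuous bijection from a compact space onto a Hausdorff one, hence a homeomorphism.

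For the inclusion $\mathcal{F}_1(\mathcal{N})\subseteq\NB(\mathcal{F}_1(\mathcal{N}))$, note that every element of $\mathcal{N}$ is some $\pi(x)$, that $\pi(x)\in\NB(\mathcal{F}_1(X))$, and that $q(\pi(x))=\{\pi(x)\}$ because $q$ collapses $\pi(x)$ to a single point; hence $\Phi(\pi(x))=\{\pi(x)\}\in\NB(\mathcal{F}_1(\mathcal{N}))$, giving the inclusion. The main obstacle I expect is precisely the bidirectional transfer of the nonblocking (density) condition across the quotient: establishing the two $\kappa$-containments and deploying monotonicity and openness in the right places to move density in each direction. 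Once these are secured, the homeomorphism and the final inclusion follow routinely from compactness and from $\pi(x)\in\NB(\mathcal{F}_1(X))$.
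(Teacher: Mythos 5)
Your proposal is correct and takes essentially the same route as the paper: both hinge on the saturation identity $q^{-1}(q(A))=A$ for $A\in\NB(\mathcal{F}_1(X))$ (via minimality of $\pi(x)$ and Theorem~\ref{theo98iw}), realize the map as the restriction of the induced map $2^q$, conclude it is a homeomorphism by the compact-to-Hausdorff argument, and obtain the final inclusion from $\pi(x)\in\NB(\mathcal{F}_1(X))$. The only difference is that where you transfer the nonblocking property across $q$ in both directions by hand, via the containments $q(\kappa_{X\setminus A}(x))\subseteq\kappa_{\mathcal{N}\setminus q(A)}(q(x))$ and $q^{-1}(\kappa_{\mathcal{N}\setminus\mathcal{A}}(q(x)))\subseteq\kappa_{X\setminus q^{-1}(\mathcal{A})}(x)$ together with openness and monotonicity of $q$, the paper simply cites \cite[Lemma~5.1]{Escobedo-Villanueva}, which asserts exactly this transfer.
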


\begin{proof}
Let $D\in\NB(\mathcal{F}_1(X))$. By (\ref{defminimal}), $\pi(x)\subseteq D$ for each $x\in D$. Hence, $D=\bigcup\{\pi(x) : x\in D\}$, for each $D\in \NB(\mathcal{F}_1(X))$. Thus, we have that $D=q^{-1}(q(D))$ for each $D\in\NB(\mathcal{F}_1(X))$. Since $q$ is an open monotone map, by \cite[Lemma~5.1]{Escobedo-Villanueva}, we have that
\begin{equation}\label{eq43re}
    q(D)\in \NB(\mathcal{F}_1(\mathcal{N}))\text{ for each }D\in\NB(\mathcal{F}_1(X)).
\end{equation}
Let $g\colon \NB(\mathcal{F}_1(X))\to\NB(\mathcal{F}_1(\mathcal{N}))$ be defined by $g(D)=q(D)$ for each $D\in \NB(\mathcal{F}_1(X))$. Note that $g$ is well defined, by (\ref{eq43re}). Furthermore, $g=2^{q}|_{\NB(\mathcal{F}_1(X))}$ where $2^q\colon 2^X\to2^\mathcal{N}$ is the induced map given by $2^q(A)=q(A)$ for each $A\in 2^X$. Since $2^q$ is a map, $g$ is a map (see \cite[p.188]{Illanes-Nadler}). 

We see that $g$ is one-to-one. Let $A, B\in \NB(\mathcal{F}_1(X))$ be such that $A\neq B$. Without loss of generality, we may suppose that there exists $x\in A\setminus B$. Hence, $\pi(x)\in q(A)$. If $\pi(x)\in q(B)$, then there exists $z\in B$ such that $\pi(z)=\pi(x)$. Since $\pi(z)\subseteq B$, $x\in B$. A contradiction. Thus, $\pi(x)\notin q(B)$. Therefore, $g(A)\neq g(B)$ and $g$ is one-to-one.

Let $B\in \NB(\mathcal{F}_1(\mathcal{N}))$. By \cite[Lemma~5.1]{Escobedo-Villanueva}, $q^{-1}(B)\in \NB(\mathcal{F}_1(X))$. Since $q$ is surjective, $q(q^{-1}(B))=B$. Thus, $g(q^{-1}(B))=B$ and $g$ is surjective. Since $\NB(\mathcal{F}_1(X))$ is compact and $\NB(\mathcal{F}_1(\mathcal{N}))$ is a Hausdorff space, $g$ is a homeomorphism (see \cite[Theorem~17.14]{Willard}). Therefore, $\NB(\mathcal{F}_1(X))$ is homeomorphic to $\NB(\mathcal{F}_1(\mathcal{N}))$.

Finally, if $D\in \mathcal{F}_1(\mathcal{N})$, then there exists $x\in X$ such that $D=\{\pi(x)\}$. Since $\pi(x)\in\NB(\mathcal{F}_1(X))$, $g(\pi(x))\in \NB(\mathcal{F}_1(\mathcal{N}))$. Therefore, $D\in \NB(\mathcal{F}_1(\mathcal{N}))$ and $\mathcal{F}_1(\mathcal{N})\subseteq \NB(\mathcal{F}_1(\mathcal{N}))$.
\end{proof}

\section{More about the property of Kelley}

In this section, the show some extra properties related to a continuum with the property of Kelley and its hyperspace of nonblockers. The main result is Theorem~\ref{7uhbcs4} when we show that if $X$ is decomposable with the property of Kelley, $\NB(\mathcal{F}_1(X))$ is a continuum and $\mathcal{F}_1(X)\subseteq\NB(\mathcal{F}_1(X))$, then $X$ is a aposyndetic.

\begin{lemma}\label{elinutil}
Let $X$ be a decomposable continuum with the property of Kelley. Let $A \in \mathcal{C}(X)$ and let $x,y \in X \setminus A$ be such that $\kappa_{X \setminus A}(x) \cap \kappa_{X \setminus A}(y)= \emptyset$. If  $\kappa_{X \setminus A}(x) \cap \Cl(\kappa_{X \setminus A}(y)) \neq \emptyset$, then $\kappa_{X \setminus A}(x) \subseteq \Cl(\kappa_{X \setminus A}(y))$.
\end{lemma}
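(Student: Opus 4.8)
The plan is to fix a point $p\in \kappa_{X\setminus A}(x)\cap \Cl(\kappa_{X\setminus A}(y))$ and to show that an arbitrary $q\in\kappa_{X\setminus A}(x)$ belongs to $\Cl(\kappa_{X\setminus A}(y))$. The first thing I would record is that $\kappa_{X\setminus A}(x)$ is connected by continua: given $a,b\in\kappa_{X\setminus A}(x)$, there are subcontinua $L_a,L_b$ of $X\setminus A$ with $x\in L_a\cap L_b$, $a\in L_a$ and $b\in L_b$, so $L_a\cup L_b$ is a subcontinuum of $X\setminus A$ containing $a$ and $b$. Applying this to $p$ and $q$ yields a subcontinuum $C$ of $X\setminus A$ with $\{p,q\}\subseteq C$.

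Since $p\in \Cl(\kappa_{X\setminus A}(y))$, I would write $p=\lim_{n\to\infty}y_n$ with $y_n\in\kappa_{X\setminus A}(y)$, and then invoke the property of Kelley for the sequence $(y_n)$, its limit $p$, and the subcontinuum $C$ with $p\in C$. This produces subcontinua $C_n$ of $X$ with $y_n\in C_n$ for each $n$ and $\lim_{n\to\infty}C_n=C$. The crucial step is to upgrade this Hausdorff convergence to the containment $C_n\subseteq X\setminus A$ for large $n$. Because $A$ is a continuum it is compact, and $C$ is a compact set disjoint from $A$, so $\delta=\inf\{d(c,a):c\in C,\ a\in A\}>0$; for $n$ large enough that $\Ha(C_n,C)<\delta$ we get $C_n\subseteq N(C,\delta)\subseteq X\setminus A$.

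For each such large $n$, $C_n$ is a subcontinuum of $X\setminus A$ with $y_n\in C_n\cap\kappa_{X\setminus A}(y)$; joining $C_n$ to a subcontinuum of $X\setminus A$ through $y_n$ and $y$ shows $C_n\subseteq\kappa_{X\setminus A}(y)$. Since $C_n\subseteq\kappa_{X\setminus A}(y)$ for all large $n$ and $C_n\to C$, every point of $C=\liminf_{n\to\infty}C_n$ is a limit of points of $\kappa_{X\setminus A}(y)$, whence $C\subseteq\Cl(\kappa_{X\setminus A}(y))$. In particular $q\in\Cl(\kappa_{X\setminus A}(y))$, and as $q$ was arbitrary we conclude $\kappa_{X\setminus A}(x)\subseteq\Cl(\kappa_{X\setminus A}(y))$.

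I expect the only real obstacle to be the containment $C_n\subseteq X\setminus A$: the property of Kelley delivers the approximating continua $C_n$ only up to Hausdorff convergence, with no a priori control on whether they avoid $A$, so one must exploit the strict positivity of the distance between the disjoint compacta $C$ and $A$ to push the tail of $(C_n)$ off $A$. It is worth noting that the hypothesis $\kappa_{X\setminus A}(x)\cap\kappa_{X\setminus A}(y)=\emptyset$ does not seem to enter this argument; only the fact that $A$ is a subcontinuum disjoint from $x$ and $y$ is used.
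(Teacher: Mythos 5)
Your proof is correct and follows essentially the same route as the paper's: both apply the property of Kelley to a sequence in $\kappa_{X\setminus A}(y)$ converging to a point of $\kappa_{X\setminus A}(x)\cap\Cl(\kappa_{X\setminus A}(y))$ along a subcontinuum of $X\setminus A$ joining two points of $\kappa_{X\setminus A}(x)$, and both force the approximating continua off $A$ (you via the positive distance between the disjoint compacta, the paper via the Vietoris neighborhood $\langle X\setminus A,U\rangle$), concluding that they lie in $\kappa_{X\setminus A}(y)$; the paper merely packages the argument as a contradiction with an open set $U$ missing $\Cl(\kappa_{X\setminus A}(y))$ instead of your direct $\liminf$ conclusion. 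Your closing remark is also accurate: the disjointness hypothesis plays no essential role, since if $\kappa_{X\setminus A}(x)\cap\kappa_{X\setminus A}(y)\neq\emptyset$ then the two sets coincide and the conclusion is trivial.
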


\begin{proof}
Let $A \in \mathcal{C}(X)$ and let $x,y \in X \setminus A$ be such that $\kappa_{X \setminus A}(x) \cap \kappa_{X \setminus A}(y)= \emptyset$ and  $\kappa_{X \setminus A}(x) \cap \Cl(\kappa_{X \setminus A}(y)) \neq \emptyset$. Let $z\in \kappa_{X \setminus A}(x) \cap \Cl(\kappa_{X \setminus A}(y))$. Suppose that $\kappa_{X\setminus A}(x)\setminus\Cl(\kappa_{X\setminus A}(y))\neq\emptyset$. Let $w\in \kappa_{X\setminus A}(x)\setminus\Cl(\kappa_{X\setminus A}(y))$. Since $\{z,w\}\subseteq \kappa_{X \setminus A}(x)$, there exists a continuum 
$L$ such that $\{z,w\}\subseteq L$ and $L\cap A=\emptyset$. Let $U$ be an open subset of $X$ such that $w\in U$ and $U\cap \Cl(\kappa_{X\setminus A}(y))=\emptyset$. Note that $L\in\langle X\setminus A,U\rangle$ where $\langle X\setminus A,U\rangle$ is an open subset of $2^X$. Since $z\in\Cl(\kappa_{X \setminus A}(y))\setminus  \kappa_{X \setminus A}(y)$, there exists a sequence $(z_n)_{n\in\mathbb{N}}$ in $\kappa_{X\setminus A}(y)$ such that $\lim_{n\to\infty}z_n=z$. Since $X$ has the property of Kelley, there exists a sequence of subcontinua $(L_n)_{n\in \mathbb{N}}$ of $X$ such that $z_n\in L_n$ for each $n\in\N$, and $\lim_{n\to\infty}L_n=L$. Hence, there exists $k\in\N$ such that $L_i\in\langle X\setminus A,U\rangle$ for each $i\geq k$. Let $m>k$. Hence, $L_m\cap \kappa_{X \setminus A}(y)\neq\emptyset$ and $L_m\cap A=\emptyset$. Thus, $L_m\subseteq \kappa_{X \setminus A}(y)$. This contradicts that $L_m\cap U\neq\emptyset$ and $U\cap \Cl(\kappa_{X\setminus A}(y))=\emptyset$. Therefore, $\kappa_{X \setminus A}(x) \subseteq \Cl(\kappa_{X \setminus A}(y))$.
\end{proof}

\begin{lemma}\label{lempre1}
Let $X$ be a continuum and let $(A_n)_{n\in\mathbb{N}}$ be a sequence of pairwise disjoint subcontinua of $X$ such that $\lim_{n\to\infty}A_n=A$ for some $A\in\mathcal{C}(X)$. Then, there exists a countable set $D$ such that $D\subseteq \bigcup_{n\in\mathbb{N}}A_n$, $|D\cap A_i|=1$ for each $i\in\mathbb{N}$, and $A\subseteq \Cl(D)$. 
\end{lemma}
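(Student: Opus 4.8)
The plan is to choose exactly one point $d_n$ from each $A_n$ and set $D=\{d_n:n\in\mathbb{N}\}$. Since the $A_n$ are pairwise disjoint, the chosen points are automatically distinct and $D\cap A_i=\{d_i\}$, so both $D\subseteq\bigcup_{n\in\mathbb{N}}A_n$ and $|D\cap A_i|=1$ hold for free; the disjointness hypothesis is used \emph{only} here. Thus the entire content of the lemma is to select the $d_n$ so that $A\subseteq\Cl(D)$.

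First I would record the one consequence of Hausdorff convergence that I actually need: since $\lim_{n\to\infty}A_n=A$, we have $A\subseteq\liminf A_n$; concretely, for each $a\in A$ and each $\varepsilon>0$ there is $N\in\mathbb{N}$ with $A_n\cap B(a,\varepsilon)\neq\emptyset$ for all $n\geq N$, because $A\subseteq N(A_n,\Ha(A_n,A))$ and $\Ha(A_n,A)\to 0$. Because $A$ is a compact metric space it is separable, so I fix a countable dense subset $\{a_k:k\in\mathbb{N}\}$ of $A$. It then suffices to arrange $a_k\in\Cl(D)$ for every $k$, since then $A=\Cl(\{a_k:k\in\mathbb{N}\})\subseteq\Cl(D)$.

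Next I would set up a bookkeeping of approximation demands. Enumerate the pairs via a bijection $\mathbb{N}\to\mathbb{N}\times\mathbb{N}$, writing $T_j=(k_j,m_j)$, where $T_j$ stands for the demand ``some chosen point lies within $1/m_j$ of $a_{k_j}$.'' By the previous paragraph there is $N_j\in\mathbb{N}$ such that $A_n\cap B(a_{k_j},1/m_j)\neq\emptyset$ for all $n\geq N_j$. I would then greedily assign to the demands pairwise distinct indices: processing $j=1,2,\dots$, choose $n_j\geq N_j$ with $n_j\notin\{n_1,\dots,n_{j-1}\}$, which is possible because only finitely many indices are excluded while infinitely many $n\geq N_j$ are available.

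Finally I would define the choice function: for $n=n_j$ put $d_n\in A_{n}\cap B(a_{k_j},1/m_j)$ (nonempty since $n_j\geq N_j$), and for every other $n$ let $d_n$ be an arbitrary point of $A_n$; this is unambiguous because the $n_j$ are distinct. Then for each $k$ and each $m$, writing $T_j=(k,m)$ gives $d(a_k,d_{n_j})<1/m$, so letting $m\to\infty$ yields $a_k\in\Cl(D)$; hence every $a_k$ lies in $\Cl(D)$ and $A\subseteq\Cl(D)$, finishing the proof. The only delicate point is the bookkeeping that simultaneously meets all countably many demands while spending a distinct index on each, but the greedy assignment handles this cleanly precisely because each demand is met by \emph{all} sufficiently large $n$.
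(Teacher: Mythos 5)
Your proof is correct and takes essentially the same route as the paper's: both fix a countable dense subset of $A$, use the Hausdorff convergence (i.e., $A\subseteq\liminf A_n$) to find points of the $A_n$ approximating each dense point, and then allocate indices so that each $A_n$ contributes exactly one point of $D$, with disjointness used only to guarantee $|D\cap A_i|=1$. The sole difference is bookkeeping: the paper partitions $\mathbb{N}$ into infinitely many pairwise disjoint infinite sets $\mathcal{N}_n$, one per dense point, while you greedily assign a fresh index $n_j$ to each (point, accuracy) demand --- two implementations of the same distribution scheme.
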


\begin{proof}
Let $E=\{e_n : n\in\mathbb{N}\}$ be a dense subset of $A$. For each $n\in\mathbb{N}$, let $(x_i^n)_{i\in\mathbb{N}}$ be a sequence such that $x_i^n\in A_i$ for each $i\in\mathbb{N}$, and $\lim_{i\to\infty}x_i^n=e_n$ for each $n\in\mathbb{N}$. Let $\mathcal{N}_1, \mathcal{N}_2 \ldots$ be a sequence of pairwise disjoint subsets of $\mathbb{N}$ such that $\mathcal{N}_n$ is infinite for each $n\in\mathbb{N}$ and $\mathbb{N}=\bigcup_{n\in\mathbb{N}}\mathcal{N}_n$. Let $D=\bigcup_{n\in\mathbb{N}}D_n$ where $D_n=\{x_i^n : i\in\mathcal{N}_n\}$. It is not difficult to see that $D$ satisfies all the conditions in our lemma. 
\end{proof}

\begin{lemma}\label{lemyfhdu87}
Let $X$ be a continuum and let $A$ be a proper subcontinuum of $X$ such that $X\setminus A$ is not connected by continua. Let $B\in 2^X\setminus\mathcal{C}(X)$ be such that $\Int(B)=\emptyset$ and $B\subseteq X\setminus A$. Suppose that $B$ satisfies the following conditions:
\begin{enumerate}
    \item $D\in\NB(\mathcal{F}_1(X))$ for every component $D$ of $B$;
    \item $\Cl(\kappa_{X\setminus A}(x))\cap \kappa_{X\setminus A}(x')=\emptyset$ for every $x,x'\in X\setminus A$ such that $\kappa_{X\setminus A}(x)\cap \kappa_{X\setminus A}(x')=\emptyset$ $\Cl(\kappa_{X\setminus A}(x))\cap B\neq\emptyset$ and $\Cl(\kappa_{X\setminus A}(x'))\cap B\neq\emptyset$; and
    \item $B\cap \kappa_{X\setminus A}(x)$ is a component of $B$ for each $x\in X\setminus A$ such that $\kappa_{X\setminus A}(x)\cap B\neq\emptyset$.
\end{enumerate}
Then, $B\in \mathcal{NB}(\mathcal{F}_1(X))$.
\end{lemma}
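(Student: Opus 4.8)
The plan is to prove the slightly stronger statement that \emph{every point of $X\setminus B$ can be joined to $A$ by a subcontinuum of $X\setminus B$}, and to deduce the lemma from it. Indeed, if this holds then any two points of $X\setminus B$ are joined by a subcontinuum of $X\setminus B$ (concatenate the two joining continua through the connected set $A$), so $\kappa_{X\setminus B}(w)=X\setminus B$ for every $w\in X\setminus B$; since $\Int(B)=\emptyset$ the set $X\setminus B$ is dense, hence $B$ does not block $\{w\}$ for any $w\in X\setminus B$, i.e. $B\in\NB(\mathcal{F}_1(X))$.

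First I would set up the partition of $X\setminus A$ into its components by continua: for $x\in X\setminus A$ write $C(x)=\kappa_{X\setminus A}(x)$. Two preliminary facts are needed. (a) Each $\Cl(C(x))$ is a subcontinuum of $X$ meeting $A$: writing $C(x)=\bigcup_n K_n$ as in Lemma~\ref{lemma0} (with $A$ and $\{x\}$ in the roles of $A$ and $B$), each $K_n$ is a component of $X\setminus N(A,1/n)$, so boundary bumping produces points of $K_n$ within $1/n$ of $A$, whose limit lies in $\Cl(C(x))\cap A$. (b) From hypotheses 2 and 3, $\Cl(C(x))\cap B=B\cap C(x)$ for every $x$; that is, the closure of a class can meet $B$ only in that class's own component. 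This is where conditions 2 and 3 do their work: if some $b\in\Cl(C(x))\cap B$ lay outside $C(x)$, then $b\in C(x')$ for a class $x'\neq x$, and since both $\Cl(C(x))$ and $\Cl(C(x'))$ meet $B$, condition 2 would give $\Cl(C(x))\cap C(x')=\emptyset$, contradicting $b\in\Cl(C(x))\cap C(x')$. As a consequence, for a class $C(x_0)$ with $B\cap C(x_0)\neq\emptyset$ we have $\Cl(C(x_0))\neq X$: otherwise $B=\Cl(C(x_0))\cap B=B\cap C(x_0)$ would, by condition 3, be a single component of $B$, contradicting $B\in 2^X\setminus\mathcal{C}(X)$.

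Now I would treat $w\in X\setminus B$ in cases. If $w\in A$ there is nothing to prove. If $w\in C(x')$ with $C(x')\cap B=\emptyset$, then by (b) the continuum $\Cl(C(x'))\cup A$ avoids $B$ and meets $A$, joining $w$ to $A$. The decisive case is $w\in C(x_0)$ with $D_0:=B\cap C(x_0)\neq\emptyset$ and $w\notin D_0$. Here I would invoke hypothesis 1: $D_0\in\NB(\mathcal{F}_1(X))$, so $\kappa_{X\setminus D_0}(w)$ is dense, and by Lemma~\ref{lemma0} it equals $\bigcup_n K_n$ with $K_n$ an increasing sequence of continua avoiding $D_0$. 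Since $\Cl(C(x_0))\neq X$, density forces some $K_n$ to leave $\Cl(C(x_0))$; note also that such a $K_n$ must meet $A$, for if $K_n\cap A=\emptyset$ then the connected set $K_n\ni w$ would lie in $\kappa_{X\setminus A}(w)=C(x_0)$. Let $N_0$ be the component of $K_n\cap\Cl(C(x_0))$ containing $w$. By (b), $N_0$ avoids $B$, and by boundary bumping in the continuum $K_n$ it meets $\Bd(\Cl(C(x_0)))$ at some point $q$.

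The main obstacle is controlling the exit point $q$, and this is precisely what conditions 2 and 3 are designed to handle. By construction $q\in\Cl(C(x_0))$ is a limit of points of $X\setminus\Cl(C(x_0))$. If $q\in A$, then $N_0\cup A$ is a $B$-avoiding continuum joining $w$ to $A$. If $q\in C(x_q)$ with $x_q\neq x_0$, then $C(x_q)$ must be $B$-free: otherwise both $\Cl(C(x_0))$ and $\Cl(C(x_q))$ meet $B$ and condition 2 gives $\Cl(C(x_0))\cap C(x_q)=\emptyset$, contradicting $q\in\Cl(C(x_0))\cap C(x_q)$; then $N_0\cup\Cl(C(x_q))\cup A$ is the desired $B$-avoiding continuum. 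The subtle residual situation, which I expect to require the most care, is $q\in C(x_0)$ arising as an accumulation of \emph{distinct} classes onto $C(x_0)$, where no single class closure captures $q$. Here I would argue by contradiction: assuming no $B$-avoiding continuum joins $w$ to $A$, one shows $\kappa_{X\setminus B}(w)\subseteq C(x_0)$, and then leverages the density of $\kappa_{X\setminus D_0}(w)$ together with (b) and condition 2 to conclude that the only way $K_n$ can escape $\Cl(C(x_0))$ is through $A$ or a $B$-free class, so that $\kappa_{X\setminus D_0}(w)$ would be trapped in $\Cl(C(x_0))\neq X$, contradicting its density.
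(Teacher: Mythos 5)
Your skeleton coincides with the paper's own proof: join every point $w\in X\setminus B$ to $A$ by a subcontinuum of $X\setminus B$; prove $\Cl(\kappa_{X\setminus A}(x))\cap B=\kappa_{X\setminus A}(x)\cap B$ from condition \textit{2} (the paper's equation (4.1), your fact (b)); in the decisive case use condition \textit{1} to get density of $\kappa_{X\setminus D_0}(w)$, pick a continuum $K_n\ni w$ avoiding $D_0$ but not contained in $\Cl(C(x_0))$, note $K_n\cap A\neq\emptyset$, and pass to the component $N_0$ of $K_n\cap\Cl(C(x_0))$ containing $w$. Up to that point everything is correct and matches the paper. The gap is the final step. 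Boundary bumping only gives you a point $q\in N_0\cap\Bd(\Cl(C(x_0)))$, and your residual case --- $q\in C(x_0)$ arising as an accumulation of infinitely many distinct classes --- is genuinely possible and is not closed by your sketch. The sketched contradiction does not work: members of $\kappa_{X\setminus D_0}(w)$ are only required to avoid the single component $D_0$ of $B$, so they may freely cross other components of $B$ and may meet $A$; in particular your own $K_n$ escapes $\Cl(C(x_0))$, so $\kappa_{X\setminus D_0}(w)$ is certainly not ``trapped'' in $\Cl(C(x_0))$, and density yields no contradiction. Moreover, reasoning about ``the only way $K_n$ can escape'' presumes an arc-like crossing structure that an arbitrary continuum does not have, so there is no usable sense in which the escape must pass through $A$ or through a $B$-free class.

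The paper closes exactly this point with a maximality argument that makes your entire case analysis on $q$ unnecessary: the component $N_0$ itself already meets $A$. Indeed, if $N_0\cap A=\emptyset$, then $N_0$ is a subcontinuum of the continuum $K_n$ contained in the open (in $K_n$) set $K_n\setminus A$, which is a proper subset of $K_n$ since $K_n\cap A\neq\emptyset$; by the standard corollary to boundary bumping (\cite[Corollary~5.5]{Nadler-Libro1}) there is a subcontinuum $J$ of $K_n\setminus A$ with $N_0\subseteq J$ and $J\neq N_0$. But $J$ is a continuum containing $w$ and disjoint from $A$, so $J\subseteq\kappa_{X\setminus A}(w)=C(x_0)\subseteq\Cl(C(x_0))$, whence $J$ is a connected subset of $K_n\cap\Cl(C(x_0))$ properly containing the component $N_0$ --- a contradiction. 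Hence $N_0\cap A\neq\emptyset$, and since $N_0\cap B=\emptyset$ (as you showed via fact (b)), the continuum $N_0$ joins $w$ to $A$ inside $X\setminus B$; your concatenation through $A$ then finishes the proof. In short, your proposal is the paper's proof minus its one decisive trick: replace the exit-point case analysis by the paragraph above and the argument is complete.
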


\begin{proof}
We prove that $X\setminus B$ is connected by continua. To this end, we show that for every $x\in X\setminus B$, there exists a subcontinuum $L_x$ of $X\setminus B$ such that $x\in L_x$ and $L_x\cap A\neq\emptyset$. Let $x\in X\setminus B$. If $x\in A$, then we define $L_x=A$ and clearly $x\in L_x$, $L_x\cap B=\emptyset$ and $L_x\cap A\neq\emptyset$. Hence, we suppose that $x\in X\setminus A$. Note that if $\Cl(\kappa_{X\setminus A}(x))\cap B=\emptyset$, then $L_x=\Cl(\kappa_{X\setminus A}(x))$ is a continuum satisfying the conditions. Thus, assume that $\Cl(\kappa_{X\setminus A}(x))\cap B\neq\emptyset$. Let $z\in \Cl(\kappa_{X\setminus A}(x))\cap B$. It is clear that $\Cl(\kappa_{X\setminus A}(z))\cap B\neq\emptyset$. Thus, by \textit{2}, $\kappa_{X\setminus A}(x)\cap \kappa_{X\setminus A}(z)\neq\emptyset$ and $\kappa_{X\setminus A}(x)=\kappa_{X\setminus A}(z)$. Observe that we showed that 
\begin{equation}\label{eq4b4}
    \Cl(\kappa_{X\setminus A}(x))\cap B=\kappa_{X\setminus A}(x)\cap B.
\end{equation}
Furthermore, $\kappa_{X\setminus A}(x)\cap B$ is a component of $B$, by \textit{3}. Since $B$ is not connected, there exists $b\in B$ such that  $b\in X\setminus \Cl(\kappa_{X\setminus A}(x))$ (see (\ref{eq4b4})). Hence, $X\setminus \Cl(\kappa_{X\setminus A}(x))\neq\emptyset$. Let $U=X\setminus \Cl(\kappa_{X\setminus A}(x))$ and let $D=\kappa_{X\setminus A}(x)\cap B$. We know that $D\in \NB(\mathcal{F}_1(X))$, by \textit{1}. Since $x\notin D$, there exists a subcontinuum $H$ of $X\setminus D$ such that $x\in H$ and $H\cap U\neq\emptyset$. Note that $H\cap A\neq\emptyset$. Let $L_x$ be the component of $H\cap \Cl(\kappa_{X\setminus A}(x))$ such that $x\in L_x$. If $L_x\cap A=\emptyset$, then there exists a subcontinuum $J$ of $H\setminus A$ such that $L_x\subseteq J$ and $J\neq L_x$, by \cite[Corollary~5.5]{Nadler-Libro1}. Thus, $J\subseteq \kappa_{X\setminus A}(x)$ contradicting that $L_x$ is a component of $H\cap \Cl(\kappa_{X\setminus A}(x))$; i.e., $L_x\cap A\neq\emptyset$. Since $H\cap D=\emptyset$ and $L_x\subseteq H$, $L_x\cap D=\emptyset$. Furthermore, $L_x\subseteq \Cl(\kappa_{X\setminus A}(x))$ and hence, $L_x\cap B=\emptyset$, by (\ref{eq4b4}). Therefore, for every $x\in X\setminus B$, there exists a subcontinuum $L_x$ of $X\setminus B$ such that $x\in L_x$ and $L_x\cap A\neq\emptyset$. 

If $x,y\in X\setminus B$, then there exist subcontinua $L_x$ and $L_y$ of $X\setminus B$ such that $x\in L_x, y\in L_y$, $L_x\cap A\neq\emptyset$ and $L_y\cap A\neq\emptyset$. Thus, $L_x\cup A\cup L_y$ is a proper subcontinuum of $X\setminus B$ containing both $x$ and $y$. Therefore, $X\setminus B$ is connected by continua and $B\in\NB(\mathcal{F}_1(X))$.
\end{proof}

\begin{theorem}\label{treokd8g}
Let $X$ be a decomposable continuum with the property of Kelley such that $\mathcal{F}_1(X)\subseteq \NB(\mathcal{F}_1(X))$ and $\NB(\mathcal{F}_1(X))$ is compact. If $A\in \mathcal{C}(X)$ is such that $\kappa_{X\setminus A}(w)$ is dense for some $w\in X\setminus A$, then there exists $L\in \NB(\mathcal{F}_1(X))$ such that $A\subseteq L$.
\end{theorem}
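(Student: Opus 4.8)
The plan is to reduce the statement to a clean construction of an increasing family of thickened continua and then identify the empty-interior situation as the genuine difficulty. First note that the hypothesis already forces $\Int(A)=\emptyset$: since $\kappa_{X\setminus A}(w)\subseteq X\setminus A$ is dense, $X\setminus A$ is dense, so $A$ has empty interior (this is necessary, because every element of $\NB(\mathcal{F}_1(X))$ must have empty interior). I claim it suffices to build continua $(H_n)_{n\in\mathbb{N}}$ with $H_n\cap A=\emptyset$, $H_n\subseteq\Int(H_{n+1})$, and $\bigcup_n H_n$ dense in $X$, and then set $L=X\setminus\bigcup_n\Int(H_n)$. Indeed, $L$ is closed; $A\subseteq L$ because $A\cap\Int(H_n)=\emptyset$ for all $n$; and $X\setminus L=\bigcup_n\Int(H_n)\supseteq\bigcup_n H_n$ is dense, so $\Int(L)=\emptyset$. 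Finally, if $x\in X\setminus L$ then $x\in\Int(H_N)$ for some $N$, and for every $n\geq N$ the continuum $H_n$ contains $x$ and satisfies $H_n\subseteq\Int(H_{n+1})\subseteq X\setminus L$; hence $\kappa_{X\setminus L}(x)\supseteq\bigcup_{n\geq N}H_n=\bigcup_n H_n$ is dense. Thus $L\in\NB(\mathcal{F}_1(X))$ and $A\subseteq L$, so the whole theorem is reduced to producing such a family $(H_n)$.

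To construct $(H_n)$ I would start from Lemma~\ref{lemma0}, writing $\kappa_{X\setminus A}(w)=\bigcup_n K_n$ with $K_n$ an increasing sequence of subcontinua of $X\setminus A$ all containing $w$; since the union is dense one checks $\Ha(K_n,X)\to 0$, i.e.\ $K_n\to X$. The engine is then Theorem~\ref{theokelley}: whenever we already have a continuum $H_n\subseteq X\setminus A$ with $w\in H_n$ and $\Int(H_n)\neq\emptyset$, the set $H_n\cup K_{n+1}$ is again a continuum in $X\setminus A$ with nonempty interior (both pieces contain $w$), so Theorem~\ref{theokelley} supplies a continuum $H_{n+1}$ with $H_n\cup K_{n+1}\subseteq\Int(H_{n+1})\subseteq H_{n+1}\subseteq X\setminus A$. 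This gives $H_n\subseteq\Int(H_{n+1})$, keeps $w\in H_{n+1}$ with $\Int(H_{n+1})\neq\emptyset$, and forces $\bigcup_n H_n\supseteq\bigcup_n K_n$ to be dense, exactly the properties needed above. The induction therefore goes through the moment we can furnish a single base continuum $H_1\subseteq X\setminus A$ with $w\in H_1$ and $\Int(H_1)\neq\emptyset$: given any subcontinuum $J\subseteq X\setminus A$ with $\Int(J)\neq\emptyset$ lying in the region $\kappa_{X\setminus A}(w)$, one joins $J$ to $w$ by a continuum $E\subseteq X\setminus A$ (possible by definition of $\kappa$) and takes $H_1=J\cup E\cup K_1$.

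The main obstacle is precisely the existence of that base continuum, i.e.\ the case in which \emph{no} subcontinuum of $X\setminus A$ meeting $\kappa_{X\setminus A}(w)$ has nonempty interior; then all the $K_n$ are nowhere dense, $\kappa_{X\setminus A}(w)$ is a dense meager set, and Theorem~\ref{theokelley} is unavailable because it requires interior. This is the hard part, and it is where the hypotheses $\mathcal{F}_1(X)\subseteq\NB(\mathcal{F}_1(X))$ and compactness of $\NB(\mathcal{F}_1(X))$ must do the work, in concert with the three lemmas immediately preceding the theorem. My plan for this case is to extract from the nested family a sequence of \emph{pairwise disjoint} subcontinua converging to $A$ and feed it into Lemma~\ref{lempre1}, obtaining a countable set $D$ with $A\subseteq\Cl(D)$ and exactly one point in each of these continua. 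I would then verify that a suitable closed set $B$ built from $D$ satisfies the criterion of Lemma~\ref{lemyfhdu87}: condition (1) is delivered for free by $\mathcal{F}_1(X)\subseteq\NB(\mathcal{F}_1(X))$ (the one-point components are nonblockers), while conditions (2) and (3) are arranged using Lemma~\ref{elinutil} to control the closures $\Cl(\kappa_{X\setminus A}(\cdot))$ of the disjoint $\kappa$-pieces. This yields a nonblocker whose closure contains $A$, and compactness of $\NB(\mathcal{F}_1(X))$ guarantees that the limiting object remains in $\NB(\mathcal{F}_1(X))$, producing the desired $L\supseteq A$. I expect the delicate bookkeeping in this empty-interior case — matching the components of $B$ with the $\kappa$-classes and checking the separation hypotheses (2),(3) of Lemma~\ref{lemyfhdu87} — to be the crux of the argument.
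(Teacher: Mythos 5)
Your reduction and your ``Case 1'' are correct: when some subcontinuum of $X\setminus A$ lying in the dense class $\kappa_{X\setminus A}(w)$ has nonempty interior, the thickening construction via Theorem~\ref{theokelley} (in the style of Lemma~\ref{lemdyfh6y}) produces the increasing family $(H_n)_{n\in\mathbb{N}}$, and your verification that $L=X\setminus\bigcup_n\Int(H_n)$ is a nonblocker containing $A$ is sound (note it does not even use compactness of $\NB(\mathcal{F}_1(X))$ or $\mathcal{F}_1(X)\subseteq\NB(\mathcal{F}_1(X))$). But in the hard case --- which you rightly isolate as the crux --- your plan has a genuine gap at its very first step: ``extract from the nested family a sequence of pairwise disjoint subcontinua converging to $A$'' cannot be performed. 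The family $(K_n)_{n\in\mathbb{N}}$ from Lemma~\ref{lemma0} consists of continua all containing $w$, so no two of its members are disjoint, and nothing in your setup supplies pairwise disjoint continua accumulating on $A$; the hypotheses give you a dense class, i.e.\ structure \emph{far} from $A$, not disjoint continua \emph{near} $A$, and there is no reason such continua should exist. This is not a bookkeeping issue deferred to Lemmas~\ref{elinutil} and \ref{lemyfhdu87}; it is the missing input on which your whole Case 2 rests. (Your final step would otherwise be fine: if $R_m$ is an increasing sequence of finite nonblockers with union $D$, then $\lim R_m=\Cl(D)\in\NB(\mathcal{F}_1(X))$ by compactness, and $A\subseteq\Cl(D)$ would finish the proof --- but the disjoint continua feeding Lemma~\ref{lempre1} are exactly what you do not have.)

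The paper's proof does not case-split and is organized around an object absent from your proposal: it defines $L=A\cup\{y\in X\setminus A : \Cl(\kappa_{X\setminus A}(y))\neq X\}$, the union of $A$ with everything $A$ blocks. By Lemma~\ref{elinutil}, $L$ is a union of class-closures each meeting $A$, hence connected; it misses the dense class $\kappa_{X\setminus A}(w)$, hence is nowhere dense; and every point outside $L$ has dense class, so once $L$ is shown to be \emph{closed} it is automatically an element of $\NB(\mathcal{F}_1(X))$ containing $A$. All of the machinery you tried to deploy constructively is used there only inside a proof by contradiction that $L$ is closed: if $x_n\to x$ with $x_n\in\kappa_{X\setminus A}(y_n)$ in closure-separated classes (separation arranged via Lemma~\ref{elinutil}) and $x\notin L$, then $\kappa_{X\setminus A}(x)$ is dense, so for any basic Vietoris neighborhood of $X$ one finds a continuum $M\ni x$ in that neighborhood with $M\cap A=\emptyset$; the property of Kelley yields continua $M_n\ni x_n$ converging to $M$, and these $M_n$ --- not continua converging to $A$ --- are the pairwise disjoint continua fed into Lemma~\ref{lempre1}, since they lie in the disjoint classes $\kappa_{X\setminus A}(y_n)$. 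Lemma~\ref{lemyfhdu87} together with $\mathcal{F}_1(X)\subseteq\NB(\mathcal{F}_1(X))$ makes the finite transversals nonblockers, and compactness of $\NB(\mathcal{F}_1(X))$ then forces $X$ to be a limit of nonblockers, contradicting $X\notin\NB(\mathcal{F}_1(X))$. So the disjoint continua converge to a continuum Vietoris-close to $X$, and the conclusion drawn is a contradiction, not a nonblocker containing $A$. To repair your proposal you would need either to justify the existence of disjoint continua accumulating on $A$ in the empty-interior case (unlikely in general) or to adopt the paper's definition of $L$ and prove its closedness, which is where the actual work of the theorem lies.
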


\begin{proof}
Let $A\in \mathcal{C}(X)$ and let $w\in X\setminus A$ be such that $\kappa_{X\setminus A}(w)$ is dense in $X$. If $A\in\NB(\mathcal{F}_1(X))$, then $L=A$. Hence, suppose that $A\in \mathcal{C}(X)\setminus \mathcal{NB}(\mathcal{F}_1(X))$. Let $y\in X\setminus A$ be such that $\Cl(\kappa_{X\setminus A}(y))\neq X$. We define 
\begin{equation}\label{eq5t}
    L=A\cup \{y\in X\setminus A : \Cl(\kappa_{X\setminus A}(y))\neq X\}.
\end{equation}
We see that $\Cl(\kappa_{X\setminus A}(y))\subseteq L$ for each $y\in L\setminus A$. Let $y\in L\setminus A$ and let $z\in \Cl(\kappa_{X\setminus A}(y))$. Note that if $z\in \Cl(\kappa_{X\setminus A}(y))\setminus A$, then $\kappa_{X\setminus A}(z)\subseteq \Cl(\kappa_{X\setminus A}(y))$, by Lemma~\ref{elinutil}. Hence, $\Cl(\kappa_{X\setminus A}(z))\neq X$ and $z\in L$. Therefore, $\Cl(\kappa_{X\setminus A}(y))\subseteq L$ for each $y\in L\setminus A$.

\smallskip

%Since $\kappa_{X\setminus A}(x)$ is dense and $\kappa_{X\setminus A}(x)\cap L=\emptyset$, we have that $X\setminus L$ is dense. 
We prove that $L$ is a continuum. Since $\Cl(\kappa_{X\setminus A}(y))\cap A\neq\emptyset$ for each $y\in L\setminus A$, and $L=A\cup (\bigcup\{\Cl(\kappa_{X\setminus A}(y)) : y\in L\setminus A\})$, we have that $L$ is connected. We show that $L$ is closed. Let $(x_n)_{n\in\mathbb{N}}$ be a sequence of $L$ such that $\lim_{n\to\infty}x_n=x$ for some $x\in X$. We see that $x\in L$. Observe that if there exist $y_1, y_2, \ldots, y_k$ in $L\setminus A$ such that $$\mathcal{N}=\left\{n\in\mathbb{N} : x_n\in A\cup \Cl(\kappa_{X\setminus A}(y_1))\cup \Cl(\kappa_{X\setminus A}(y_2))\cup\cdots\cup\Cl(\kappa_{X\setminus A}(y_k))\right\}$$ is infinite, then $(x_n)_{n\in\mathcal{N}}$ converges to some point of $A\cup \Cl(\kappa_{X\setminus A}(y_1))\cup \Cl(\kappa_{X\setminus A}(y_2))\cup\cdots\cup\Cl(\kappa_{X\setminus A}(y_k))$. Hence, $x\in L$. Thus, by Lemma~\ref{elinutil}, we may assume that there exists a sequence $(y_n)_{n\in\mathbb{N}}$ of $L\setminus A$ such that $x_n\in\kappa_{X\setminus A}(y_n)$ for each $n\in\mathbb{N}$, and 
\begin{equation}\label{eq5t5}
    \Cl(\kappa_{X\setminus A}(y_i))\cap\kappa_{X\setminus A}(y_j)=\emptyset\text{ whenever that }i\neq j.
\end{equation}
Suppose that $x\notin L$. We will have a contradiction, showing that $X\in \NB(\mathcal{F}_1(X))$. Let $U_1,\ldots, U_m$ be open subsets of $X$ such that $X\in\langle U_1,\ldots, U_m \rangle$. Since $x\notin L$, $\kappa_{X\setminus A}(x)$ is dense. Thus, there exists a subcontinuum $M$ of $X\setminus A$ such that $x\in M$ and $M\in \langle U_1,\ldots, U_m \rangle$. Notice that $M\in \langle U_1,\ldots, U_m \rangle\cap \langle X\setminus A\rangle$. Since $X$ has the property of Kelley, there exists a sequence of subcontinua $(M_n)_{n\in\mathbb{N}}$ of $X$ such that $x_n\in M_n$ for each $n\in\mathbb{N}$, and $\lim_{n\to\infty}M_n=M$. Hence, there exists $k\in\N$ such that $M_n\in \langle U_1,\ldots, U_m \rangle\cap \langle X\setminus A\rangle$ for each $n\geq k$. Since $M_n\cap A=\emptyset$ and $x_n\in M_n$, we have that $M_n\subseteq \kappa_{X\setminus A}(y_n)$ for each $n\geq k$. Thus, $(M_n)_{n\geq k}$ is a sequence of pairwise disjoint subcontinua of $X$, by (\ref{eq5t5}). By Lemma~\ref{lempre1}, there exists a countable set $D\subseteq \bigcup_{n\geq k}M_n$ such that $|D\cap M_n|=1$ for each $n\geq k$ and $M\subseteq \Cl(D)$. We know that $\mathcal{F}_1(X)\subseteq \NB(\mathcal{F}_1(X))$. If $F$ is a finite subset of $D$, then $F\in\NB(\mathcal{F}_1(X))$, by Lemma~\ref{lemyfhdu87}. Let $R_m=(\bigcup_{k\leq i\leq m}M_i)\cap D$, for each $m\geq k$. Note that $R_n\in \NB(\mathcal{F}_1(X))$ for each $n\geq m$. Furthermore, since $\NB(\mathcal{F}_1(X))$ is compact, we may suppose that $\lim_{m\to\infty}R_m=Q$ for some $Q\in \NB(\mathcal{F}_1(X))$. Since $M\subseteq \Cl(D)$ and $\Cl(D)\subseteq Q$, $M\subseteq Q$. Thus, $Q\in \langle U_1,\ldots, U_m \rangle$. Therefore, $X$ is a limit point of $\NB(\mathcal{F}_1(X))$. This contradicts the compactness of $\NB(\mathcal{F}_1(X))$ and the fact that $X\notin \NB(\mathcal{F}_1(X))$. We have that $L$ is a subcontinuum of $X$.

Finally, $\kappa_{X\setminus A}(x)$ is dense and $L\cap \kappa_{X\setminus A}(x)=\emptyset$. Hence, $\Int(L)=\emptyset$. Furthermore, $\kappa_{X\setminus A}(z)$ is dense, for every $z\in X\setminus L$ (see definition of $L$ in (\ref{eq5t})). Therefore, $L\in \NB(\mathcal{F}_1(X))$ and $A\subseteq L$.
\end{proof}

\begin{theorem}\label{tr645463}
Let $X$ be a decomposable continuum with the property of Kelley such that $\mathcal{F}_1(X)\subseteq \NB(\mathcal{F}_1(X))$ and $\NB(\mathcal{F}_1(X))$ is compact. If $A\in \NB(\mathcal{F}_1(X))$, then $X\setminus A$ is connected by continua.
\end{theorem}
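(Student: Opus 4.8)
The plan is to work directly from the reformulation of the conclusion. Since $A\in\NB(\mathcal{F}_1(X))$, each $\kappa_{X\setminus A}(x)$ with $x\in X\setminus A$ is dense in $X$, and by the observation following the definition of \emph{connected by continua} it suffices to show $\kappa_{X\setminus A}(x)=X\setminus A$ for every $x\in X\setminus A$. Declaring $p\sim q$ when some subcontinuum of $X\setminus A$ contains both $p$ and $q$, the classes of $\sim$ are precisely the sets $\kappa_{X\setminus A}(x)$, and the task is to rule out two distinct classes. So I would assume, toward a contradiction, that there are $x,y\in X\setminus A$ with $y\notin\kappa_{X\setminus A}(x)$; then $\kappa_{X\setminus A}(x)\cap\kappa_{X\setminus A}(y)=\emptyset$ (otherwise a shared point would splice the two pieces into a single continuum of $X\setminus A$ through $x$ and $y$), while both sets are dense in $X$.

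First I would dispose of the case in which some class has nonempty interior: if $U$ is a nonempty open set contained in $\kappa_{X\setminus A}(x)$, then density of $\kappa_{X\setminus A}(y)$ forces $\kappa_{X\setminus A}(y)\cap U\neq\emptyset$, contradicting disjointness of the two classes. Hence, under the standing assumption, every class is nowhere dense. By Lemma~\ref{lemma0} each class is a countable increasing union $\bigcup_n K_n$ of subcontinua (the components of $X\setminus N(A,1/n)$ through the base point), each $K_n$ nowhere dense, with $K_n\to X$ in the Hausdorff metric. In particular, if there were only countably many classes, then $X$ would be the union of the nowhere dense set $A$ with countably many meager classes, contradicting the Baire category theorem; so one is forced into the regime of uncountably many nowhere dense classes.

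The heart of the argument is to contradict this nowhere-dense regime, and here I would reproduce the mechanism of Theorem~\ref{treokd8g}: exhibit a sequence in $\NB(\mathcal{F}_1(X))$ converging to $X$, contradicting the compactness of $\NB(\mathcal{F}_1(X))$ together with $X\notin\NB(\mathcal{F}_1(X))$. Concretely, using the property of Kelley to transport the large continua $K_n\subseteq\kappa_{X\setminus A}(x)$ onto continua issuing from points of $\kappa_{X\setminus A}(y)$, I would manufacture a sequence of pairwise disjoint subcontinua converging to a $\tfrac1m$-dense continuum; then apply Lemma~\ref{lempre1} to extract a countable transversal $D$ with the same point as its closure target, use $\mathcal{F}_1(X)\subseteq\NB(\mathcal{F}_1(X))$ and Lemma~\ref{lemyfhdu87} to conclude that finite subsets of $D$ are nonblockers, and finally take Hausdorff limits of increasing finite unions to obtain elements of $\NB(\mathcal{F}_1(X))$ arbitrarily close to $X$.

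The step I expect to be the main obstacle is exactly this last, nowhere-dense case. The difficulty is that, in contrast to Theorem~\ref{treokd8g}, \emph{all} classes here are dense, so their closures all equal $X$ and the separation hypotheses of Lemmas~\ref{elinutil} and~\ref{lemyfhdu87} — which demand $\Cl(\kappa_{X\setminus A}(x))\cap\kappa_{X\setminus A}(x')=\emptyset$ — fail globally. To feed the machinery I would have to localize: work at a fixed level $n$ with the genuinely separated components $K_n$ and $K_n'$ of $X\setminus N(A,1/n)$ containing $x$ and $y$ respectively (disjoint closed sets, hence admitting the required separation), realize the pairwise disjoint continua and the disconnected set $B$ of Lemma~\ref{lemyfhdu87} inside such a level, and only then let $n\to\infty$. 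Controlling this localization while keeping the resulting nonblockers close to $X$ is the technical core of the proof, and it is where the compactness of $\NB(\mathcal{F}_1(X))$, the property of Kelley, and the hypothesis $\mathcal{F}_1(X)\subseteq\NB(\mathcal{F}_1(X))$ must all be combined.
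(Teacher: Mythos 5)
Your reduction to showing that any two classes $\kappa_{X\setminus A}(x)$ and $\kappa_{X\setminus A}(y)$ coincide is correct, and your target contradiction --- elements of $\NB(\mathcal{F}_1(X))$ accumulating at $X$, against compactness of $\NB(\mathcal{F}_1(X))$ and $X\notin\NB(\mathcal{F}_1(X))$ --- is indeed the engine of the paper's argument. But your proposal stalls exactly where you admit it does, and the missing idea is a role swap in Theorem~\ref{treokd8g} that makes the whole proof a few lines long. Do not try to re-run the internal machinery of that theorem with $A$ as the blocker (where, as you correctly observe, all classes are dense and the separation hypotheses of Lemmas~\ref{elinutil} and~\ref{lemyfhdu87} fail globally). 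Instead, apply Theorem~\ref{treokd8g} \emph{as a black box} to new blockers manufactured inside the class of $x$: since $\kappa_{X\setminus A}(x)$ is dense, there is an increasing sequence of subcontinua $(L_n)_{n\in\mathbb{N}}$ of $\kappa_{X\setminus A}(x)$ with $\lim_{n\to\infty}L_n=X$ (\cite[Proposition~2.2]{Escobedo-Villanueva}). Each $L_n$ is a proper subcontinuum disjoint from $\kappa_{X\setminus A}(y)$, so every continuum through $y$ missing $A$ also misses $L_n$; hence $\kappa_{X\setminus A}(y)\subseteq\kappa_{X\setminus L_n}(y)$ and $\kappa_{X\setminus L_n}(y)$ is dense. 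Theorem~\ref{treokd8g}, with $L_n$ in the role of the continuum $A$ of that statement and $y$ as the witness $w$, then yields $K_n\in\NB(\mathcal{F}_1(X))$ with $L_n\subseteq K_n$, and $L_n\to X$ forces $K_n\to X$ --- the desired contradiction.

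The gap in your sketch is genuine, not merely an omitted routine verification. First, Lemma~\ref{lemyfhdu87} is formulated relative to a proper \emph{subcontinuum}, whereas here $A\in\NB(\mathcal{F}_1(X))$ need not be connected and $N(A,1/n)$ is open, so your localized hypotheses are off-type without substantial extra construction. Second, when you use the property of Kelley to transport large continua of $\kappa_{X\setminus A}(x)$ onto continua issuing from points of $\kappa_{X\setminus A}(y)$, those continua eventually avoid $A$ and therefore all lie in the \emph{single} class $\kappa_{X\setminus A}(y)$; a transversal $D$ would then put more than one point in one class, violating condition \textit{3} of Lemma~\ref{lemyfhdu87}, and pairwise disjointness of the transported continua is not forced either. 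The separated-class structure your plan needs simply does not exist relative to $A$ --- it exists relative to the continua $L_n$, which is precisely what the reduction above exploits. (Your Baire-category disposal of the countably-many-classes case is correct but unnecessary, since the paper's argument handles all cases uniformly.) As written, the proposal is an honest plan whose decisive step is acknowledged but not supplied.
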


\begin{proof}
Let $x\in X\setminus A$. We prove that $X\setminus A=\kappa_{X\setminus A}(x)$. Suppose that there exists $y\in X\setminus A$ such that $\kappa_{X\setminus A}(y)\cap \kappa_{X\setminus A}(x)=\emptyset$. Note that $\kappa_{X\setminus A}(y)$ is dense, because $A\in\NB(\mathcal{F}_1(X))$. Since $\kappa_{X\setminus A}(x)$ is dense, there exists a sequence of subcontinua $(L_n)_{n\in\mathbb{N}}$ of $\kappa_{X\setminus A}(x)$ such that $L_n\subseteq L_{n+1}$ for each $n\in\mathbb{N}$, and $\lim_{n\to\infty}L_n=X$ \cite[Proposition~2.2]{Escobedo-Villanueva}. Since $\kappa_{X\setminus A}(y)\cap L_n=\emptyset$ and $\kappa_{X\setminus A}(y)$ is dense, we have that $\kappa_{X\setminus A}(y)\subseteq \kappa_{X\setminus L_n}(y)$; i.e., $\kappa_{X\setminus L_n}(y)$ is dense for each $n\in\mathbb{N}$. Hence, there exists  a continuum $K_n\in\NB(\mathcal{F}_1(X))$ such that $L_n\subseteq K_n$ for each $n\in\mathbb{N}$, by Theorem~\ref{treokd8g}. Since $\lim_{n\to\infty}L_n=X$, $\lim_{n\to\infty}K_n=X$. Thus, $X\in\NB(\mathcal{F}_1(X))$, a contradiction. Therefore, $X\setminus A=\kappa_{X\setminus A}(x)$ and $X\setminus A$ is connected by continua.
\end{proof}

\begin{theorem}\label{componentesabiertas}
Let $X$ be a decomposable continuum with the property of Kelley such that $\mathcal{F}_1(X)\subseteq \NB(\mathcal{F}_1(X))$ and $\NB(\mathcal{F}_1(X))$ is a continuum. If $W$ is a proper subcontinuum of $X$, then each component of $X\setminus W$ is open.
\end{theorem}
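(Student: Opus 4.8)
The plan is to reduce the statement to a local interior‑producing claim and then exploit the property of Kelley. Precisely, it suffices to show that every $x\in X\setminus W$ lies in the interior of some subcontinuum of $X\setminus W$: if $x\in\Int(K)\subseteq K\subseteq X\setminus W$, then $K$ is connected and hence contained in the component $C$ of $x$ in $X\setminus W$, so $\Int(K)$ is an open neighborhood of $x$ inside $C$ and $C$ is open. Furthermore, by Theorem~\ref{theokelley} (applied with $A=W$) it is enough to produce, for each $x$, a subcontinuum $K_0\subseteq X\setminus W$ with $x\in K_0$ and $\Int(K_0)\neq\emptyset$: the theorem fattens it to $K_0\subseteq\Int(L)\subseteq L\subseteq X\setminus W$, whence $x\in\Int(L)\subseteq C$. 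Thus everything reduces to the claim that each $x\in X\setminus W$ belongs to a subcontinuum of $X\setminus W$ with nonempty interior. I would first record the boundary‑bumping fact that $\Cl(C)$ is a subcontinuum with $\Cl(C)\setminus C\subseteq W$ and $\Cl(C)\cap W\neq\emptyset$, which lets me move freely between $C$ and $\Cl(C)$.

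First I would dispose of the case in which $W$ does \emph{not} block $x$, i.e.\ $\kappa_{X\setminus W}(x)$ is dense in $X$. Here Theorem~\ref{treokd8g}, applied with $A=W$ and $w=x$, gives $L\in\NB(\mathcal{F}_1(X))$ with $W\subseteq L$; and by the explicit description of $L$ in that proof, $x\notin L$ precisely because $\Cl(\kappa_{X\setminus W}(x))=X$ and $x\notin W$. Since $L$ is closed, $X\setminus L$ is open, contains $x$, is contained in $X\setminus W$, and is connected by continua by Theorem~\ref{tr645463}; hence $X\setminus L$ is an open connected neighborhood of $x$ inside $X\setminus W$, so $x\in\Int(C)$ directly, without even invoking the fattening step.

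The main work, and the hard part, is the complementary case in which $W$ blocks $x$, so that $\kappa_{X\setminus W}(x)$ is not dense; this is exactly the regime of the circle of pseudo‑arcs, where a proper subcontinuum can have nonempty interior and so no nonblocker contains $W$, making the previous argument unavailable. Here I would argue by contradiction, assuming $x\notin\Int(C)$ and choosing $x_n\to x$ with $x_n\notin C$. As points near $x$ lie in the open set $X\setminus W$, each $x_n$ lies in a component $C_n$ of $X\setminus W$; no single component can contain infinitely many $x_n$, since otherwise $x$ would lie in its closure and hence (as $x\notin W$) in that component, contradicting $x\in C$. Passing to a subsequence, the $C_n$ are pairwise distinct, hence disjoint, and using Lemma~\ref{elinutil} one checks that $\Cl(\kappa_{X\setminus W}(x_n))\cap\kappa_{X\setminus W}(x_m)=\emptyset$ for $n\neq m$. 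Fixing a nondegenerate subcontinuum $K_0\subseteq X\setminus W$ through $x$ from the increasing exhaustion of $\kappa_{X\setminus W}(x)$ given by Lemma~\ref{lemma0}, and transporting it by the property of Kelley, I obtain continua $K_0^n\ni x_n$ with $K_0^n\to K_0$; since $K_0$ lies in the open set $X\setminus W$, we have $K_0^n\subseteq C_n$ for large $n$, so the $K_0^n$ are pairwise disjoint. Lemma~\ref{lempre1} then yields a countable set $D$ meeting each $K_0^n$ in one point with $K_0\subseteq\Cl(D)$, and—crucially using $\mathcal{F}_1(X)\subseteq\NB(\mathcal{F}_1(X))$ together with the fact that $X\setminus W$ is not connected by continua—Lemma~\ref{lemyfhdu87} shows that every finite subset of $D$ is a nonblocker. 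Limits of these finite nonblockers in the compact space $\NB(\mathcal{F}_1(X))$ produce nonblockers approximating $\Cl(D)\supseteq K_0$.

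The delicate point I expect to be the genuine obstacle is organizing this limiting process so that it actually contradicts the compactness of $\NB(\mathcal{F}_1(X))$ and the fact that $X\notin\NB(\mathcal{F}_1(X))$. Unlike in Theorem~\ref{treokd8g}, the transported continua are trapped inside $C$ and cannot be spread across $W$, so the naive argument only manufactures a nonblocker containing $\Cl(\kappa_{X\setminus W}(x))$ rather than forcing $X$ to be a limit of nonblockers. I anticipate the resolution requires replacing $W$ by the proper subcontinuum $\Cl(C)$ (or by $\Cl(\kappa_{X\setminus W}(x))$) and re‑running the density/exhaustion mechanism of Theorem~\ref{treokd8g} relative to it, so that the accumulation of the pairwise‑disjoint continua $K_0^n$ coming from the distinct components $C_n$ upgrades to an exhaustion yielding a nonblocker that is impossible when $\NB(\mathcal{F}_1(X))$ is a continuum. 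Checking that this upgrade goes through, and separately ruling out the degenerate possibility that $C$ (equivalently $\kappa_{X\setminus W}(x)$) is a single point, are the two places where I expect the real difficulty to concentrate.
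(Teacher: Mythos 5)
Your reduction to ``every $x\in X\setminus W$ is interior to a subcontinuum of $X\setminus W$'' is sound, and your first case is correct: when $\kappa_{X\setminus W}(x)$ is dense, pulling the explicit continuum $L=W\cup\{y\in X\setminus W : \Cl(\kappa_{X\setminus W}(y))\neq X\}$ out of the proof of Theorem~\ref{treokd8g} and noting that $X\setminus L$ is an open, connected-by-continua (Theorem~\ref{tr645463}) neighborhood of $x$ inside $X\setminus W$ does give $x\in\Int(C)$; this is a legitimate shortcut the paper does not use. Two side remarks: the degenerate possibility you flag never occurs, since by boundary bumping the components $K_n$ of $X\setminus N(W,1/n)$ from Lemma~\ref{lemma0} meet the boundary of $N(W,1/n)$, so $\kappa_{X\setminus W}(x)$ is always nondegenerate and $\Cl(\kappa_{X\setminus W}(x))\cap W\neq\emptyset$; and your appeal to Lemma~\ref{elinutil} for $\Cl(\kappa_{X\setminus W}(x_n))\cap\kappa_{X\setminus W}(x_m)=\emptyset$ misreads that lemma (its conclusion is a containment, not disjointness), though this is harmless since disjointness of the $K_0^n$ already follows from disjointness of the components $C_n$.

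The genuine gap is in the main case, exactly where you stop. The limiting process you set up can only certify nonblockers containing $K_0$, which, as you yourself observe, contradicts nothing; and your proposed repair is incoherent as stated, because $\Cl(C)$ and $\Cl(\kappa_{X\setminus W}(x))$ both contain $x$, so there is no blocking relative to their complements to run, and no density hypothesis analogous to that of Theorem~\ref{treokd8g} is available. The paper's resolution is different in kind: rather than trying to force $X$ into $\NB(\mathcal{F}_1(X))$, it manufactures a single nonblocker that visibly blocks $p$. Concretely, the Kelley transport is applied not to one fixed $K_0$ but to a continuum in $X\setminus W$ joining $p$ to an \emph{arbitrary} $q\in\kappa_{X\setminus W}(p)$, yielding Claim~\ref{claim0}: $\Cl(\kappa_{X\setminus W}(p))\subseteq\Cl(X\setminus C)$. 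Hence every finite subset $F$ of $\kappa_{X\setminus W}(p)$ is a limit of finite sets lying in $X\setminus(C\cup W)$ chosen with at most one point per component of $X\setminus W$; those are nonblockers by Lemma~\ref{lemyfhdu87}, so $F\in\NB(\mathcal{F}_1(X))$ by compactness of $\NB(\mathcal{F}_1(X))$, and since finite subsets of $\kappa_{X\setminus W}(p)$ are dense in $2^{\Cl(\kappa_{X\setminus W}(p))}$, a second compactness argument gives $2^{\Cl(\kappa_{X\setminus W}(p))}\subseteq\NB(\mathcal{F}_1(X))$. Now $Q=W\cap\Cl(\kappa_{X\setminus W}(p))$ (nonempty by the boundary-bumping remark above) is a nonblocker with $p\notin Q$, yet $\kappa_{X\setminus Q}(p)\subseteq\kappa_{X\setminus W}(p)\subseteq C$ cannot be dense: if it were, then $Q=W\in\NB(\mathcal{F}_1(X))$, Theorem~\ref{tr645463} would make $X\setminus W$ connected, and $p\in\Cl(X\setminus C)=W$ would contradict $p\in C$. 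This single set $Q$ is the missing idea in your proposal; note also that the paper's argument is uniform in $p$, so the case split on density of $\kappa_{X\setminus W}(x)$, whose first half you handled correctly, is not needed at all.
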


\begin{proof}
Suppose to the contrary that there exist a component $C$ of $X\setminus W$ and $p\in C$ such that $p\in \Cl(X\setminus C)$.

\begin{claim}\label{claim0}
$\Cl(\kappa_{X\setminus W}(p))\subseteq \Cl(X\setminus C)$.
\end{claim}
To prove the Claim~\ref{claim0}, we need to show that $\kappa_{X\setminus W}(p)\subseteq \Cl(X\setminus C)$.
Let $q\in \kappa_{X\setminus W}(p)$. Hence, there exists a subcontinuum $P$ of $X\setminus W$ such that $\{p,q\}\subseteq P$. 
Let $(p_n)_{n\in\mathbb{N}}$ be a sequence of $X\setminus C$ such that $\lim_{n\to\infty}p_n=p$. Since $X$ is compact, we may suppose that $(p_n)_{n\in\N}$ is a sequence in $X\setminus (W\cup C)$. Since $X$ has the property of Kelley, there exists a sequence of subcontinua $(P_n)_{n\in \mathbb{N}}$ of $X$ such that $p_n\in P_n$ for all $n$, and $\lim_{n\to\infty}P_n =P$. 
Notice that $\langle X\setminus W\rangle$ is an open subset of $2^X$, and $P\in \langle X\setminus W\rangle$. Hence, there exists $k\in\N$ such that $P_n\in \langle X\setminus W\rangle$ for each $n\geq k$. Since $p_n$ belongs to a different component to $C$ in $X\setminus W$, and $P_n\cap W=\emptyset$ for each $n\geq k$, we have that $P_n\cap C=\emptyset$ for all $n\geq k$. Hence, $P_n\subseteq X\setminus (W\cup C)$ for each $n\geq k$. Thus, $P\subseteq \Cl(X\setminus (W\cup C))\subseteq \Cl(X\setminus C)$. Therefore, $q\in \Cl(X\setminus C)$ and $\kappa_{X\setminus W}(p)\subseteq \Cl(X\setminus C)$. We have proved the Claim~\ref{claim0}.

\medskip

We prove that $2^{\Cl(\kappa_{X\setminus W}(p))}\subseteq \mathcal{NB}(\mathcal{F}_1(X))$. Let $F=\{w_1,\ldots,w_m\}$ be a finite subset of $\kappa_{X\setminus W}(p)$. By Claim~\ref{claim0}, for each $j\in\{1,\ldots,m\}$, there exists a sequence $(z^{j}_n)_{n\in\mathbb{N}}$ in $X\setminus C$ such that $\lim_{n\to\infty}z^{j}_n=w_j$. Since $W$ is compact, we may suppose that $(z^{j}_n)_{n\in\N}$ is a sequence in $X\setminus (C\cup W)$ for each $j\in\{1,\ldots,m\}$. Furthermore, observe that if $D$ is a component of $X\setminus W$ and $D\neq C$, then $\{n\in\N : z^{j}_n\in D\}$ is a finite set for each $j\in\{1,\ldots,m\}$. Hence, if necessary, we can take a subsequence $(z^{j}_{n_k})_{k\in\N}$ of $(z^{j}_n)_{n\in\N}$ for each $j\in\{1,\ldots,m\}$, such that $|\{z_{n_k}^{j} : k\in\mathbb{N}, j\in\{1,\ldots,m\}\}\cap D|\leq 1$ for each component $D$ of $X\setminus W$. Let $F_i=\{z^{1}_{n_i},\ldots,z^{m}_{n_i}\}$ for each $i\in\N$. Since $\mathcal{F}_1(X)\subseteq \NB(\mathcal{F}_1(X))$, $F_i\in \NB(\mathcal{F}_1(X))$ for each $i\in\N$, by Lemma~\ref{lemyfhdu87}. It is not difficult to see that $\lim_{i\to\infty}F_i=F$. Since $\NB(\mathcal{F}_1(X))$ is compact, $F\in \NB(\mathcal{F}_1(X))$. We have that $F\in \NB(\mathcal{F}_1(X))$ for every finite subset $F$ of $\kappa_{X\setminus W}(p)$. It is easy to see that the collection of all finite subsets of $\kappa_{X\setminus W}(p)$ is dense of $2^{\Cl(\kappa_{X\setminus W}(p))}$.
Hence, since $\NB(\mathcal{F}_1(X))$ is compact, $2^{\Cl(\kappa_{X\setminus W}(p))}\subseteq \NB(\mathcal{F}_1(X))$.

\medskip

Let $Q=W\cap \Cl(\kappa_{X\setminus W}(p))$. Since $Q\in 2^{\Cl(\kappa_{X\setminus W}(p))}$, $Q\in \NB(\mathcal{F}_1(X))$. Note that $\kappa_{X\setminus W}(p)\subseteq C$ and $C$ is a component of $X\setminus W$. Hence, $\Cl(\kappa_{X\setminus W}(p))\setminus \kappa_{X\setminus W}(p)\subseteq W$. Thus, $p\notin Q$ and $\kappa_{X\setminus Q}(p)\subseteq \kappa_{X\setminus W}(p)\subseteq C$. This clearly is a contradiction, because $\kappa_{X\setminus Q}(p)$ has to be dense and $C$ is not dense. Therefore, each component of $X\setminus W$ is open.
\end{proof}

\begin{theorem}\label{7uhbcs4}
Let $X$ be a decomposable continuum with the property of Kelley such that $\mathcal{F}_1(X)\subseteq \NB(\mathcal{F}_1(X))$. If $\NB (\mathcal{F}_1(X))$ is a continuum, then $X$ is aposyndetic.
\end{theorem}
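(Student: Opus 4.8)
The plan is to verify Definition~\ref{defaposyndetic} directly: given distinct points $p,q\in X$, I will produce a subcontinuum $L$ with $p\in\Int(L)\subseteq L\subseteq X\setminus\{q\}$, which shows $X$ is aposyndetic at $p$ with respect to $q$. Since $p$ and $q$ are arbitrary, this yields aposyndesis at every point with respect to every other point, hence that $X$ is aposyndetic. So I fix $p\neq q$ and work with this single ordered pair.

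Since $\mathcal{F}_1(X)\subseteq\NB(\mathcal{F}_1(X))$, we have $\{q\}\in\NB(\mathcal{F}_1(X))$, and $\NB(\mathcal{F}_1(X))$ is compact because it is a continuum. Hence Theorem~\ref{tr645463} applies to $A=\{q\}$ and gives that $X\setminus\{q\}$ is connected by continua; in particular $\kappa_{X\setminus\{q\}}(p)=X\setminus\{q\}$. By Lemma~\ref{lemma0} (with $A=\{q\}$ and $B=\{p\}$), and inspecting its construction, there is a sequence of subcontinua $(K_n)_{n\in\mathbb{N}}$ of $X$, each the component of $X\setminus N(q,1/n)$ containing $p$, so that each $K_n$ contains $p$, is contained in $X\setminus\{q\}$, and $\bigcup_{n\in\mathbb{N}}K_n=\kappa_{X\setminus\{q\}}(p)=X\setminus\{q\}$. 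Consequently $X=\{q\}\cup\bigcup_{n\in\mathbb{N}}K_n$ is a countable union of closed sets.

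The central step is to promote one of these continua to one with nonempty interior. Since $X$ is decomposable it is nondegenerate, so $\{q\}$ is nowhere dense; as $X$ is a compact metric space it is a Baire space, so the representation $X=\{q\}\cup\bigcup_{n}K_n$ forces some $K_{n_0}$ to be somewhere dense, and, being closed, to satisfy $\Int(K_{n_0})\neq\emptyset$. Now $K_{n_0}$ is a subcontinuum of $X\setminus\{q\}$ with $p\in K_{n_0}$ and $\Int(K_{n_0})\neq\emptyset$, so Theorem~\ref{theokelley} (with $A=\{q\}$) yields a subcontinuum $L$ with $K_{n_0}\subseteq\Int(L)\subseteq L\subseteq X\setminus\{q\}$. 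Then $p\in K_{n_0}\subseteq\Int(L)$, giving $p\in\Int(L)\subseteq L\subseteq X\setminus\{q\}$, as required.

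I expect the main obstacle to be exactly this Baire-category step, and more precisely the justification that $\bigcup_n K_n$ exhausts $X\setminus\{q\}$ rather than being merely dense: a dense countable union of closed nowhere-dense sets would be useless for a category argument, so the strengthening from ``$\kappa_{X\setminus\{q\}}(p)$ is dense'' to ``$\kappa_{X\setminus\{q\}}(p)=X\setminus\{q\}$'', provided by Theorem~\ref{tr645463}, is precisely what makes the argument go through. Everything else---the reduction to a single ordered pair, the representation of $\kappa_{X\setminus\{q\}}(p)$ as a countable union of subcontinua, and the final enlargement via the property of Kelley---is routine.
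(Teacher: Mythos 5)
Your proof is correct and follows essentially the same route as the paper's: Theorem~\ref{tr645463} to upgrade density to $\kappa_{X\setminus\{q\}}(p)=X\setminus\{q\}$, Lemma~\ref{lemma0} plus the Baire category theorem to find a subcontinuum of $X\setminus\{q\}$ with nonempty interior, and Theorem~\ref{theokelley} to enlarge it. The only cosmetic difference is that you observe from the construction in Lemma~\ref{lemma0} that each $K_n$ already contains $p$, whereas the paper, using only the lemma's statement, joins $p$ to the set $L_k$ with interior by a continuum $R$ (via connectedness by continua) and applies the Kelley property to $L_k\cup R$.
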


\begin{proof}
Let $p\in X$. We show that $X$ is aposyndetic at $p$ with respect to each point of $X\setminus\{p\}$ (see Definition~\ref{defaposyndetic}). Let $q\in X\setminus \{p\}$. Since $\{q\}\in\NB(\mathcal{F}_1(X))$, $X\setminus\{q\}$ is connected by continua, by Theorem~\ref{tr645463}. Furthermore, there exists a sequence of subcontinua $(L_n)_{n\in\mathbb{N}}$ of $X\setminus \{q\}$ such that $X\setminus \{q\}=\bigcup_{n\in\mathbb{N}}L_n$, by Lemma~\ref{lemma0}. Since $X$ is a Baire space, there exists $k\in\mathbb{N}$ such that $\Int(L_k)\neq\emptyset$ \cite[Corollary~25.4]{Willard}. Since 
$X\setminus\{q\}$ is connected by continua, there exists a subcontinuum $R$ of $X\setminus\{q\}$ such that $p\in R$ and $R\cap L_k\neq\emptyset$. Since $X$ has the property of Kelley, there exists a continuum $W$ such that $L_k\cup R\subseteq\Int(W)\subseteq W\subseteq X\setminus\{q\}$, by Theorem~\ref{theokelley}. Thus, $p\in \Int(W)$ and $q\notin W$. Therefore,  $X$ is aposyndetic at $p$ with respect to $q$ and hence, $X$ is aposyndetic. 
\end{proof}

\section{The simple closed curve}

In this section, we present our main results. We show two characterizations in theorems \ref{f43eoa} and \ref{theo1q2q3q} of the simple closed curve.

\begin{theorem}\label{f43eoa}
Let $X$ be a decomposable continuum with the property of Kelley such that $\mathcal{F}_1(X)\subseteq \NB(\mathcal{F}_1(X))$. Then, $\NB(\mathcal{F}_1(X))$ is a continuum if and only if $X$ is a simple closed curve.
\end{theorem}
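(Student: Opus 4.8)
The plan is to route both implications through Theorem~\ref{locallyconnected}, so that the entire problem reduces to showing that, under the standing hypotheses, $X$ is locally connected. The backward implication is then immediate: a simple closed curve is locally connected and satisfies the standing hypotheses (it is decomposable, has the property of Kelley, and every singleton fails to block, since $\kappa_{X\setminus\{x\}}(y)=X\setminus\{x\}$ is dense, so that $\mathcal{F}_1(X)\subseteq\NB(\mathcal{F}_1(X))$); hence Theorem~\ref{locallyconnected} gives that $\NB(\mathcal{F}_1(X))$ is a continuum. For the forward implication I would assume $\NB(\mathcal{F}_1(X))$ is a continuum, prove that $X$ is locally connected, and then apply Theorem~\ref{locallyconnected} once more to conclude that $X$ is a simple closed curve.

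The heart of the matter is the forward direction's claim that $X$ is locally connected. First I would invoke Theorem~\ref{7uhbcs4} to obtain that $X$ is aposyndetic. Now fix $p\in X$ and an open set $V$ with $p\in V\neq X$ (the case $V=X$ being trivial); the goal is a connected open neighborhood of $p$ contained in $V$. Using aposyndesis at each $q\in X\setminus V$ with respect to $p$ (Definition~\ref{defaposyndetic}), I choose for each such $q$ a subcontinuum $M_q$ with $q\in\Int(M_q)\subseteq M_q\subseteq X\setminus\{p\}$; by compactness of $X\setminus V$, finitely many of the sets $\Int(M_{q_1}),\dots,\Int(M_{q_n})$ cover $X\setminus V$. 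Since $\{p\}\in\mathcal{F}_1(X)\subseteq\NB(\mathcal{F}_1(X))$, Theorem~\ref{tr645463} tells me that $X\setminus\{p\}$ is connected by continua, so I may pick a point in each $M_{q_j}$ and join them by finitely many subcontinua lying in $X\setminus\{p\}$. The union $M^{*}$ of the $M_{q_j}$ together with these connecting continua is then a \emph{proper} subcontinuum of $X$ (it misses $p$) that satisfies $X\setminus V\subseteq M^{*}$. Finally, since $M^{*}$ is a proper subcontinuum, Theorem~\ref{componentesabiertas} guarantees that every component of $X\setminus M^{*}$ is open; the component $C$ of $p$ in $X\setminus M^{*}$ is therefore a connected open set with $p\in C\subseteq X\setminus M^{*}\subseteq V$. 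As $p$ and $V$ are arbitrary, $X$ is locally connected.

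I expect the main obstacle to be precisely this gluing step. Aposyndesis only produces, one point at a time, subcontinuum neighborhoods avoiding $p$, and a finite intersection of such neighborhoods of the points of $X\setminus V$ need not be connected, so it cannot be fed directly into Theorem~\ref{componentesabiertas}. The idea that unlocks the argument is to use the \emph{dual} family of witnesses, namely subcontinua $M_q$ that \emph{contain} the points of $X\setminus V$ in their interiors and avoid $p$, and then to amalgamate them into a single subcontinuum of $X\setminus\{p\}$; this amalgamation is available exactly because $\{p\}$ is a nonblocker, which by Theorem~\ref{tr645463} makes $X\setminus\{p\}$ connected by continua. Once $M^{*}$ is a genuine proper subcontinuum, Theorem~\ref{componentesabiertas} supplies the open connected neighborhood, and Theorem~\ref{locallyconnected} converts local connectedness into the desired conclusion.
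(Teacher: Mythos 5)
Your proposal is correct and follows essentially the same route as the paper: Theorem~\ref{7uhbcs4} gives aposyndesis, Theorem~\ref{tr645463} (applied to $\{p\}$) supplies the continua gluing pieces of the complement of a neighborhood into a single proper subcontinuum missing $p$, Theorem~\ref{componentesabiertas} makes the component of $p$ in its complement open, and Theorem~\ref{locallyconnected} finishes. The only (harmless) variation is that you build the covering continuum $M^{*}$ directly from aposyndesis at the points of $X\setminus V$ plus compactness, whereas the paper first cites that aposyndetic continua are semi-locally connected and then connects the finitely many components of $X\setminus U$; these amount to the same construction.
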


\begin{proof}
Suppose that $\NB(\mathcal{F}_1(X))$ is a continuum. Then, by Theorem~\ref{7uhbcs4}, $X$ is aposyndetic. Hence, $X$ is semi-locally connected, by \cite[Theorem 1.7.17]{Macias-Libro}. We prove that $X$ is locally connected. Let $p\in X$ and let $V$ be an open subset of $X$ such that $p\in V$. Since $X$ is semi-locally connected, there exists an open subset $U$ of $X$ such that $p\in U\subseteq V$ and $X\setminus U$ has a finite number of components. Let $L_1, L_2,\ldots ,L_k$ be the components of $X\setminus U$. Since $\{p\}\in\NB(\mathcal{F}_1(X))$, $X\setminus \{p\}$ is connected by continua, by Theorem \ref{tr645463}. Thus, for each $i\in\{2,\ldots,k\}$, there exists a subcontinuum $M_i$ of $X\setminus\{p\}$ such that $M_i\cap L_1\neq\emptyset$ and $M_i\cap L_i\neq\emptyset$. 

Let $$W=(L_1\cup L_2\cup \cdots\cup L_k)\cup (M_2\cup M_3\cup\cdots\cup M_k).$$ Observe that $W$ is a continuum and $p\notin W$. Let $J$ be the component of $X\setminus W$ such that $p\in J$. By Theorem~\ref{componentesabiertas}, $J$ is open. Furthermore, $J\subseteq X\setminus W\subseteq U\subseteq V$. Thus, $X$ is locally connected at $p$. Therefore, $X$ is a simple closed curve, by Theorem~\ref{locallyconnected}.

\medskip

Conversely, $\NB(\mathcal{F}_1(S^1))=\mathcal{F}_1(S^1)$. Therefore, $\NB(\mathcal{F}_1(S^1))$ is a continuum.
\end{proof}

\begin{theorem}\label{f43yheoa}
Let $X$ be a decomposable continuum with the property of Kelley. If\break $\NB(\mathcal{F}_1(X))$ is a continuum, then there exists a map $f\colon X\to S^1$ such that $f$ is open, monotone and $f^{-1}(z)$ is a terminal subcontinuum of $X$ for every $z\in S^1$.
\end{theorem}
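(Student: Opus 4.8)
The plan is to apply the structure theory developed earlier to the continuous decomposition $\mathcal{N}=\{\pi(x):x\in X\}$ and then use the quotient space $\mathcal{N}$ as the target of the desired map. By Theorem~\ref{theopartition}, $\mathcal{N}$ is a continuous decomposition of $X$, and the quotient map $q\colon X\to\mathcal{N}$ given by $q(x)=\pi(x)$ is open and monotone (as noted after Theorem~\ref{theopartition}). Moreover, by Proposition~\ref{khig788tj}~(\textit{2}), every $\pi(x)\in\mathcal{N}$ is a terminal subcontinuum of $X$, so $q^{-1}(\pi(x))=\pi(x)$ is terminal for each point of $\mathcal{N}$. Thus $q$ already has all the required properties except that its codomain is $\mathcal{N}$ rather than $S^1$. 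The whole problem therefore reduces to showing that $\mathcal{N}$ is homeomorphic to $S^1$, after which I would set $f=h\circ q$ for a homeomorphism $h\colon\mathcal{N}\to S^1$; composing with a homeomorphism preserves openness, monotonicity, and the terminality of fibers.

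To identify $\mathcal{N}$ with $S^1$, I would invoke Theorem~\ref{lafhsk}, which tells us that $\NB(\mathcal{F}_1(X))$ is homeomorphic to $\NB(\mathcal{F}_1(\mathcal{N}))$ and, crucially, that $\mathcal{F}_1(\mathcal{N})\subseteq\NB(\mathcal{F}_1(\mathcal{N}))$. Since $\NB(\mathcal{F}_1(X))$ is a continuum by hypothesis, so is $\NB(\mathcal{F}_1(\mathcal{N}))$. The next step is to check that the quotient continuum $\mathcal{N}$ is itself a decomposable continuum with the property of Kelley: decomposability and the property of Kelley are preserved under monotone open maps of the kind we have here, and $\mathcal{N}$ inherits these from $X$. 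With these facts in hand, $\mathcal{N}$ satisfies exactly the hypotheses of Theorem~\ref{f43eoa} (decomposable, property of Kelley, $\mathcal{F}_1(\mathcal{N})\subseteq\NB(\mathcal{F}_1(\mathcal{N}))$, and $\NB(\mathcal{F}_1(\mathcal{N}))$ a continuum), so Theorem~\ref{f43eoa} yields that $\mathcal{N}$ is a simple closed curve.

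I expect the main obstacle to be the verification that the quotient $\mathcal{N}$ genuinely inherits the property of Kelley and decomposability, and more delicately that it satisfies the remaining structural hypotheses needed to enter Theorem~\ref{f43eoa}. The property of Kelley is not automatically preserved under arbitrary quotients, but the decomposition here is \emph{continuous} and the quotient map $q$ is open and monotone, which is precisely the setting in which convergence of subcontinua can be transported back and forth between $X$ and $\mathcal{N}$; the argument should mirror the Kelley-property manipulations used repeatedly in Section~3. Decomposability of $\mathcal{N}$ follows because a monotone image of a decomposable continuum is decomposable (and $\mathcal{N}$ is nondegenerate since otherwise $X=\pi(x)$ would be a single terminal element, forcing $X$ itself into $\NB(\mathcal{F}_1(X))$ and trivializing the situation). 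Once $\mathcal{N}\cong S^1$ is established, the conclusion is immediate: taking $f=h\circ q$, the map $f\colon X\to S^1$ is open and monotone, and for each $z\in S^1$ the fiber $f^{-1}(z)=q^{-1}(h^{-1}(z))=\pi(x)$ for the appropriate $x$, which is a terminal subcontinuum of $X$ by Proposition~\ref{khig788tj}~(\textit{2}). This completes the proof.
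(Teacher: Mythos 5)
Your proposal is correct and takes essentially the same route as the paper's proof: pass to the continuous decomposition $\mathcal{N}=\{\pi(x):x\in X\}$ with its open monotone quotient map and terminal fibers (Proposition~\ref{khig788tj}, Theorem~\ref{theopartition}), transfer the hypotheses via Theorem~\ref{lafhsk}, apply Theorem~\ref{f43eoa} to conclude $\mathcal{N}$ is a simple closed curve, and compose the quotient with a homeomorphism onto $S^1$. The one step you flagged as the main obstacle---that $\mathcal{N}$ inherits the property of Kelley---is exactly where the paper simply cites Wardle's theorem that monotone maps preserve the property of Kelley, so your sketched transport argument can be replaced by that reference.
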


\begin{proof}
We know that $\mathcal{N}=\{\pi(x) : x\in X\}$ is a continuous decomposition of $X$ such that $\pi(x)$ is a terminal subcontinuum of $X$ for every $x\in X$, by Proposition~\ref{khig788tj} and Theorem~\ref{theopartition}. Let $\rho\colon X\to\mathcal{N}$ be the quotient map. Since $\rho$ is monotone, $\mathcal{N}$ has the property of Kelley, by \cite[Theorem~4.3]{Wardle}. It is not difficult to see that $\mathcal{N}$ is decomposable. Furthermore, $\NB(\mathcal{F}_1(\mathcal{N}))$ is a continuum and $\mathcal{F}_1(\mathcal{N})\subseteq \NB(\mathcal{F}_1(\mathcal{N}))$, by Theorem~\ref{lafhsk}. Thus, $\mathcal{N}$ is a simple closed curve, by Theorem~\ref{f43eoa}. Let $g\colon \mathcal{N}\to S^1$ be a homeomorphism. It is clear that $f=g\circ \rho$ is a map from $X$ to $S^1$ monotone, open and $f^{-1}(z)$ is terminal for each $z\in S^1$. 
\end{proof}

The following result give us a partial answer to Question~\ref{Question}.

\begin{theorem}\label{theo1q2q3q}
Let $X$ be an hereditarily decomposable continuum with the property of Kelley. Then, $\NB(\mathcal{F}_1(X))$ is a continuum if and only if $X$ is a simple closed curve.
\end{theorem}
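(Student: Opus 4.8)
The plan is to reduce everything to the already-established Theorem~\ref{f43eoa}. The converse is immediate, since $\NB(\mathcal{F}_1(S^1))=\mathcal{F}_1(S^1)$ is a continuum, so I concentrate on the forward direction: assuming $X$ is hereditarily decomposable, has the property of Kelley, and $\NB(\mathcal{F}_1(X))$ is a continuum, I want to conclude $X\cong S^1$. First I would apply Theorem~\ref{f43yheoa} (available because $X$ is in particular decomposable) to obtain a monotone open surjection $f\colon X\to S^1$ whose fibers are exactly the terminal subcontinua $\pi(x)$, with $\mathcal{N}=\{\pi(x):x\in X\}\cong S^1$. The whole problem then collapses to showing that every fiber $\pi(x)$ is degenerate: if so, $f$ is a continuous bijection from the compactum $X$ onto the Hausdorff space $S^1$, hence a homeomorphism. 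Equivalently, all fibers degenerate means $\pi(x)=\{x\}=B(x)$ for every $x$, i.e. $\mathcal{F}_1(X)\subseteq\NB(\mathcal{F}_1(X))$, after which Theorem~\ref{f43eoa} finishes directly.

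Next I would record that terminality makes the decomposition rigid: any subcontinuum $N$ of $X$ either lies inside a single fiber or is saturated, i.e. $N=f^{-1}(f(N))$ with $f(N)$ a subcontinuum of $S^1$. Indeed, if $N$ meets two distinct fibers then it is not contained in any one of them, so terminality forces it to contain every fiber it meets. Using this, aposyndeticity of $X$ is free away from fibers: given $p,q$ with $f(p)\ne f(q)$, choose a closed arc $J\subseteq S^1$ with $f(p)\in\Int(J)$ and $f(q)\notin J$; then $W=f^{-1}(J)$ is a saturated subcontinuum with $p\in f^{-1}(\Int(J))=\Int(W)\subseteq W\subseteq X\setminus\{q\}$, since $f$ is open. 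Thus the only possible obstruction to $X\cong S^1$ lives between two distinct points of a single nondegenerate fiber.

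The heart of the proof, and the step I expect to be the main obstacle, is therefore to rule out a nondegenerate fiber $Z=\pi(x_0)$. This is exactly where hereditary decomposability is essential rather than mere decomposability of $X$: the circle of pseudoarcs realizes the very same picture (a continuous decomposition into terminal fibers with quotient $S^1$ and $\NB$ a continuum), but its fibers are hereditarily indecomposable pseudoarcs, so the distinguishing hypothesis must be precisely that the fibers are decomposable. Since $X$ is hereditarily decomposable, $Z$ is decomposable, say $Z=P\cup Q$ with $P,Q$ proper subcontinua; I would pick $p\in Z\setminus Q\subseteq P$ (a point relatively interior to $P$ in $Z$) and $r\in Z\setminus P$. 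Taking $z_n\to z_0$ in $S^1$ with $z_n\ne z_0$, the fibers $Z_n=f^{-1}(z_n)$ converge to $Z$ by continuity of the decomposition, and by lower semicontinuity one may choose $p_n\in Z_n$ with $p_n\to p$. Applying the property of Kelley to the subcontinuum $P\ni p$ yields subcontinua $P_n\ni p_n$ with $P_n\to P$; since each $Z_n$ is terminal and meets $P_n$ at $p_n$, either $Z_n\subseteq P_n$ or $P_n\subseteq Z_n$. In the first case, if it occurs infinitely often, then $Z=\lim Z_n\subseteq\lim P_n=P$, contradicting $r\in Z\setminus P$.

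The remaining possibility, $P_n\subseteq Z_n$ eventually, is the genuine difficulty, and resolving it is the crux of the theorem; informally it is the assertion that a nondegenerate terminal fiber of a continuum with the property of Kelley cannot be decomposable. I would attempt to exclude it by exploiting that $p$ is relatively interior to the proper piece $P$, together with the openness of $f$ and the symmetric construction performed simultaneously around $r\in Z\setminus P$, so as to manufacture a subcontinuum that is forced to meet two distinct fibers yet is trapped inside a single fiber, contradicting the rigidity recorded above. Once this last exclusion is in place, every fiber is degenerate, and therefore $X\cong\mathcal{N}\cong S^1$.
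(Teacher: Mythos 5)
Your opening reduction coincides exactly with the paper's: both arguments invoke Theorem~\ref{f43yheoa} to obtain the open monotone map $f\colon X\to S^1$ with terminal fibers, and both reduce the theorem to showing every fiber is degenerate (after which $f$ is a continuous bijection from a compactum, hence a homeomorphism). Your first case is also handled correctly: if $Z_n\subseteq P_n$ infinitely often, then $Z=\lim Z_n\subseteq\lim P_n=P$, contradicting $r\in Z\setminus P$. But the proposal has a genuine gap precisely where you flag it, in the case $P_n\subseteq Z_n$ eventually (which in the limit only gives $P\subseteq Z$, i.e.\ nothing). You offer no construction there, only the intention to ``manufacture a subcontinuum trapped inside a single fiber,'' and the informal lemma you are aiming at --- that a nondegenerate terminal subcontinuum of a continuum with the property of Kelley cannot be decomposable --- is false as stated. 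The closed topologist's sine curve has the property of Kelley, and its limit segment $A$ is terminal (any subcontinuum meeting $A$ and containing a point of the graph must contain a whole tail of the graph, hence, being closed, all of $A$), yet $A$ is an arc, hence decomposable. So no purely local argument from the Kelley property plus terminality of the single fiber $Z$ --- which is all your sketch deploys near $Z$ --- can close the second case; whatever works must use global structure: that the fibers form a continuous decomposition over $S^1$, and hereditary decomposability of subcontinua of $X$ other than $Z$ itself.

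That global input is exactly what the paper's proof supplies, by a different mechanism than the one you attempt. For an arc $\alpha\subseteq S^1$, the paper shows via terminality of the fibers that $L=f^{-1}(\alpha)$ is an \emph{irreducible} continuum; then $g=f|_L\colon L\to\alpha$ is open and monotone with terminal fibers, and the Oversteegen--Tymchatyn theorem on hereditarily decomposable continua \cite{Oversteegen-Tymchatyn} yields that the points with degenerate $g$-fiber are dense in $L$; since $g$ is open, $\{g^{-1}(z) : z\in\alpha\}$ is a continuous decomposition, and density of degenerate fibers together with continuity forces \emph{every} fiber of $g$ to be degenerate. Note where hereditary decomposability enters there: not as decomposability of the bad fiber $Z$ (your use of it), but through a substantive external theorem about irreducible hereditarily decomposable continua --- and the sine-curve example above indicates this extra leverage is not dispensable. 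To repair your sketch you would need either to prove an analogue of that density statement or to exploit the irreducibility of $f^{-1}(\alpha)$ in some other essential way; as written, the crux of the theorem is missing.
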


\begin{proof}
We assume that $\NB(\mathcal{F}_1(X))$ is a continuum. By Theorem~\ref{f43yheoa}, there exists a map $f\colon X\to S^1$ such that $f$ is open, monotone and $f^{-1}(z)$ is a terminal subcontinuum of $X$ for each $z\in S^1$. We show that $|f^{-1}(z)|=1$ for every $z\in S^1$. Let $z_0\in S^1$ and let $\alpha$ be an arc in $S^1$ such that $z_0\in \alpha$. Let $L=f^{-1}(\alpha)$. Since $f$ is monotone, $L$ is a continuum. Let $p,q\in L$ be such that $\alpha$ is irreducible between $f(p)$ and $f(q)$. We see that $L$ is irreducible between $p$ and $q$. Let $K$ be a subcontinuum of $L$ such that $p,q\in K$. Hence, $f(K)=\alpha$. Since $f^{-1}(z)$ is terminal for each $z\in \alpha$ and $f^{-1}(z)\cap K\neq\emptyset$, we have that $f^{-1}(z)\subseteq K$ for each $z\in\alpha$. Thus, $K=f^{-1}(\alpha)$ and $K=L$. Therefore, $L$ is irreducible.

Let $g=f|_{L}\colon L\to \alpha$. Since $f$ is open, $g$ is open, by \cite[Theorem~4.31]{Whyburn}. Furthermore, $g$ monotone and $g^{-1}(z)$ is terminal for each $z\in\alpha$. Thus, $D=\{x\in L : g^{-1}(g(x))=x\}$ is a dense subset of $L$, by \cite[Corollary~9, p.130]{Oversteegen-Tymchatyn}. Since $g$ is open, $\{g^{-1}(z) : z\in\alpha\}$ is a continuous decomposition of $L$ (see \cite[Theorem~4.32, p.130]{Whyburn}). Thus, $g^{-1}(g(x))=x$ for each $x\in L$. In particular, $|f^{-1}(z_0)|=1$. Therefore, $f$ is injective, and $f$ is a homeomorphism.

\medskip

It is clear that $\NB(\mathcal{F}_1(S^1))$ is a continuum. Therefore, we complete the proof of our theorem.
\end{proof}

The first author thanks La Vicerrector\'ia de Investigaci\'on y Extensi\'on de la Universidad Industrial de Santander y su Programa de Movilidad for financial support.

Escuela de Matem\'aticas, Facultad de Ciencias, Universidad Industrial de Santander, Ciudad Universitaria, Carrera 27 Calle 9, Bucaramanga, Santander, A.A. 678, COLOMBIA.

\textit{E-mail address:} \textbf{jcamargo@saber.uis.edu.co}

\smallskip

Escuela de Matem\'aticas, Facultad de Ciencias, Universidad Industrial de Santander, Ciudad Universitaria, Carrera 27 Calle 9, Bucaramanga, Santander, A.A. 678, COLOMBIA.

\textit{E-mail address:} \textbf{mayraferreira.ortiz@gmail.com}

\end{document}